\numberwithin{equation}{section}
\newcommand{\Hg}{\mathcal{H}}
\DeclareMathOperator{\supp}{supp}
\renewcommand{\phi}{\varphi}
\newcommand{\rea}{{\rm Re}\,}
\newcommand{\ima}{{\rm Im}\,}
\theoremstyle{plain}
\newtheorem{theorem}{Theorem}[section]
\newtheorem{proposition}[theorem]{Proposition}
\newtheorem{lemma}[theorem]{Lemma}
\newtheorem{corollary}[theorem]{Corollary}
\newtheorem*{question}{Question}
\newtheorem{definition}[theorem]{Definition}
\newtheorem{example}[theorem]{Example }
\newtheorem{remark}[theorem]{Remark}
\begin{document}

\title[Systems of reproducing kernels and their biorthogonal]
{Systems of reproducing kernels and their biorthogonal:
completeness or incompleteness? }

\author [Anton Baranov]{Anton Baranov}
\address{ Department of Mathematics and Mechanics, Saint Petersburg
State Univerisity, 28, Universitetski pr., St. Petersburg, 198504,
Russia}
 \email{anton.d.baranov@gmail.com}
\author [Yurii Belov]{Yurii Belov }
\address{Department of Mathematical Sciences\\
Norwegian University of Science and Technology (NTNU)\\
 NO- 7491 Trondheim, Norway}
 \email{j\_b\_juri\_belov@mail.ru}

\thanks{This work was supported by
the Chebyshev Laboratory  
(Department of Mathematics and Mechanics, St. Petersburg State University)  
under RF government grant 11.G34.31.0026, by grant MK 7656.2010.1 
(Russia), and by the Research Council of Norway, grant 185359/V30.}

\begin{abstract}
Let $\{v_n\}$ be a complete minimal system in a Hilbert space
$\mathcal{H}$ and let $\{w_m\}$ be its biorthogonal system. It is
well known that $\{w_m\}$ is not necessarily complete. However the
situation may change if we consider systems of reproducing kernels
in a reproducing kernel Hilbert space $\mathcal{H}$ of analytic
functions. We study the completeness problem for a class of spaces
with a Riesz basis of reproducing kernels and for model subspaces
$K_\Theta$ of the Hardy space. We find  a class of spaces where
systems biorthogonal to complete systems of reproducing kernels are
always complete, and show that in general this is not true. In
particular we answer the question posed by N.K. Nikolski and
construct a model subspace  with an incomplete biorthogonal system.
\end{abstract}

\keywords{Reproducing kernel Hilbert spaces, de Branges spaces,
biorthogonal systems, completeness.}

\subjclass[2000]{Primary: 46E22; secondary: 30D15, 30D50, 42C30.}

\maketitle
\section{Introduction and main results}
\subsection{Statement of the problem}
Let $\mathcal{H}$ be a separable Hilbert space. A sequence of
vectors $\{v_n\}$ is said to be {\it complete} if
$\overline{Span\{v_n\}}=\mathcal{H}$. If, moreover, the system
$\{v_n\}$ is {\it minimal}, i.e. it
fails to be complete when we remove any vector, then we
say that the system is {\it exact}. For every exact
system of vectors $\{v_n\}$ there exists a unique biorthogonal
system $\{w_m\}$ such that $\langle v_n,w_m \rangle=\delta_{mn}$.

Suppose that $\mathcal{H}$ is a space of entire functions with
reproducing kernels. Namely, for each $\lambda \in\mathbb{C}$ there is an
element $k_\lambda \in\mathcal{H}$ such that
$\langle f,k_\lambda\rangle =f(\lambda)$ for all
$f\in\mathcal{H}$. We are looking for an answer to the following
question:

\begin{question}
Let $\{k_{\lambda_n}\}$ be an exact system of reproducing kernels in
$\mathcal{H}$. Is it true that its biorthogonal system is also
complete in $\mathcal{H}$?
\end{question}

Of course, for an arbitrary sequence of vectors its biorthogonal
system may be incomplete. If $\{e_n\}_{n=1}^\infty$ is an
orthonormal basis, then the system $\{e_n+e_1\}_{n=2}^\infty$ is
complete, but the biorthogonal system $\{e_n\}_{n=2}^{\infty}$ is
incomplete. On the other hand, it is well known that if we
restrict ourselves to {\it systems of reproducing kernels}, then the
answer may be positive. R.M. Young \cite{Y} proved the completeness
of systems biorthogonal to the systems of reproducing kernels in the
Paley--Wiener spaces or, equivalently, for biorthogonals to systems
of exponentials in $L^2$ on an interval. E. Fricain \cite{Fr}
extended this result to a class of de Branges spaces of entire
functions (see discussion below).

Our aim is to exhibit some classes of spaces for which we know the
answer (positive or negative). In particular, we answer the question
posed by N.K. Nikolski and construct an example of a model
(shift-coinvariant) subspace of the Hardy space $H^2$ with an
incomplete biorthogonal system.
In what follows we consider only systems biorthogonal 
to systems of reproducing kernels; therefore, sometimes we 
write simply {\it the biorthogonal system} in place 
of {\it the system biorthogonal to an exact 
system of reproducing kernels.}

To make the problem more realistic we need some additional
structure on $\mathcal{H}$, namely the existence of a \itshape Riesz
basis\normalfont. Recall that a system of vectors $\{v_n\}$ is said
to be a Riesz basis if $\{v_n\}$ is an image of an orthonormal basis
under a bounded and invertible operator in $\mathcal{H}$. We
consider the class $\mathfrak{R}$ of spaces of entire functions
satisfying three axioms:
\begin{itemize}
\item[(A1)] $\Hg$ has a
reproducing kernel $k_{\lambda}$ at every point
$\lambda\in\mathbb{C}$;
\item[(A2)] If a function $f$ is in $\mathcal{H}$ and $f(w)=0$, then
the function $\frac{f(z)}{z-w}$ is also in $\mathcal{H}$;
\item[(A3)] There exists a sequence of distinct points $T=\{t_n\}
\subset \mathbb{C}$
such that the sequence of normalized reproducing kernels
$\big\{k_{t_n}/\|k_{t_n}\|_{\Hg}\big\}$ is a Riesz basis for $\Hg$.
\end{itemize}

First example of such spaces is the Paley--Wiener space $PW_\pi$
which is the space of entire functions of exponential type at most
$\pi$ that are in $L^2(\mathbb{R})$. In this case the sequence
$\{\frac{\sin(\pi (z-n))}{\pi (z-n)}\}_{n\in\mathbb{Z}}$ is an
orthonormal basis of reproducing kernels and $(A3)$
is satisfied. Axioms $(A1), (A2)$ follow immediately.

More general examples are de Branges spaces.
We say that an entire function $E$ belongs to the Hermite--Biehler class
if it has no real zeros and $|E(z)|>|E(\overline{z})|$ for any $z$
in the upper half-plane $\mathbb{C}^+$.
The de Branges space $\mathcal H(E)$ consists of all
entire functions $f$ such that $f\slash E$ and
$f^*\slash{E}$ belong to the
Hardy space $H^2$ in $\mathbb{C}^+$;
here $f^*(z) = \overline{f(\overline{z})}$.
The norm in $\mathcal H(E)$ is given by
$$
    \|f\|^2_{\mathcal{H}(E)}=\int_{\mathbb{R}}\frac{|f(x)|^2}{|E(x)|^2}\,dx.
$$
As in the Paley--Wiener space, in de Branges spaces there exist
orthonormal bases of reproducing kernels (see \cite{DB}). So de
Branges spaces form a subclass of our class $\mathfrak{R}$.

\subsection{Parametrization of the class $\mathfrak{R}$}
We will use an explicit parametrization of the class $\mathfrak{R}$
from \cite{BMS}. Let us briefly remind the description from
\cite{BMS}.

The Riesz basis $\{k_{t_n}/\|k_{t_n}\|_{\Hg}\}$ has a biorthogonal
basis, which we will call $\{f_n\}$. Using axiom (A2) we
conclude that $f_n(z)\frac{(z-t_n)}{(z-t_m)}\in\mathcal{H}, m\neq
n$. This function vanishes at the points $t_l$, $l\neq m$, and so it
equals $f_m$ up to a multiplicative constant. Hence, the function $c_m
f_m(z)(z-t_m)$ does not depend on $m$ for suitable coefficients
$c_m$. We call this function the {\it generating function} of the
sequence $\{t_n\}$ and denote it by $F$. Note that
using this construction we may define the generating function 
for an arbitrary exact system of reproducing kernels, 
not just for a Riesz basis.

The sequence $f_n$ is also
a Riesz basis for $\Hg$, and therefore any vector $h$ in $\Hg$ can
be written as
\begin{equation}
\label{lagrange}
h(z)=\sum_{n}
h(t_n)\frac{F(z)}{F'(t_n)(z-t_n)},
\end{equation}
where the sum converges with respect to the norm of $\Hg$ and
\[
A\sum_n \frac{|h(t_n)|^2}{\|k_{t_n}\|_{\Hg}^2}
\le \|h\|_{\Hg}^2  \le B \sum_n \frac{|h(t_n)|^2}{\|k_{t_n}\|_{\Hg}^2}
\]
for some constants $A,B>0$ independent of $h$.
Since point evaluation at every point $z$ is a bounded
linear functional, the series in \eqref{lagrange} also converges 
uniformly on compact subsets of $\mathbb{C}\setminus T$. 
By the assumption that $h\mapsto
\{h(t_n)/\|k_{t_n}\|_{\Hg}\}$ is a bijective map from $\Hg$ to
$\ell^2$, we get
\begin{equation}
\label{pointwise}
\sum_n \frac{\|k_{t_n}\|_{\Hg}^2}{|F'(t_n)|^2|z-t_n|^2}<+\infty
\end{equation}
whenever $z$ is in $\mathbb{C}\setminus T$. Therefore
\eqref{pointwise} implies that
\begin{equation}
\sum_{n} \frac{b_n}{1+|t_n|^2}<+\infty,\qquad
b_n:=\frac{\|k_{t_n}\|_{\Hg}^2}{|F'(t_n)|^2}.
\label{adm}
\end{equation}

It follows from \eqref{lagrange} that we can associate with the space
$\Hg\in\mathfrak{R}$ a space of \itshape meromorphic functions
with prescribed poles\normalfont. Namely, given a sequence of
distinct complex numbers $T=\{t_n\}$ and a weight sequence $b=\{b_n\}$
which satisfy the admissibility condition \eqref{adm}, we introduce
the space $\Hg(T,b)$ consisting of all functions of the form
\begin{equation}
f(z)=\sum_{n=1}^\infty\frac{a_n b^{1/2}_n}{z-t_n} \label{param}
\end{equation}
such that
\[
\|f\|_{\Hg(T,b)}^2:=\sum_{n=1}^\infty |a_n|^2 <+\infty.
\]

The map $f\mapsto Ff$ is an isomorphism of $\Hg(T,b)$ onto $\Hg$
which maps reproducing kernels to reproducing kernels. So, for our
approach, we can consider the pairs $(T,b)$ as a parametrization of
all spaces from $\mathfrak{R}$.

The space $\Hg \in \mathfrak{R}$ is a de Branges space $\Hg(E)$ for
some Hermite--Biehler function $E$ if and only if there exists $T$
such that $T\subset\mathbb{R}$. In this case we may choose $E$ so
that $F=\frac{E+E^*}{2}$. Moreover, as was shown in \cite{BMS2}, de
Branges spaces are the only spaces of the class $\mathfrak{R}$ where
there exist two different orthogonal bases of reproducing kernels.
These spaces are the prime example for us. Note, however, that there
are many spaces in the class $\mathfrak{R}$ which are not isomorphic
to a de Branges space. E.g., let $T = \{u_n\} \cup \{i w_n\}$, where
$u_n$, $w_n$ are arbitrary sequences of real points satisfying
$\sum_n |u_n|^{-1} = \sum_n |w_n|^{-1} =\infty$. Then the space
$F\Hg(T,b)$ is not a de Branges space (in any half-plane) since for
a Riesz sequence of normalized kernels
$\{k_{\lambda_n}/\|k_{\lambda_n}\|_{\Hg}\}$ in a de Branges space
$\Hg$, the sequences $\{\lambda_n\} \cap \mathbb{C}^+$ and
$\{\lambda_n\} \cap \mathbb{C}^-$ should satisfy the Carleson
interpolation condition \cite[Part D, Lemma 4.4.2]{nk2}.

\subsection{Main theorems}
Now we are ready to state our main results.
\begin{theorem}
\label{main1}
If $\Hg\in\mathfrak{R}$ and
$\sum_n b_n<+\infty$,
then there exists an exact system of
reproducing kernels such that its biorthogonal system is not complete.
\end{theorem}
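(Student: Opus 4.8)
The plan is to work inside the functional model $\Hg(T,b)$: since the isomorphism $f\mapsto Ff$ of $\Hg(T,b)$ onto $\Hg$ carries reproducing kernels to reproducing kernels, it preserves exactness of systems of kernels and completeness of their biorthogonal systems, so it suffices to produce the required example in $\Hg(T,b)$. The hypothesis $\sum_n b_n<+\infty$ is used twice. First, it is precisely the statement that
\[
\phi(z):=\sum_n\frac{b_n}{z-t_n}=\sum_n\frac{b_n^{1/2}\cdot b_n^{1/2}}{z-t_n}\in\Hg(T,b)
\]
(its coefficient sequence $(b_n^{1/2})$ lies in $\ell^2$), and $\phi\not\equiv 0$; this $\phi$ will be orthogonal to the whole biorthogonal system. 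Second, it forces every $f=\sum_n\frac{a_nb_n^{1/2}}{z-t_n}\in\Hg(T,b)$ to satisfy $zf(z)\to C_f:=\sum_n a_nb_n^{1/2}$ at infinity (an absolutely convergent sum, by Cauchy--Schwarz), which powers the completeness argument below.

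Next I would build the new system by prescribing residues. Fix a sufficiently sparse infinite set $S\subset\N$ with $\sum_{m\in S}b_m^{1/2}<\infty$ and $\sum_{m\in S}(1+|t_m|^2)^{-1}<\infty$ — possible since $b_m\to0$ and $|t_m|\to\infty$ — and put $\rho_m=b_m^{1/2}$ for $m\in S$ and $\rho_m=\delta_m$ for $m\notin S$, with $\delta_m>0$ chosen so small that the series below converge, that $\sum_m\rho_m\neq0$, and (generically) that the zero set of $G$ below is simple. Let
\[
G(z):=\sum_m\frac{\rho_m}{z-t_m},
\]
a meromorphic function with simple poles exactly at $T$ and with $G(z)\sim\bigl(\sum_m\rho_m\bigr)/z$ at infinity; let $\Lambda=\{\lambda_n\}$ be its zero set, so $\Lambda\cap T=\emptyset$. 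Then $\{k_{\lambda_n}\}$ is an exact system of kernels in $\Hg(T,b)$ with generating function $G$. For minimality, the identity $\frac{G(z)}{z-\lambda_n}=\sum_m\frac{\rho_m/(t_m-\lambda_n)}{z-t_m}$ (partial fractions, using $G(\lambda_n)=0$) has coefficient sequence $\bigl(\frac{\rho_m}{(t_m-\lambda_n)b_m^{1/2}}\bigr)_m\in\ell^2$ (the $m\in S$ part converges by sparseness of $S$, the rest because the $\delta_m$ are small), so $v_n:=\frac{G}{G'(\lambda_n)(z-\lambda_n)}\in\Hg(T,b)$ and $\{v_n\}$ is biorthogonal to $\{k_{\lambda_n}\}$. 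For completeness, if $f\in\Hg(T,b)$ vanishes on $\Lambda$ then $f/G$ is entire (simple zeros and poles cancel); since $zf(z)$ and $zG(z)$ tend to finite limits — the latter nonzero — uniformly along a sequence of circles $|z|=R_k\to\infty$ avoiding $T$, the function $f/G$ is bounded, hence constant by Liouville; and since $G\notin\Hg(T,b)$ (its candidate coefficient sequence satisfies $\sum_m|\rho_m|^2/b_m\ge\sum_{m\in S}1=\infty$) this forces $f\equiv0$. Thus $\Lambda$ is a uniqueness set and $\{k_{\lambda_n}\}$ is complete, hence exact.

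Finally, the key computation: for every $n$,
\[
\Bigl\langle\phi,\frac{G}{\,\cdot\,-\lambda_n}\Bigr\rangle_{\Hg(T,b)}
=\sum_m b_m^{1/2}\,\overline{\frac{\rho_m}{(t_m-\lambda_n)b_m^{1/2}}}
=\overline{\sum_m\frac{\rho_m}{t_m-\lambda_n}}
=-\,\overline{G(\lambda_n)}=0,
\]
so $\phi\perp v_n$ for all $n$. Since $\phi\neq0$, the biorthogonal system $\{v_n\}$ is incomplete, and transporting the construction back through $f\mapsto Ff$ gives the statement for $\Hg$.

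The main obstacle is the completeness step: the residues $(\rho_m)$ must be chosen so that $G$ is simultaneously ``too large'' for $\Hg(T,b)$ (needed for $\Lambda$ to be a uniqueness set) yet $G/(z-\lambda_n)\in\Hg(T,b)$ for each $n$ (needed for minimality), after which one must carry out the growth/normal-families argument — including the uniformity of $zf(z)\to C_f$ on a suitable sequence of circles — to conclude that a function of $\Hg(T,b)$ divisible by $G$ vanishes identically. Everything else reduces to bookkeeping with the explicit parametrization and elementary partial fractions.
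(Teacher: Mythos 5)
Your proposal is correct and follows essentially the same route as the paper: the same choice of residues ($b_m^{1/2}$ on a sparse subsequence, negligibly small elsewhere), the same orthogonal vector $\sum_n b_n/(z-t_n)$ (this is exactly the paper's function $h$ obtained by taking $S\equiv 1$ in its parametrization of the orthogonal complement), and the same completeness argument via growth estimates on circles avoiding a system of excluded disks. The only notable difference is at the very end: the paper settles for the weaker conclusion $|f/G|\lesssim 1+|z|$ (so the circles need only avoid disks of radius $|t_{n_k}|/10$ around the sparse points and tiny disks around the remaining $t_n$, rather than deliver uniform convergence of $zf(z)$ for an arbitrary $f$) and then eliminates the resulting linear polynomial using the divergence $\sum_m \rho_m^2/b_m=+\infty$ --- a slightly more forgiving version of your Liouville step that you would do well to adopt when carrying out the circle estimates you flag as the remaining work.
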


A converse result says that if $b_n$ have no more than a power decay,
then the biorthogonal systems are (almost) complete.

\begin{theorem}
\label{main2}
If there exists $N>0$ such that
\[
  \inf_m b_m(1+|t_m|)^N >0,
\]
then the orthogonal complement
to a system biorthogonal to an exact system of reproducing kernels
is finite-dimensional.

If, moreover, $\sum_n b_n=+\infty$, then
any system biorthogonal to an exact system of reproducing kernels is
complete in $\Hg$.
\end{theorem}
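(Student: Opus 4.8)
The plan is to transfer everything to the model space $\Hg(T,b)$ and, via the generating function of $\Lambda$, to reduce the completeness question to a uniqueness problem for a space obtained from $\Hg(T,b)$ by relaxing the weight by a polynomial factor.

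Since $f\mapsto Ff$ maps $\Hg(T,b)$ isometrically onto $\Hg$, carries reproducing kernels to reproducing kernels, and preserves completeness, minimality and biorthogonality, we may assume $\Hg=\Hg(T,b)$. Let $\Lambda=\{\lambda_j\}$ be such that $\{k_{\lambda_j}\}$ is exact, let $G$ be its generating function (defined as after \eqref{lagrange}), so that the biorthogonal system is $g_j(z)=G(z)\big/\bigl(G'(\lambda_j)(z-\lambda_j)\bigr)$, put $\rho_n:=\operatorname{Res}_{t_n}G$, and set $Z:=\{n:\rho_n=0\}$, the indices of the points of $T$ at which $G$ has no pole. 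For $f(z)=\sum_n a_nb_n^{1/2}/(z-t_n)$ with $\{a_n\}\in\ell^2$, expanding $g_j$ in the same parametrization and pairing residue sequences yields
\[
\langle f,g_j\rangle=\frac1{\overline{G'(\lambda_j)}}\sum_n\frac{a_n\,\overline{\rho_n}}{b_n^{1/2}\,(\bar t_n-\bar\lambda_j)},
\]
the series converging because $g_j\in\Hg(T,b)$. Hence, with $x_n:=\overline{a_n}\rho_n/b_n$, one has $f\in\mathcal N:=\{g_j\}^{\perp}$ if and only if
\begin{equation}\label{Nchar}
\sum_n\frac{x_n\,b_n^{1/2}}{t_n-\lambda_j}=0\qquad\text{for every }j .
\end{equation}
If $\{x_n\}$ were in $\ell^2$, \eqref{Nchar} would say that the element $\sum_n x_nb_n^{1/2}/(z-t_n)$ of $\Hg(T,b)$ vanishes on $\Lambda$, hence is zero (completeness of $\{k_{\lambda_j}\}$ means exactly that $\Lambda$ is a uniqueness set for $\Hg(T,b)$), so $x_n\equiv0$, i.e.\ $a_n=0$ for all $n\notin Z$; the role of the hypothesis is to make this work after dividing by a polynomial.

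From $g_1\in\Hg(T,b)$ one reads off the a priori bound $|\rho_n|\le C(1+|t_n|)b_n^{1/2}$, which together with $b_n^{-1}\le c^{-1}(1+|t_n|)^{N}$ gives $|x_n|\le C'|a_n|(1+|t_n|)^{1+N/2}$. Fix an integer $d\ge N/2+1$, pick $d$ distinct points $\mu_1,\dots,\mu_d\in\Lambda$ (in generic position with respect to $T$), let $q(z)=\prod_k(z-\mu_k)$, and put $y_n:=x_n/q(t_n)$; then $\{y_n\}$ is dominated by $\{|a_n|\}\in\ell^2$, so $\widetilde\psi(z):=\sum_n y_nb_n^{1/2}/(z-t_n)$ lies in $\Hg(T,b)$ with $\|\widetilde\psi\|\lesssim\|f\|$. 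A residue computation (using \eqref{adm}, \eqref{pointwise} and a careful treatment of the conditionally convergent series that occur) shows that, for $f$ as above, \eqref{Nchar} is equivalent to $q(\lambda_j)\widetilde\psi(\lambda_j)=Q(\lambda_j)$ for all $j$, where $Q$ is a polynomial of degree $\le d-1$ depending on $f$. Evaluating at $\lambda_j=\mu_k$ forces $Q(\mu_k)=0$ for $k=1,\dots,d$, whence $Q\equiv0$ and $\widetilde\psi$ vanishes on $\Lambda\setminus\{\mu_1,\dots,\mu_d\}$; since $\Lambda$ is a uniqueness set for $\Hg(T,b)$, the space of such $\widetilde\psi$ has dimension $\le d$. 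As the kernel of $f\mapsto\widetilde\psi$ consists of the $f$ supported on $Z$ (there $y_n\equiv0$), we conclude $\dim\mathcal N\le|Z|+d$. Finally, when $\sum_n b_n=+\infty$ one has $Z=\varnothing$, and a refinement of the above (exploiting that $\sum_n b_n=+\infty$ prevents $\Hg(T,b)$ from containing any nonzero ``polynomial–type'' element $\sum_n P(t_n)b_n/(z-t_n)$) forces the only $\widetilde\psi$ arising from $\mathcal N$ to be $\widetilde\psi=0$, hence $\mathcal N=\{0\}$ and every biorthogonal system is complete.

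The step that carries the real weight is the finiteness of $Z$ (and its emptiness when $\sum_n b_n=+\infty$): one must show that the generating function of an exact system of reproducing kernels cannot silently omit infinitely many — respectively any — of the points carrying a Riesz basis of kernels, and it is precisely here that the at-most-polynomial decay of the $b_n$ is indispensable, since (cf.\ Theorem~\ref{main1}) infinitely many omissions do occur when $\sum_n b_n<+\infty$. The remaining ingredients — the residue formula, the estimate $|\rho_n|\lesssim(1+|t_n|)b_n^{1/2}$, and the division argument — are routine once the parametrization of $\mathfrak R$ from \cite{BMS} is in hand.
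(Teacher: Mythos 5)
Your overall route for the first assertion is workable and genuinely different from the paper's: instead of introducing the auxiliary space $\mathcal{S}$ of entire functions $S$ with $GS/F=\sum_m a_m G(t_m)/\bigl(\|k_{t_m}\|_{\Hg}(t_m-z)\bigr)$ and the difference-quotient Lemma \ref{lem}, you divide by a polynomial $q$ vanishing at $d\ge N/2+1$ points of $\Lambda$ and count dimensions directly, and you use the hypothesis $\inf_m b_m(1+|t_m|)^N>0$ in exactly the right place (to get $\{x_n/q(t_n)\}\in\ell^2$). The genuine defect is your closing paragraph. The set $Z=\{n:\rho_n=0\}=\{n:G(t_n)=0\}$ is empty, not merely finite, and for free: the zero set of the generating function of an exact system is precisely $\Lambda$ (if $G(w)=0$ with $w\notin\Lambda$, then by (A2) the function $G/\bigl((z-\lambda_0)(z-w)\bigr)$ lies in $\Hg$, vanishes on $\Lambda\setminus\{\lambda_0\}$ but not at $\lambda_0$, and is not proportional to $G/(z-\lambda_0)$; this contradicts the fact that, by exactness and completeness, the orthogonal complement of $\overline{{\rm span}}\{k_{\lambda_n}\}_{n\ne 0}$ is one-dimensional), and one assumes $\Lambda\cap T=\emptyset$ at the outset, as the paper does. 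Moreover your reading of Theorem \ref{main1} is wrong: in that construction $G(t_n)=c_nt_nF'(t_n)\neq 0$ for \emph{every} $n$, so no points of $T$ are ``omitted''; the incompleteness there comes from the admissibility of $S\equiv 1$, i.e.\ from $\sum_n b_n<\infty$ via \eqref{1and2}. So the step you declare to ``carry the real weight'' is both unproved and misidentified; the hypothesis on $b_n$ enters only through the estimate you already made.

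The second assertion is where the argument is actually incomplete. Knowing that $\widetilde\psi$ lies in the at most $d$-dimensional space of elements of $\Hg(T,b)$ vanishing on $\Lambda\setminus\{\mu_1,\dots,\mu_d\}$ does not yet let you invoke $\sum_n b_n=\infty$: you must first identify that space with $\{GR/(Fq):\ \deg R<d\}$ (it contains this $d$-dimensional family by (A2) and partial fractions, hence coincides with it), and then note that $\widetilde\psi=GR/(Fq)$ forces $\overline{a_n}=R(t_n)b_n^{1/2}$, so $\sum_n|R(t_n)|^2b_n<\infty$, which for a nonzero polynomial $R$ contradicts $\sum_n b_n=\infty$; hence $R\equiv 0$, $\widetilde\psi=0$, and (since $Z=\emptyset$) $f=0$. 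This identification of the residual finite-dimensional space with polynomial-type elements is exactly the content of the paper's Lemma \ref{lem} ($\mathcal{S}$ finite-dimensional implies $\mathcal{S}=\mathcal{P}_n$) and cannot be dismissed as a ``refinement''. Finally, the ``careful treatment of the conditionally convergent series'' should at least be indicated: the individual moment sums $\sum_n y_n b_n^{1/2}t_n^k$, $k\le d-1$, are seen to converge by evaluating your identity at $d$ distinct $\lambda_j$ and inverting a Vandermonde system; that part is routine.
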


The restriction on the decay of $b_n$ in Theorem \ref{main2}
is essential.

\begin{example}
{\rm There exists a space $\Hg \in\mathfrak{R}$ such that
$\sum_n b_n=+\infty$, but there
exists an exact system of reproducing kernels such that its
biorthogonal is not complete.}
\label{b_n_infty_example}
\end{example}

Young's result about the Paley--Wiener space corresponds to the
situation when $t_n=n$, $n\in\mathbb{Z}$, and $b_n=1$, and follows
from Theorem \ref{main2}. E. Fricain have proved the completeness of
biorthogonal system in de Branges spaces under the assumption that
$\sup_{x\in\mathbb{R}}\varphi'(x)<+\infty$, $\varphi$ being the
phase function for $E$, that is, a smooth branch of the argument of
$E$ on $\mathbb{R}$. We show now that this assumption  implies a
lower estimate on $b_n$. We will use the de Branges decomposition
$E=A - iB$, where $A$ and $B$ are entire functions real on the real
axis. As known, reproducing kernels in the de Branges space
$\mathcal{H}(E)$ are given by
\begin{equation}
\label{he-repr} k_w(z)=\frac{1}{\pi}\cdot\frac{B(z)\cdot
A(\overline{w})- A(z)B(\overline{w})}{z-\overline{w}}.
\end{equation}
Without loss of generality we may assume that
$\{k_{t_n}/{\|k_{t_n}\|}\}$ is an orthonormal basis, where $t_n \in
\mathbb{R}$ are all solutions of the equation $A(z)=0$ \cite[Theorem
22]{DB}. Of course, $A$ is the corresponding generating function. In
this notation
\begin{equation}
\label{he-bn}
b_n= \frac{\|k_{t_n}\|^2}{|A'(t_n)|^2}=\frac{k_{t_n}(t_n)}{|A'(t_n)|^2}=
\frac{B(t_n)}{\pi A'(t_n)} = \frac{1}{\pi \varphi'(t_n)}.
\end{equation}
Hence, $\inf_n b_n>0$ and the result follows from Theorem \ref{main2}.

\subsection{Size of the orthogonal complement}
Now we turn to the question of "size"\, of the orthogonal
complement to a biorthogonal system in the case when it is
not complete. A precise definition of the "size" and the main
results are given in $\S3$. Here we only emphasize the following
informal principle:
\smallskip

{\it The size of the orthogonal complement to a
biorthogonal system depends on smallness of the sequence $\{b_n\}$.
The orthogonal complement becomes bigger if $b_n$ tend to zero
faster. If, however, $\{b_n\}$ are extremely small, then the
orthogonal complement is finite-dimensional. }
\smallskip

Now we give a precise formulation of the latter property. Put
$$
\ell^2(T, b) :=\big\{f:T\to\mathbb{C}:\ \sum_n |f(t_n)|^2 b_n<+\infty \big\}.
$$
The following result relates the size of the biorthogonal system to
the density of polynomials on discrete subsets of the real line.
Assume that $b_n$ are so small that
the polynomials belong to $\ell^2 (T,b)$ and are dense there, that is,
there is no non-trivial sequence $\{c_n\}\in \ell^2(T,b)$,
such that $\sum_n c_n n^k=0$ for any
$k\in\mathbb{N}_0$.

\begin{theorem}
Let $T\subset \mathbb{R}$ and assume that
the polynomials are dense in $\ell^2 (T,b)$.
If $\Hg$ is the Hilbert space of
the class $\mathfrak{R}$ corresponding to $\Hg(T,b)$,
then the closed linear span of the system biorthogonal to an exact
system of reproducing kernels always has finite codimension.
\label{moment}
\end{theorem}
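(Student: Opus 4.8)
We work throughout in the model space $\Hg(T,b)$. Since $f\mapsto Ff$ is an isomorphism of $\Hg(T,b)$ onto $\Hg$ that carries normalized reproducing kernels to normalized reproducing kernels, it sends exact systems of reproducing kernels to exact systems, the biorthogonal system to the biorthogonal system (up to a diagonal rescaling), and preserves codimensions of closed spans. Thus it suffices to prove the statement in $\Hg(T,b)$, whose elements we write as $f(z)=\sum_n a_nb_n^{1/2}(z-t_n)^{-1}$ with $\|f\|^2=\sum_n|a_n|^2$. First I recast the hypothesis. Because the polynomials belong to $\ell^2(T,b)$, we have $\sum_n b_n|t_n|^{2k}<\infty$ for all $k\ge 0$, so the functionals $L_k(f):=\sum_n a_nb_n^{1/2}t_n^k$ are bounded on $\Hg(T,b)$, and the vector representing $L_k$ is $\ell_k(z):=\sum_n b_nt_n^k(z-t_n)^{-1}$. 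Identifying $(a_n)$ with $a_n=b_n^{1/2}\overline{g(t_n)}$ and unwinding the inner product of $\ell^2(T,b)$, \emph{density} of the polynomials in $\ell^2(T,b)$ is exactly the statement $\bigcap_{k\ge 0}\ker L_k=\{0\}$, i.e.\ completeness of $\{\ell_k\}_{k\ge0}$ in $\Hg(T,b)$. Hence it is enough to produce an integer $k_0$ such that $\ell_k$ lies in the closed span of the biorthogonal system $\{g_j\}$ for every $k\ge k_0$: then that closed span contains $\overline{\operatorname{span}}\{\ell_k:k\ge k_0\}$, which has codimension at most $k_0$ in $\overline{\operatorname{span}}\{\ell_k:k\ge0\}=\Hg(T,b)$.

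To realize this, let $G$ be the generating function of the exact system, so $g_j(z)=G(z)\bigl(G'(\lambda_j)(z-\lambda_j)\bigr)^{-1}$; then $G$ is meromorphic with simple poles on $T$, zeros exactly at the $\lambda_j$, and is bounded at infinity. Put $\beta_n:=\mathrm{Res}_{t_n}G$. A residue computation gives, for $h=\sum_n a_nb_n^{1/2}(z-t_n)^{-1}$,
\[
\langle h,g_j\rangle=\frac{1}{\overline{G'(\lambda_j)}}\,\Phi_h(\overline{\lambda_j}),\qquad \Phi_h(w):=\sum_n\frac{a_n\overline{\beta_n}b_n^{-1/2}}{t_n-w},
\]
the conjugations being harmless since $T\subset\R$. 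Thus $h$ is orthogonal to all $g_j$ iff $\Phi_h$ vanishes at every $\overline{\lambda_j}$. Form $\Psi_h:=\Phi_h/G^{*}$, where $G^{*}(z)=\overline{G(\overline z)}$ has simple poles on $T$ and simple zeros exactly at the $\overline{\lambda_j}$; cancelling the poles of $\Phi_h$ on $T$ against those of $G^{*}$, and the zeros of $G^{*}$ against the zeros of $\Phi_h$, shows that $\Psi_h$ is \emph{entire}, with $\Psi_h(t_n)=-a_nb_n^{-1/2}$. Consequently $h(z)=-\sum_n b_n\Psi_h(t_n)(z-t_n)^{-1}$, so $h\in\operatorname{span}\{\ell_0,\dots,\ell_m\}$ whenever $\Psi_h$ is a polynomial of degree $\le m$. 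The theorem is therefore reduced to:

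\emph{Claim.} There is $k_0$ depending only on $G$ such that $\Psi_h$ is a polynomial of degree $\le k_0$ for every $h$ orthogonal to all $g_j$.

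For the claim one first checks that $h\mapsto\Phi_h$ is an isometry of $\Hg(T,b)$ onto $\Hg(T,\tilde b)$ with $\tilde b_n=|\beta_n|^2/b_n$, so $\|\Phi_h\|=\|h\|$ and a routine Cauchy–transform estimate bounds $|\Phi_h|$ by a fixed multiple of $\|h\|$ on a region $\{|\operatorname{Im}z|\ge h_0\}$, with $h_0$ depending only on $(T,\tilde b)$. Next, writing $G^{*}(z)=\overline C+\sum_n\overline{\beta_n}(z-t_n)^{-1}$ and using $\sum_n|\beta_n|\,|t_n|^{m_0+1}<\infty$ — again a consequence of the polynomials lying in $\ell^2(T,b)$ — one gets $|G^{*}(z)|\asymp|z|^{-m_0-1}$ on $\{|\operatorname{Im}z|\ge h_0\}$ (after enlarging $h_0$), where $m_0$ is the order of vanishing of $G$ at infinity (if $C\ne0$ simply $|G^{*}|\asymp 1$ there). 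Dividing, $\Psi_h$ is entire and $O(|z|^{m_0+2})$ on $\{|\operatorname{Im}z|\ge h_0\}$ with constant a multiple of $\|h\|$. The hard part is the passage from this bound, valid only off a horizontal strip, to a \emph{global} polynomial bound of degree $\le m_0+2=:k_0$: one needs to rule out that $\Psi_h$ grows rapidly across $\R$, and this is exactly where the remaining strength of the hypothesis is used — density of the polynomials in $\ell^2(T,b)$ prevents transcendental entire functions from being admissible here, while the separation of $T$ furnished by axiom $(A3)$ lets one run a Phragmén–Lindelöf argument in the strip $\{|\operatorname{Im}z|\le h_0\}$. I expect this strip estimate to be the principal technical obstacle; granting it, $\Psi_h$ is a polynomial of degree $\le k_0$, and the reduction above finishes the proof.
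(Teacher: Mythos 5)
Your reduction is sound and, up to notation, it follows the same route as the paper: you parametrize the orthogonal complement of the biorthogonal system by entire functions $\Psi_h$ interpolating the coefficients of $h$ at the points $t_n$ (this is exactly the space $\mathcal{S}$ of Section 2), you use the first nonvanishing moment of the residues to obtain a polynomial lower bound for the generating function away from $\mathbb{R}$, and you deduce a polynomial upper bound for $\Psi_h$ off a horizontal strip. One remark on this part: the finiteness of your $m_0$ (the ``order of vanishing of $G$ at infinity'') is not automatic --- if all moments $\sum_n \beta_n t_n^k$ vanish, then $G^*(iy)$ may decay faster than any power, which is precisely what happens in Proposition \ref{big_size} --- and its finiteness is exactly where density of the polynomials in $\ell^2(T,b)$ enters (the paper takes the smallest $N$ with $\sum_n c_n t_n^N \neq 0$). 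You invoke the hypothesis only to justify convergence of the moment sums, not the nonvanishing of one of them; this is easily repaired but it is the crux of the hypothesis and must be made explicit.

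The genuine gap is the step you yourself flag as ``the principal technical obstacle'': passing from a polynomial bound on $\Psi_h$ outside the strip $\{|\operatorname{Im} z|\le h_0\}$ to the conclusion that $\Psi_h$ is a polynomial. This implication is false for general entire functions: by Arakelian-type approximation there exist entire functions bounded outside a horizontal strip which grow faster than $e^{x^2}$ on $\mathbb{R}$, so no Phragm\'en--Lindel\"of argument in the strip can succeed without an a priori growth bound (finite order, or finite exponential type), and neither the separation of $T$ nor the density hypothesis supplies such a bound directly. The paper closes exactly this gap with Theorem \ref{exp-type}: writing $\Theta = E^*/E$, it shows that $GS/E$ is in the Smirnov class (the Cauchy transform of an $\ell^1$ sequence of masses on $\mathbb{R}$ has positive imaginary part), that $G/E$ carries no singular inner factor $e^{2iaz}$ (otherwise $\{\lambda_n\}$ would fail to be a uniqueness set), and hence that $S$ is in the Smirnov class in both half-planes; Krein's theorem then gives zero exponential type, after which the polynomial bound on the imaginary axis alone forces $S$ to be a polynomial of degree at most $N+1$. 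Without this bounded-type argument (or an equivalent one), your proof does not close.
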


Density of polynomials in the spaces of the form $\ell^2(T,b)$
is closely connected to the quasianalyticity phenomena
(see \cite{ko,bs}). We illustrate this by the following example
(further examples are given in Section 3).

\begin{example}
{\rm Let $t_n=n$, $n\in \mathbb{Z}$, and let $b_n=\exp(-|n|)$.
Then any biorthogonal system has finite codimension.
More generally, let $w=\exp(-\Omega)$ be an even
function such that $\Omega(e^t)$ is a convex function of $t$
($w$ is a so-called normal majorant).
Let $b_n = w(n)$. If $\sum_{n} \frac{|\log b_n|}{n^2+1} =+\infty$,
then any biorthogonal system has finite codimension. }
\end{example}

Finally, let us summarize:

$*$ If $b_n$ has at most power decay, then any biorthogonal system has
finite codimension (Theorem \ref{main2});

$*$ If the polynomials belong to $\ell^2(T,b)$ and are not dense there
("non-quasianalytic case"), then the codimension may be infinite
(see Proposition \ref{big_size});

$*$ If the polynomials are dense in $\ell^2(T,b)$
("quasianalytic case"), then the codimension is finite
(Theorem \ref{moment}).
\medskip

\subsection{Model subspaces}
Our next result is about general {\it model} or {\it star-invariant}
subspaces $K_\Theta$ of the Hardy space $H^2$ in
the upper half-plane $\mathbb{C}^+$.
Let $\Theta$ be an inner function in $\mathbb{C}^+$,
that is, a bounded analytic function such that
$\lim_{y\to +0}|\Theta(x+iy)|=1$ for almost all
$x\in\mathbb{R}$. With each $\Theta$ we associate the subspace
$$
  K_\Theta=H^2\ominus\Theta H^2.
$$
These subspaces, as well as their vector-valued generalizations
play an outstanding role both in function theory and operator theory.
For their numerous applications we refer to \cite{nk, nk1, nk2}.
It is well known that if $\Theta$ has a meromorphic continuation
to the whole plane, then $\Theta = E^*/E$ for a function $E$
in the Hermite--Biehler class and the mapping $f\mapsto Ef$
is a unitary operator from $K_\Theta$ onto $\mathcal{H}(E)$,
which maps reproducing kernels onto reproducing kernels.

The reproducing kernels of the space $K_\Theta$ are of the form
\begin{equation}
\label{repr-kern}
k_\lambda(z) =\frac{i}{2\pi} \cdot \frac{1-\overline{\Theta(\lambda)}\Theta(z)}
{z-\overline \lambda},\qquad \lambda\in \mathbb{C}^+.
\end{equation}
Reproducing kernels of the model spaces have a rich and subtle
structure and their geometric properties (such as completeness,
Bessel sequences, Riesz basic sequences) are still not completely
understood (see, e.g., \cite{hnp, mp, bar06} and \cite[Part D,
Chapter 4]{nk2}). In particular, it is an open problem whether any
model subspace has a Riesz basis of reproducing kernels. A special
case of the completeness problem for reproducing kernels is the
completeness of exponential systems in $L^2(-a,a)$ (corresponding to
$\Theta(z) = e^{2iaz}$) settled by the classical Beurling--Malliavin
theory. A recent breakthrough in the completeness problem for
reproducing kernels in model subspaces is due to N. Makarov and A.
Poltoratski \cite{mp, mp1} who suggested a new approach based on
singular integrals and extended the Beurling--Malliavin theory to
some classes of model subspaces and de Branges spaces.

The problem whether the system biorthogonal to an exact system
of reproducing kernels is complete in $K_\Theta$ was posed by N.K. Nikolski;
it was studied by E. Fricain in \cite{Fr} where for the class
of inner functions with bounded derivatives
a positive answer was obtained. Theorem \ref{main1},
which applies to the case of meromorphic inner functions,
already shows that the answer in general is negative.
However, we are able to prove an analog of Theorem \ref{main1}
for general model spaces.

Let $\sigma(\Theta)$ be the spectrum of the inner function
$\Theta$, that is, the set of all $\zeta\in\mathbb{R}\cup{\infty}$
such that $\lim_{z\rightarrow\zeta}\inf|\Theta(z)|=0$. Note that
$\sigma(\Theta)$ is closed and $\Theta$ (and any $f\in K_\Theta$)
has analytic continuation across any interval of the set
$\mathbb{R}\setminus\sigma(\Theta)$. A point $\zeta \in \mathbb{R}$
is said to be a {\it Carath\'eodory point} for $\Theta$
if $\Theta$ has
an angular derivative at $\zeta$, that is, there exists
the nontangential limit $\Theta(\zeta)$ with $|\Theta(\zeta)|=1$,
as well as the nontangential limit  $\Theta'(\zeta) =
\lim_{z\to\zeta}\frac{\Theta(z) -\Theta(\zeta)}{z-\zeta}$.

\begin{theorem}
Let $\Theta$ be an inner function in $\mathbb{C}^+$ such
that there exists $\zeta \in\sigma(\Theta)$
which is a Carath\'eodory point for $\Theta$.
Then there exists an exact system of reproducing
kernels such that the biorthogonal system is not complete.
\label{th-model}
\end{theorem}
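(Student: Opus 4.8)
I would first record what the Carath\'eodory hypothesis buys and then reduce the theorem to a single construction. By the theory of angular derivatives (Ahern--Clark), if $\zeta$ is a Carath\'eodory point then every $f\in K_\Theta$ has a nontangential limit $f(\zeta)$; the function
\[
  k_\zeta(z)=\frac{i}{2\pi}\cdot\frac{1-\overline{\Theta(\zeta)}\,\Theta(z)}{z-\zeta}
\]
belongs to $K_\Theta$ and is nonzero (indeed $\|k_\zeta\|^2=|\Theta'(\zeta)|/(2\pi)>0$), and $\langle f,k_\zeta\rangle=f(\zeta)$ for all $f\in K_\Theta$. Hence $K_\Theta^\zeta:=\{f\in K_\Theta:\ f(\zeta)=0\}=K_\Theta\ominus\mathbb{C}k_\zeta$ has codimension one, so it is enough to exhibit an exact system of reproducing kernels $\{k_{\lambda_n}\}$ in $K_\Theta$ whose biorthogonal system $\{g_n\}$ is contained in $K_\Theta^\zeta$: then $k_\zeta\perp g_n$ for all $n$, and since $k_\zeta\neq0$ the biorthogonal system is incomplete. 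Since model subspaces satisfy the analogue of axiom (A2), the generating-function construction of the excerpt applies, so the biorthogonal of such a system has the form $g_n(z)=c_n\,G(z)/(z-\lambda_n)$ with $G$ its generating function. As the $\lambda_n$ lie in $\mathbb{C}^+$ while $\zeta\in\mathbb{R}$, the requirement $g_n(\zeta)=0$ for all $n$ is the same as asking that $G$ have nontangential limit $0$ at $\zeta$. So the task reduces to: \emph{construct a complete and minimal system of reproducing kernels in $K_\Theta$ whose generating function has nontangential limit $0$ at $\zeta$.}

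For this I would write the generating function as $G=B_\Lambda S O$ (Blaschke $\times$ singular inner $\times$ outer), with $\Lambda=\{\lambda_n\}=Z(G)\cap\mathbb{C}^+$, and arrange three things at once. Minimality forces $G/(z-\lambda_n)\in K_\Theta$ for every $n$, which fixes the Smirnov-class behaviour of $G$ and requires $\overline\Theta G/(z-\lambda_n)\in\overline{H^2}$ for each $n$. Completeness forces $\Lambda$ to be a uniqueness set for $K_\Theta$; since $G/(z-\zeta)$ vanishes on all of $\Lambda$, this is equivalent to $G/(z-\zeta)\notin K_\Theta$, and it is \emph{this} non-membership, rather than the mere vanishing of $G$ at $\zeta$, that is the delicate point. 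Finally $G$ must have nontangential limit $0$ at $\zeta$: since $B_\Lambda$ and $S$ are unimodular on $\mathbb{R}$, this is achieved by letting the outer factor $O$ vanish nontangentially at $\zeta$ at a rate tuned so that $G$ and every $G/(z-\lambda_n)$ remain square-integrable near $\zeta$ while $G(x)/(x-\zeta)$ does not --- the latter being exactly what keeps $G/(z-\zeta)$ out of $K_\Theta$. The zeros $\Lambda$ are taken clustering toward $\zeta$, and this is where $\zeta\in\sigma(\Theta)$ enters: near a spectral point $\Theta$ is ``spread out'', which leaves enough freedom to place a uniqueness set $\Lambda$ for $K_\Theta$ that still clusters at $\zeta$ as required. (When $\Theta$ is meromorphic one may instead start from the Clark generating function $1-\overline{\Theta(\zeta)}\Theta$, which already vanishes at $\zeta$, and modify it within the de Branges/Clark calculus.)

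I expect this construction to be the main obstacle. For meromorphic $\Theta$ it is subsumed in Theorem~\ref{main1} applied to $K_\Theta=\mathcal{H}(E)\in\mathfrak{R}$: the Carath\'eodory condition at $\zeta$ translates, via \eqref{he-bn}, into a summability property of the weights $b_n$ matching the hypothesis $\sum_n b_n<+\infty$, and the required generating function is furnished by the $\mathcal{H}(T,b)$ parametrization. In the general (non-meromorphic) case there is no such parametrization, so one must work directly with the boundary behaviour of $\Theta$ near $\zeta$; the tension is precisely between the two uses of the hypothesis --- completeness together with $G/(z-\zeta)\notin K_\Theta$ want $\Theta$ to be ``wild'' at $\zeta$, while the Carath\'eodory property keeps it ``tame'' (finite angular derivative) there. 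Reconciling these two pulls, presumably by an explicit Blaschke-product and Cauchy-transform construction localised in a nontangential approach region at $\zeta$, is the technical core of the argument.
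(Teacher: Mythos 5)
Your reduction is exactly the paper's: after a Frostman shift and a conformal change of variable sending $\zeta$ to $\infty$, the witness for incompleteness is $1-B\in K_B$ (the reproducing kernel at the Carath\'eodory point), and the task becomes the paper's conditions (i)--(iii): a function $g=B_\Lambda O_m\in K_B$ with $B\overline{g}=O_m$ outer, $\Lambda\cup\{\lambda_0\}$ a uniqueness set for $K_B$, and $\lim_{y\to\infty}yg(iy)=0$, i.e.\ $\langle g,1-B\rangle=0$. So the framing is correct. But your argument stops where the theorem actually begins: you declare the construction of such a generating function to be ``the technical core'' and do not carry it out, and the heuristic you offer for it rests on a misstatement that conceals where the difficulty really lies.

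The misstatement is the claim that completeness of $\{k_\lambda\}_{\lambda\in\Lambda}$ is \emph{equivalent} to $G/(z-\zeta)\notin K_\Theta$. It is only implied by it: if some $F\in K_\Theta$ vanishes on $\Lambda$, then $F=SG$ for \emph{some} function $S$ analytic in $\mathbb{C}^+$, and one must exclude every such $S$, not merely $S=(z-\zeta)^{-1}$. Almost all of the paper's work is devoted to this exclusion. Lemma \ref{lem1} (Frostman shifts of the singular factor) arranges $\Theta\overline{g}$ to be outer so that $S$ is of Smirnov class in both half-planes; Lemma \ref{lem2} removes real zeros of the initial function so that $S$ has no real poles; Lemma \ref{lem3} together with Krein's theorem then forces $S$ to be entire of zero exponential type, and the two-sided bound $|f(t)|\gtrsim t^{-2}$ on $\mathbb{R}$ cuts $S$ down to a polynomial of degree at most one. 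Ruling out degree one is precisely the step that consumes the hypothesis $\zeta\in\sigma(\Theta)$: the paper splits into two cases according to whether $\limsup_{y\to+\infty} y^2\bigl|1-B(iy)-q/y\bigr|$ is infinite, and in the harder case builds $f=1-B-iq\sum_n\sqrt{y_n}\,c_n(z-\overline{z_n})^{-1}$ over a sparse Carleson interpolating subsequence of the zeros of $B$, with $(c_n)\in\ell^2$ chosen so that $\sum_n c_n^2x_n^2=+\infty$, which contradicts $zO_m\in H^2$. None of this is anticipated by ``tune the rate of vanishing of the outer factor at $\zeta$''; in particular the paper's starting functions are Cauchy transforms of point masses at reflected zeros of $B$ rather than ad hoc outer functions, because orthogonality to $1-B$ and the lower bound on $|f|$ along $\mathbb{R}$ must be secured simultaneously. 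As it stands, the proposal is a correct reduction followed by an unproven existence claim, so the theorem is not established.
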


If $\sigma(\Theta)=\{\infty\}$ (i.e. $\Theta$ is meromorphic
in $\mathbb{C}$)
the existence of the "angular derivative at $\infty$"
(appropriately defined) is equivalent to the condition
$\sum_n b_n<+\infty$ for some orthogonal basis of reproducing kernels and
we arrive at Theorem \ref{main1} for de Branges spaces. We mention also that
a result analogous to Theorem \ref{th-model}
holds for the model spaces in the unit disc (see Theorem \ref{th-model1}).

In Section 5, Theorem \ref{compl3},
we obtain a condition sufficient for the completeness
of a  biorthogonal system in a general model
space $K_\Theta$
in terms of the generating function $G$.

Throughout this paper, the notation $U(z)\lesssim V(z)$ (or
equivalently $V(z)\gtrsim U(z)$) means that there is a constant $C>0$
such that $U(z)\leq CV(z)$ holds for all suitable $z$.
We write $U(z) \asymp V(z)$ if $U(z)\lesssim V(z)$
and $V(z)\lesssim U(z)$.


\section{Proof of Theorems \ref{main1} and \ref{main2}}

Let $\{k_{\lambda_n}\}$ be an exact
system in $\Hg$. Without loss of generality we can assume that
$\{\lambda_n\}\cap\{t_m\}=\emptyset$, since we always can move
slightly the points $\{t_m\}$ so that $\{k_{t_m}/\|k_{t_m}\|_\Hg\}$
remains a Riesz basis. Suppose that $F$ and $G$ are generating functions of
systems $\{k_{t_m}\}$ and $\{k_{\lambda_n}\}$ respectively. Then the
system $\frac{G(z)}{G'(\lambda_n)(z-\lambda_n)}$ is biorthogonal to
$\{k_{\lambda_n}\}$.

For any $h\in \Hg$ we have a representation with respect to the
Riesz basis $\{k_{t_m} / \|k_{t_m}\|_\Hg \}$:
\begin{equation}
\label{h_rep}
   h=\sum_m \overline a_m\frac{k_{t_m}}{\|k_{t_m}\|_\Hg},\qquad \{a_m\}
   \in\ell^2.
\end{equation}
The last series converges in the norm and pointwise.

The function $h$ is orthogonal to $\frac{G(z)}{z-\lambda_n}$ for all
$n$ if and only if for any $n$
\[
\Big \langle \frac{G}{z-\lambda_n}, h \Big\rangle =
  \sum_m\frac{a_m}{\|k_{t_m}\|}\cdot\frac{G(t_m)}{t_m-\lambda_n}=0.
\]
Consider the meromorphic function
$$
   L(z):=\sum_m\frac{a_m}{\|k_{t_m}\|_\Hg}\cdot\frac{G(t_m)}{t_m-z}.
$$
The series converges uniformly on compact subsets 
of $\mathbb{C}\setminus T$, since $\frac{G}{z-\lambda_n} 
\in \Hg$ and so $\big\{\frac{G(t_m)}{t_m \|k_{t_m}\|_\Hg}\big\} \in \ell^2$.
The function $LF$ is entire and vanishes at the points
$\{\lambda_n\}$. Hence, $S := LF\slash G$ is an entire function, and
\begin{equation}
    \frac{G(z)S(z)}{F(z)}=\sum_m\frac{a_m}{\|k_{t_m}\|_\Hg}
    \cdot\frac{G(t_m)}{t_m-z}.
\label{main_eq}
\end{equation}
It follows that $a_m=S(t_m)\frac{\|k_{t_m}\|_\Hg}{F'(t_m)}
=\frac{|F'(t_m)|}{F'(t_m)}S(t_m)b^{1/2}_m$.
Hence,
\begin{equation}
    \sum_m|S(t_m)|^2b_m<+\infty. \label{norm}
\end{equation}
We can consider functions $S$ from \eqref{main_eq} which satisfy
\eqref{norm} as parametrization of all functions $h$ orthogonal to a given
biorthogonal system $\{\frac{G(z)}{z-\lambda_n}\}$. We denote the
space of all such functions $S$ by $\mathcal{S}$. It is a Hilbert
space with respect to the norm given as the square root of
the left-hand side of \eqref{norm}. Moreover, the mapping
$$
S\mapsto \sum_m  \frac{|F'(t_m)|}{\overline{F'(t_m)}}\cdot\overline{S(t_m)}
\,b^{1/2}_m \cdot\frac{k_{t_m}}{\|k_{t_m}\|_\Hg}
$$
is a unitary operator from $\mathcal S$ onto the orthogonal complement
of the system $\Big\{\frac{G(z)}{z-\lambda_n}\Big\}$.
\medskip

Now we turn to the proof of Theorem \ref{main1}. From \eqref{param} we get
\begin{proposition}
Function $M$ is in $\Hg$ if and only if
\begin{equation}
\label{repr1}
  \frac{M(z)}{F(z)}=\sum_n\frac{c_n}{z-t_n},
  \qquad \sum_n\frac{|c_n|^2}{b_n}<+\infty.
\end{equation}
Here the series converges uniformly on compact sets in $\mathbb{C}\setminus T$,
while the series $\sum_n c_n \frac{F(z)}{z-t_n}$
converges in the norm of the space $\Hg$.
\label{prop}
\end{proposition}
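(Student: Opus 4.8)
The plan is to read the statement off from the parametrization of $\mathfrak R$ recalled above, since by construction $f\mapsto Ff$ is an isomorphism of $\Hg(T,b)$ onto $\Hg$. First I would note that $M\in\Hg$ if and only if $M=Ff$ for some $f\in\Hg(T,b)$, i.e. $M/F\in\Hg(T,b)$; by the definition of $\Hg(T,b)$ this is equivalent to $M(z)/F(z)=\sum_n a_nb_n^{1/2}/(z-t_n)$ for some $\{a_n\}\in\ell^2$. Setting $c_n:=a_nb_n^{1/2}$ turns this into $M/F=\sum_n c_n/(z-t_n)$ with $\sum_n|c_n|^2/b_n=\sum_n|a_n|^2<+\infty$, which is precisely \eqref{repr1}; conversely, given \eqref{repr1}, the sequence $\{a_n\}:=\{c_nb_n^{-1/2}\}$ lies in $\ell^2$, so $M/F\in\Hg(T,b)$ and hence $M\in\Hg$. (Here I use that the coefficients in \eqref{param} are uniquely determined by the function, seen by evaluating residues at the points $t_m$ and using $b_m>0$; thus $\{b_n^{1/2}/(z-t_n)\}_n$ is an orthonormal basis of $\Hg(T,b)$ and the correspondence $\{a_n\}\leftrightarrow M$ above is a bijection.)

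It remains to check the two convergence assertions. The partial sums of $\sum_n a_nb_n^{1/2}/(z-t_n)$ are the partial sums of the orthonormal expansion of $M/F$ in $\Hg(T,b)$, hence converge to $M/F$ in the norm of $\Hg(T,b)$; applying the bounded map $f\mapsto Ff$ shows that the partial sums of $\sum_n c_nF(z)/(z-t_n)$ converge to $M$ in the norm of $\Hg$. For the locally uniform convergence of $\sum_n c_n/(z-t_n)$, fix a compact set $K\subset\mathbb{C}\setminus T$ and pick $R$ with $K\subset\{|z|\le R\}$. Since $F$ is entire and vanishes on $T$, the set $T$ is discrete, so only finitely many $t_n$ satisfy $|t_n|\le 2R$, and for those $|z-t_n|\ge\dist(K,T)>0$ on $K$; for $|t_n|>2R$ one has $|z-t_n|\ge|t_n|/2$ on $K$, hence $b_n/|z-t_n|^2\lesssim b_n/(1+|t_n|^2)$. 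Summing and invoking the admissibility condition \eqref{adm} gives $\sup_{z\in K}\sum_n b_n/|z-t_n|^2<+\infty$, and then Cauchy--Schwarz,
\[
\sum_n\frac{|c_n|}{|z-t_n|}\le\Big(\sum_n|a_n|^2\Big)^{1/2}\Big(\sum_n\frac{b_n}{|z-t_n|^2}\Big)^{1/2},
\]
bounds the left-hand side uniformly on $K$; thus $\sum_n c_n/(z-t_n)$ converges absolutely and uniformly on $K$.

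Since the proposition is essentially a transcription of the parametrization through the isomorphism $f\mapsto Ff$, there is no serious obstacle. The only point requiring a genuine, if routine, argument is the locally uniform convergence of $\sum_n c_n/(z-t_n)$ off $T$, which I handle with the Cauchy--Schwarz estimate above together with the admissibility condition \eqref{adm}; everything else is definition-chasing.
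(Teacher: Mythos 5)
Your proof is correct and follows exactly the route the paper intends: the paper offers no separate argument for Proposition \ref{prop}, merely asserting that it follows from the parametrization \eqref{param} via the isomorphism $f\mapsto Ff$, which is precisely the definition-chasing you carry out. Your Cauchy--Schwarz verification of the locally uniform convergence using the admissibility condition \eqref{adm} is the same estimate the paper already made when deriving \eqref{pointwise} and \eqref{adm} in Section 1.2, so nothing here diverges from the authors' reasoning.
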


\begin{proof} (of Theorem \ref{main1}).
We want to construct a generating function $G$ of an exact system of
reproducing kernels such that
$\frac{G(z)}{F(z)}=\sum_n\frac{d_n}{z-t_n}$, where the series 
converges uniformly on compact subsets of $\mathbb{C}\setminus T$. 
If such a function $G$ is
constructed, we can take $S\equiv 1$ in \eqref{main_eq} and the
function 
$$
h = \sum_n b_n^{1/2}\cdot \frac{|F'(t_n)|}{\overline{F'(t_n)}}
\cdot \frac{k_{t_n}}{\|k_{t_n}\|_{\Hg}}
$$ 
is orthogonal to all $\frac{G(z)}{z-\lambda_n}$.

We choose coefficients $c_n$ so that $c_nt_n>0$,
\begin{equation}
\label{1and2}
    (1)\,\ \sum_n \frac{|c_n|^2}{b_n}< +\infty; \qquad
    (2)\,\ \sum_n \frac{(c_n t_n)^2}{b_n}=+\infty,
\end{equation}
and put $G(z)=F(z)\sum_n\frac{c_nt_n}{z-t_n}$. It follows from
\eqref{1and2} that $G\notin\Hg$, but
$\frac{G(z)}{z-\lambda_n}\in\Hg$ where $\lambda_n$ are zeros of $G$.
Without loss of generality we can assume that $G$ has no multiple
roots because it is enough to change one coefficient $c_0$ a little.
Indeed, we can write
$$
G(z)=c_0 \frac{F(z)}{z-t_0}+H(z) =c_0 F_1(z)+H(z),
$$
and the functions $F_1$ and $H$ have no common zeros. So, $G$ and
$G'$ have common zeros only at the points $z$ where
$F_1'(z)H(z)-H'(z)F_1(z)=0$ and $c_0F_1(z)+H(z)=0$, but this is
possible only for a countable set of coefficients $c_0$.

To prove the completeness of $\{k_{\lambda_n}\}$ we need to show
that there is no entire function $T$ such that $TG\in\Hg$. To prove this, we
introduce some additional requirements on $c_n$. Choose  a
subsequence of indexes $\{n_k\}$ so rare that
$|t_{n_{k+1}}|>2|t_{n_k}|$ and the disks
$D^1_k=\{z:|z-t_{n_k}|\leq\frac{t_{n_k}}{10}\}$ are pairwise
disjoint. For other indices we choose a sequence of positive numbers
$h_n$ with $\sum_n h_n<1$ such that the disks $D^2_{n}=\{|z-t_n|\leq
h_n\}, n\notin\{n_k\}$, are also pairwise disjoint. Now assume that
additionally to (\ref{1and2}) we have
$$
    \sum_k  c_{n_k} t_{n_k} <+\infty.
$$
This will be achieved if we assume $\sum_k |t_{n_k}|^{-2}<+\infty$,
$\sum_k b_{n_k}^{1/2}<+\infty$ and put $c_{n_k} := (b_{n_k})^{1/2}
t_{n_k}^{-1}$.
Now we make $c_n$ for $n\not\in\{n_k\}$ so small that
$$
\sum_{n\notin \{n_k\}}  \frac{|c_n t_n^2|}{h_n}
<\frac{1}{10} \sum_k  c_{n_k} t_{n_k}, \qquad
\sum_{n\notin \{n_k\}}   c_n t_n
<\frac{1}{10} \sum_k  c_{n_k} t_{n_k}.
$$

Assume that $TG\in\Hg$. Since $TG/F$ should be represented as
in \eqref{repr1}, we have
\begin{equation}
\label{12}
   \frac{T(z)G(z)}{F(z)}=\sum_n\frac{c_nt_nT(t_n)}{z-t_n},
   \qquad  \sum_n \frac{|c_nt_nT(t_n)|^2}{b_n}<\infty.
\end{equation}
We can estimate $G(z)\slash F(z)$ for
$z\notin (\cup D^1_k) \cup (\cup D^2_n)$:
\[
\begin{aligned}
  \frac{|G(z)|}{|F(z)|}
& =\Big|\sum_n\frac{c_nt_n}{z-t_n}\Big|\geq
  \frac{1}{|z|} \sum_n c_n t_n
  -\sum_{n\notin\{n_k\}}\frac{|c_nt^2_n|}{|z(z-t_n)|}-
  \sum_{n\in\{n_k\}}\frac{|c_nt^2_n|}{|z(z-t_n)|} \\
& \ge \frac{1}{|z|}\sum_n c_nt_n-\sum_{n\notin\{n_k\}}\frac{|c_n t^2_n|}{|z|h_n}-
  \sum_{n\in\{n_k\},|t_n|<|z|\slash3}\frac{c_n t_n}{2|z|}-10\sum_{n\in\{n_k\},
  |t_n|\geq|z|\slash3}\frac{ c_n t_n}{|z|}.
\end{aligned}
\]
Note that the last sum is $o(|z|^{-1})$ as $|z|\to\infty$. Hence,
for sufficiently large $|z|$
$$
\frac{|G(z)|}{|F(z)|} \ge \frac{1}{4|z|} \sum_k  c_{n_k} t_{n_k}
\gtrsim\frac{1}{|z|}, \qquad z\notin (\cup D^1_k ) \cup (\cup D^2_n).
$$
Using analogous estimates we can show that
$$
\frac{|T(z)G(z)|}{|F(z)|}\lesssim 1, \qquad
z\notin (\cup D^1_k) \cup (\cup D^2_n).
$$
Hence, $|T(z)|\lesssim 1+|z|$ for $z\notin (\cup D^1_k) \cup (\cup
D^2_n)$. By the choice of $t_{n_k}$ and $h_n$ there exist circles
$\Gamma_n =\{z: |z|=r_n \}$ with $r_n \to \infty$, such that
$\Gamma_n\cap \big( (\cup D^1_k) \cup (\cup D^2_n) \big)
=\emptyset$. Therefore, $T(z)=az+b$. But this contradicts
\eqref{12} and (2) in \eqref{1and2} unless $T\equiv 0$.
\end{proof}

\begin{remark}
{\rm Note that, by the choice of the coeficients $d_n = c_n t_n > 0$, all zeros
of the function $G$ constructed in the proof of Theorem \ref{main1}
are real. }
\end{remark}

In the proof of Theorem \ref{main2}  we will use the following lemma.

\begin{lemma}
If $S\in\mathcal{S}$, then $\frac{S(z)-S(w)}{z-w}\in\mathcal{S}$ for
any $w\in \mathbb{C}$. In particular, if $\mathcal{S}$ is of finite
dimension $n+1$, then $\mathcal{S}$ coincides with the set
$\mathcal{P}_{n}$ of all polynomials of degree at most $n$.
\label{lem}
\end{lemma}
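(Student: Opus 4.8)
Recall that $\mathcal{S}$ consists of the entire functions $S$ with $\sum_m|S(t_m)|^2b_m<+\infty$ for which the meromorphic function $GS/F$ coincides with the sum of its principal parts, i.e. $GS/F=\tilde L_S$ where $\tilde L_S(z):=\sum_m\frac{G(t_m)S(t_m)}{F'(t_m)(z-t_m)}$ (cf. \eqref{main_eq} and \eqref{norm}); the norm on $\mathcal{S}$ is $\bigl(\sum_m|S(t_m)|^2b_m\bigr)^{1/2}$. Put $D_wf(z)=\frac{f(z)-f(w)}{z-w}$, which is entire when $f$ is. The first step is the membership condition $\sum_m|D_wS(t_m)|^2b_m<+\infty$: for $t_m\ne w$ we have $D_wS(t_m)=\frac{S(t_m)-S(w)}{t_m-w}$, so this reduces to $\sum_{t_m\ne w}\frac{b_m}{|t_m-w|^2}<+\infty$, which is \eqref{pointwise} when $w\notin T$ and, for arbitrary $w$, follows from \eqref{adm} and the discreteness of $T$ (for large $|t_m|$, $|t_m-w|\asymp 1+|t_m|$; only finitely many $t_m$ lie in a bounded set); the term $t_m=w$, if present, contributes $|S'(w)|^2b_m<\infty$.

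The heart of the first assertion is that \eqref{main_eq} holds for $D_wS$; I would prove this for $w\notin T$ and then let $w\to T$. Fix a zero $\lambda$ of $G$ with $\lambda\ne w$. Since $G/(z-\lambda)\in\Hg$, Proposition \ref{prop} gives $\frac{G(z)}{F(z)(z-\lambda)}=\sum_m\frac{c_m}{z-t_m}$ with $c_m=\frac{G(t_m)}{F'(t_m)(t_m-\lambda)}$ and $\sum_m\frac{|c_m|^2}{b_m}<+\infty$. Multiplying by $\frac{z-\lambda}{z-w}=1+\frac{w-\lambda}{z-w}$ and regrouping the absolutely convergent series via $\frac{1}{(z-w)(z-t_m)}=\frac{1}{t_m-w}\bigl(\frac{1}{z-t_m}-\frac{1}{z-w}\bigr)$ produces
\[
\frac{G(z)}{F(z)(z-w)}=\sum_m\frac{G(t_m)}{F'(t_m)(t_m-w)(z-t_m)}-\frac{(w-\lambda)\mu}{z-w},\qquad \mu:=\sum_m\frac{c_m}{t_m-w},
\]
where $\mu$ converges absolutely by Cauchy--Schwarz (using $\sum\frac{|c_m|^2}{b_m}<+\infty$ and $\sum\frac{b_m}{|t_m-w|^2}<+\infty$). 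Performing the same regrouping on $\tilde L_S(z)/(z-w)$, whose emerging constant $\nu:=\sum_m\frac{G(t_m)S(t_m)}{F'(t_m)(t_m-w)}$ is absolutely convergent (now using $\sum_m|S(t_m)|^2b_m<+\infty$ and $\sum_m\frac{|G(t_m)|^2}{\|k_{t_m}\|^2|t_m-w|^2}<+\infty$, again from $G/(z-\lambda)\in\Hg$), and then forming $\frac{1}{z-w}\bigl(GS/F-S(w)G/F\bigr)$, the two Cauchy-type series fuse into $\tilde L_{D_wS}$ because $\frac{S(t_m)-S(w)}{t_m-w}=D_wS(t_m)$, and one reaches
\[
\frac{G(z)\,D_wS(z)}{F(z)}=\tilde L_{D_wS}(z)+\frac{S(w)(w-\lambda)\mu-\nu}{z-w}.
\]
The left-hand side is holomorphic at $z=w$ (the pole is cancelled by $S(z)-S(w)$, and $G/F$ is holomorphic at $w\notin T$), while $\tilde L_{D_wS}$ has poles only on $T$; hence the residue $S(w)(w-\lambda)\mu-\nu$ vanishes and $GD_wS/F=\tilde L_{D_wS}$, i.e. $D_wS\in\mathcal{S}$. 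For $w\in T$ I would take $w_k\to w$ with $w_k\notin T$: then $D_{w_k}S\in\mathcal{S}$, $GD_{w_k}S/F\to GD_wS/F$ locally uniformly, and $\tilde L_{D_{w_k}S}\to\tilde L_{D_wS}$ pointwise (Cauchy--Schwarz and dominated convergence give $\sum_m|D_{w_k}S(t_m)-D_wS(t_m)|^2b_m\to0$), so the limit of the identity $GD_{w_k}S/F=\tilde L_{D_{w_k}S}$ yields $D_wS\in\mathcal{S}$.

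For the second assertion let $\dim\mathcal{S}=n+1<\infty$ and pick $S\in\mathcal{S}\setminus\{0\}$. Fixing any $w$, the first part gives $D_w^jS\in\mathcal{S}$ for all $j$, so $S,D_wS,\dots,D_w^{n+1}S$ are linearly dependent, say $\sum_{j=0}^J\alpha_jD_w^jS=0$ with $\alpha_J\ne0$, $J\le n+1$. Multiplying by $(z-w)^J$ and using the easy identity $(z-w)^jD_w^jS(z)=S(z)-\tau_{j-1}(z)$, with $\tau_{j-1}$ the degree-$(j-1)$ Taylor polynomial of $S$ at $w$ (and $\tau_{-1}=0$), turns the relation into $S(z)Q_1(z)=Q_2(z)$, where $Q_1(z)=\sum_{j=0}^J\alpha_j(z-w)^{J-j}$ has $Q_1(w)=\alpha_J\ne0$ (so $Q_1\not\equiv0$) and $Q_2$ is a polynomial; thus $S=Q_2/Q_1$ is rational, and being entire it is a polynomial. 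If $d=\deg S$ then $S,D_wS,\dots,D_w^dS$ have degrees $d,d-1,\dots,0$, hence are $d+1$ linearly independent elements of $\mathcal{S}$, so $d\le n$. Therefore $\mathcal{S}\subseteq\mathcal{P}_n$, and as $\dim\mathcal{P}_n=n+1=\dim\mathcal{S}$ we get $\mathcal{S}=\mathcal{P}_n$.

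I expect the delicate point to be \eqref{main_eq} for $D_wS$: beyond the routine $\ell^2$-bookkeeping needed to justify the rearrangements on compacta off $T$, one must exclude an entire ``polynomial part'' of $GD_wS/F-\tilde L_{D_wS}$, and the trick that avoids growth estimates (or the existence of ``good circles'' on which $|F|$ is large) is to expand $G/(F(z-w))$ through a zero $\lambda$ of $G$, which confines the only possible obstruction to a simple pole at $w$ whose residue is then forced to vanish. The limiting argument for $w\in T$ and the bookkeeping in the second part are then routine.
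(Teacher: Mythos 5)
Your proof is correct and follows essentially the same route as the paper's: both hinge on expanding $G/((z-\lambda_0)F)$ as a Cauchy series with square-summable coefficients (using a zero $\lambda_0$ of $G$) and regrouping partial fractions to exhibit $G\,D_wS/F$ as a series of the form \eqref{main_eq}, the only cosmetic difference being that the paper packages this as a single three-term identity with no pole at $w$, while you track a residual simple pole at $w$ and kill its residue by holomorphy. Your explicit treatment of the case $w\in T$ and of the ``in particular'' clause (iterated difference quotients forcing $S$ to be rational, hence a polynomial, plus a dimension count) fills in details the paper leaves implicit, but the underlying argument is the same.
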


\begin{proof}
Let $\lambda_0$ be a zero of $G$. Then $\frac{G(z)}{z-\lambda_0} \in\Hg$ and
\begin{equation}
\label{main_eq1}
\frac{G(z)}{(z-\lambda_0)F(z)}=\sum_m\frac{G(t_m)}{(t_m-\lambda_0) F'(t_m)(z-t_m)}.
\end{equation}
We have the identity
\[
  \begin{aligned}
  \frac{G(z)(S(z)-S(w))}{F(z)(z-w)}
  & =\frac{1}{z-w} \bigg(\frac{G(z)S(z)}{F(z)}-\frac{G(w)S(w)}{F(w)}\bigg) \\
  & +\frac{(w-\lambda_0)  S(w)}{z-w}\bigg(\frac{G(w)}{(w-\lambda_0) F(w)}-
  \frac{G(z)}{(z-\lambda_0) F(z)}\bigg)  -S(w)\frac{G(z)}{(z-\lambda_0)F(z)}.
  \end{aligned}
\]
By \eqref{main_eq} and \eqref{main_eq1},
\[
\frac{1}{z-w} \bigg(\frac{G(z)S(z)}{F(z)}-\frac{G(w)S(w)}{F(w)}\bigg)
  =\sum_m\frac{a_m}{(t_m-w)\|k_{t_m}\|_{\Hg}}\cdot\frac{ G(t_m)}{t_m-z},
\]
\[
  \frac{1}{z-w}\bigg(\frac{G(z)}{(z-\lambda_0) F(z)}-\frac{G(w)}
  {(w-\lambda_0)F(w)}\bigg)
  =\sum_m\frac{a_m}{(t_m-\lambda_0) (t_m-w) F'(t_m)}
  \cdot\frac{G(t_m)}{t_m-z}.
\]
Thus, we have shown that the function $\frac{G(z)}{F(z)} \cdot
\frac{S(z)-S(w)}{z-w}$ can
be represented as a series in \eqref{main_eq}. Condition \eqref{norm} for
the function $\frac{S(z)-S(w)}{z-w}$ follows from \eqref{adm}.
\end{proof}

\begin{proof} (of Theorem \ref{main2})
Assume that the system $\{\frac{G(z)}{z-\lambda_n}\}$ is not
complete. Fix $h$ as in \eqref{h_rep} which is orthogonal to all
functions $\frac{G(z)}{z-\lambda_n}$ and consider the corresponding
space $\mathcal{S}$. If $\mathcal{S}$ is an
infinite dimensional space then we can find a function
$S\in\mathcal{S}$ with at least $N+2$ zeros $w_1,...,w_{N+1}$
different from the points $\{\lambda_n\}$ and $\{t_n\}$. By Lemma
\ref{lem}
$T(z):=\frac{S(z)}{\prod_{l=1}^{N+1}(z-w_l)}\in\mathcal{S}$.
Moreover the corresponding coefficients from \eqref{main_eq} for $T$
are equal to $\frac{a_m}{\prod_{l=1}^{N+1}(t_m-w_l)}$.

Recall that
$$
f_m(z) := \frac{\|k_{t_m}\|_{\Hg}}{F'(t_m)}\cdot \frac{F(z)}{z-t_m} =
b_m^{1/2} \cdot \frac{|F'(t_m)|}{\overline{F'(t_m)}} \cdot \frac{F(z)}{z-t_m}
$$
is the biorthogonal system to the Riesz basis
$\{k_{t_m}/\|k_{t_m}\|_{\Hg}\}$ and thus also is a Riesz basis.
We have
\[
\begin{aligned}
G(z)T(z) & =F(z)\sum_m\frac{a_m}{\prod_{l=1}^{N+1}(t_m-w_l)}\cdot
\frac{G(t_m)}{\|k_{t_m}\|_{\Hg}(z-t_m)}=\\
& =\sum_m\frac{a_m}{b^{1/2}_m\prod_{l=1}^{N}(t_m-w_l)}
\cdot\frac{G(t_m)}{(t_m-w_{N+1})\|k_{t_m}\|_{\Hg}}
\cdot b_m^{1/2} \frac{F(z)}{z-t_m} =:
\sum_m d_m f_m(z).
\end{aligned}
\]
Note that $\inf_m|b^{1/2}_m\prod_{l=1}^{N}(t_m-w_l)|>0$ and
if we fix a zero $\lambda_0$ of $G$,
then
\[
   \Big|\frac{G(t_m)}{t_m-\lambda_0} \Big|
   \leq \Big\| \frac{G(z)}{z-\lambda_0}\Big\|_{\Hg}
   \cdot\|k_{t_m}\|_{\Hg}.
\]
So the coefficients $\{d_m\}$ are in $\ell^2$ and
$GT\in\Hg$. However, this contradicts the completeness
of the system $\{k_{\lambda_n}\}$.

Finally, assume that $\sum_n b_n = +\infty$.
If $\mathcal{S}$ is a finite-dimensional space,
then it follows from Lemma \ref{lem} that $\mathcal{S}$
is the space of polynomials $\mathcal{P}_n$ for some $n$, which
can not be true since $\sum_n |S(t_n)|^2 b_n = +\infty$
for any $S\in\mathcal{S}$. Thus, the biorthogonal system is complete.
\end{proof}

At the end of the section we prove Example \ref{b_n_infty_example}.

\begin{proof}
We will construct a space of the form $\mathcal{H}(\mathbb{Z},b)$, that is,
$t_n=n$, $n\in\mathbb{Z}$. The corresponding generating
function is $F(z)=\sin(\pi z)$.
Put
$$
S(z)=\prod_{k=1}^{\infty}\bigg(1-\frac{z^2}{(2^{k}+1\slash2)^2}\bigg),
\qquad G(z)=\cos(\pi z)\slash S(z).
$$
Then
\begin{equation}
    \frac{G(z)}{\sin(\pi
    z)}=\sum_{n=-\infty}^{+\infty}\frac{S^{-1}(n)}{\pi(z-n)}.
\label{S}
\end{equation}
Indeed, the difference between the left-hand side and the right-hand
side in (\ref{S}) should be an entire function of exponential type.
Since $\lim_{|z|\to\infty}\frac{G(z)}{\sin \pi z}=0$ along any
non-horizontal ray in $\mathbb{C}^+$ or $\mathbb{C}^-$, this
difference is zero.

Put $b_n=|S(n)|^{-2}$ for $n\neq \pm 2^k$ and $b_n=1$ otherwise.
We consider the space $\Hg\in\mathfrak{R}$
corresponding to $\mathcal{H}(\mathbb{Z},b)$. First of all we
want to show that $G$ is the generating function of an exact
system in $\mathcal{H}$. From \eqref{S} we
conclude that $\frac{G(z)}{z-\lambda_0}\in \mathcal{H}$
for any zero $\lambda_0$ of $G$. We need
to show that there is no nonzero entire $T$ such that
$TG\in\mathcal{H}(\mathbb{Z},b)$. If it exists then
\begin{equation}
    \frac{G(z)T(z)}{\sin(\pi z)}=
    \sum_{n=-\infty}^{\infty}\frac{S^{-1}(n)T(n)}{\pi(z-n)}, \qquad
    \sum_{n}\frac{|S^{-1}(n)T(n)|^2}{b_n}<+\infty.
\label{S1}
\end{equation}
It follows from (\ref{S1}) that $T$ is of zero exponential type, and,
at the same time, $\sum_k |T(2k+1)|^2<+\infty$, which implies $T\equiv 0$
(see, e.g., \cite[Lecture 21]{levin}).

By construction we automatically get $\sum_n b_n=+\infty$ and now it
remains to show that there exists a nonzero $S_1\in\mathcal{S}$. Let
$$
S_1(z)=\prod_{k=2}^{\infty}\bigg(1-\frac{z^2}{(2^{k}+\delta_k)^2}\bigg)
$$
and choose $\delta_k\in(0,1)$ so small that
$\sum_{k=1}^\infty|S_1(2^k)|^2<+\infty$. By a straightforward estimation
we get $|S_1(n)|\lesssim |S(n)|$. So, $\sum_n|S_1(n)|^2 b_n<+\infty$.
On the other hand,
$\lim_{y\rightarrow\pm\infty}\frac{|S_1(iy)|}{|S(iy)|}=0$ and we
have representation $\eqref{main_eq}$ with condition $\eqref{norm}$
for $S_1$.

Since $S_1$ is not a polynomial, we see that $\mathcal{S}$
is infinitely-dimensional. Thus, in this case the orthogonal
complement to the biorthogonal system is infinitely-dimensional.
\end{proof}


\section{Size of the orthogonal complement to a biorthogonal system}

In this section we will study in more detail the space $\mathcal{S}$
whose elements parameterize functions orthogonal to the biorthogonal
system via formula \eqref{main_eq}. First of all note that for any
$k\in\mathbb{N}_0$ it is possible that $\mathcal{S}$ coincides with
the set $\mathcal{P}_k$ of polynomials of degree at most $k$. On the
other hand, as we have seen in the proof of Example
\ref{b_n_infty_example}, it is possible that $\mathcal{S}$ is an
infinite-dimensional space. We introduce therefore the following
notion of the size of the orthogonal complement to the biorthogonal
system.

\begin{definition}
Let $\{k_\lambda\}$ be an exact system of reproducing kernels in a
space $\mathcal{H} \in\mathfrak{R}$ with the generating function $G$,
and let $\mathcal{S}$ be the corresponding space parametrizing the
orthogonal complement. Let $M$ be a positive increasing function on
$\mathbb{R}_+$. We say that the orthogonal complement to the
biorthogonal system has size $M$ if there exists $S\in\mathcal{S}$
such that, for some $y_0>0$,
\[
\log|S(iy)| \ge M(|y|), \qquad |y|>y_0.
\]
\end{definition}

From now on we will consider only the situation
when $T=\{t_n\} \subset \mathbb{R}$.
As we have mentioned in Introduction, this case corresponds
to de Branges spaces. Thus we assume that $\mathcal{H} =
\mathcal{H}(E)$ and the generating function $F$
of the sequence $T$ is given by $F=\frac{E+E^*}{2}$.
Our first theorem shows that under some mild restrictions on $t_n$
any function in $\mathcal{S}$ is of zero exponential type, and so,
for any $\varepsilon>0$,
the orthogonal complement can not have the size $M(r) =\varepsilon r$.

\begin{theorem}
\label{exp-type} Let $t_n\in \mathbb{R}$.
If $G$ is the generating function of an exact system
of reproducing kernels and $\mathcal{S}$ is the corresponding space,
then any $S\in\mathcal{S}$ is of zero exponential type.
\end{theorem}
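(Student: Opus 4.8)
The plan is to exploit the two-sided control on $|G/F|$ away from the poles that is already implicit in the structure, combined with the fact that $T \subset \mathbb{R}$ forces $F = (E+E^*)/2$ to have only real zeros with good spacing on average, so that $F$ (hence $G$) grows at most like the de Branges space axioms allow off the real line. Concretely, fix $S \in \mathcal{S}$. Write the defining identity \eqref{main_eq}:
\[
\frac{G(z)S(z)}{F(z)} = \sum_m \frac{a_m}{\|k_{t_m}\|_\Hg}\cdot\frac{G(t_m)}{t_m - z},
\qquad \{a_m\}\in\ell^2,\ \sum_m |S(t_m)|^2 b_m < \infty.
\]
First I would show that the meromorphic function $L(z) := G(z)S(z)/F(z)$ is $O(1/|z|)$ away from small disks around the $t_m$, using the Cauchy--Schwarz bound
\[
\Big|\sum_m \frac{a_m}{\|k_{t_m}\|_\Hg}\cdot\frac{G(t_m)}{t_m-z}\Big|
\le \Big(\sum_m |a_m|^2\Big)^{1/2}\Big(\sum_m \frac{\|k_{t_m}\|_\Hg^2}{|F'(t_m)|^2\,|t_m-z|^2}\Big)^{1/2}\cdot\big(\text{control via }\eqref{pointwise}\big),
\]
where the admissibility \eqref{adm} and \eqref{pointwise} give a uniform $O(1/\dist(z,T))$-type estimate on circles $\Gamma_r = \{|z| = r\}$ avoiding $T$; since $T \subset \mathbb{R}$ one can always choose $r_n \to \infty$ with $\Gamma_{r_n}$ at distance $\gtrsim 1$ from $T$ except near the real axis, and handle the real axis separately by a normal-family / subharmonicity argument. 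This yields $|G(z)S(z)/F(z)| \lesssim 1$ on a sequence of expanding circles (in fact off a thin exceptional set).

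Next I would bring in the de Branges structure: since $\Hg = \mathcal{H}(E)$, every $h \in \Hg$ satisfies $|h(z)| \le \|h\|_\Hg \|k_z\|_\Hg$ and $\|k_z\|_\Hg$ grows at most like a fixed power of $|E(z)|$; in particular $F = (E+E^*)/2$ and each $G/(z-\lambda_0) \in \Hg$ are of (at most) the exponential type of $E$, and $F$ has a genuine lower bound $|F(z)| \gtrsim e^{-\eps|z|}$ off horizontal strips for every $\eps > 0$ when $E$ has zero type — but in general $E$ may have positive type, so the honest statement is that $|G(z)| \lesssim |F(z)|\cdot|z|\cdot$(something subexponential) off the exceptional set, hence $|S(z)| \lesssim |z|\cdot\frac{|F(z)|}{|G(z)|}$. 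Combining with the lower bound for $|G/F|$ from the previous paragraph gives $\log|S(z)| \le o(|z|)$ on the circles $\Gamma_{r_n}$. An application of the Phragmén--Lindelöf principle (or simply the fact that an entire function bounded by $e^{o(r)}$ on a sequence of circles of radii tending to infinity is itself of order $\le 1$, minimal type — using that $S$ is entire and the maximum modulus is attained on these circles) then forces $S$ to be of zero exponential type.

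The main obstacle I expect is the estimate on $|G/F|$ near the real axis, where the circles $\Gamma_r$ unavoidably pass close to the (real) points $t_m$: the crude bound $O(1/\dist(z,T))$ blows up there, so one cannot simply integrate over the whole circle. The fix is to run the Phragmén--Lindelöf / Carleman-type argument in the upper and lower half-planes separately, where $G/F$ and $F/G$ are genuinely bounded by the de Branges estimates, controlling $\log|S|$ on the two rays $\{iy : y > 0\}$ and $\{iy : y < 0\}$ and on a sparse set of vertical segments, and then invoking that an entire function of exponential type whose type is zero along the imaginary axis and which is bounded polynomially (via the $\ell^2$ condition \eqref{norm} giving $\sum |S(t_m)|^2 b_m < \infty$, which with the lower bound on $b_m$ in each compact window limits the growth of $S$ on $\mathbb{R}$) is of zero exponential type overall. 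The interplay of the real-axis data $\sum_m |S(t_m)|^2 b_m < \infty$ with the imaginary-axis growth — a Cartwright-class type argument, cf. \cite{levin} — is where the real work lies.
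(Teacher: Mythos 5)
There is a genuine gap, and it sits exactly where you flag ``the real work'': your argument needs a lower bound on $|G(z)/F(z)|$ of the form $e^{-o(|z|)}$ (off the exceptional set, or at least along the imaginary axis), and you have no mechanism to produce it. The claim that ``$G/F$ and $F/G$ are genuinely bounded by the de Branges estimates'' in the half-planes is false: $G/F$ is a Cauchy transform $\sum_n d_n/(z-t_n)$ and, as Proposition \ref{big_size} of the paper shows, $|F(iy)/G(iy)|$ can grow like $e^{M(|y|)}$ for \emph{any} sublinear $M$; worse, a priori $G/E$ could carry a singular inner factor $e^{2iaz}$, in which case $|G(iy)/F(iy)|$ would decay like $e^{-ay}$ and your bound $|S(z)|\lesssim |z|\,|F(z)|/|G(z)|$ would only give positive exponential type. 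Ruling out that singular factor is a genuine step requiring the hypothesis that the zeros of $G$ form a uniqueness set (the paper does it by observing that otherwise $e^{-iaz}\frac{\sin az}{z}G$ would lie in $\mathcal{H}(E)$ and vanish on that set); nothing in your Cauchy--Schwarz/\eqref{pointwise} machinery sees this. The second gap is at the end: $\log|S(iy)|=o(|y|)$ on the imaginary axis (even on both rays) does not force zero exponential type for an entire function without an a priori bounded-type or Cartwright-class hypothesis ($e^{z^2}$ is bounded on $i\mathbb{R}$), and your fallback --- controlling $S$ on $\mathbb{R}$ through $\sum_m|S(t_m)|^2b_m<\infty$ --- gives nothing when $b_m$ decays rapidly, which is precisely the regime where the theorem is interesting.

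The paper's route supplies both missing ingredients at once by working with bounded type rather than pointwise growth: it shows that $f=GS/E=\sum_m v_m(1+\Theta)t_m/(t_m-z)$ with $\{v_m\}\in\ell^1$ is in the Smirnov class in $\mathbb{C}^+$ (an $\ell^1$ Herglotz-sum observation), that $G/E$ has no singular factor $e^{2iaz}$ (the uniqueness-set trick above), hence $S=f\,(G/E)^{-1}$ is in the Smirnov class in both half-planes, and concludes by Krein's theorem. If you want to salvage your growth-estimate plan, you would in effect have to reprove these Smirnov-class facts, at which point Krein's theorem finishes immediately and the circle estimates become unnecessary.
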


From this we have an immediate corollary.

\begin{corollary}
\label{linear} Let $t_n \in\mathbb{R}$. Then
for any $\varepsilon>0$ the orthogonal complement of a biorthogonal
system can not have the size $M(r) =\varepsilon r$.
\end{corollary}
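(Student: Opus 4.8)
The plan is to read this off directly from Theorem \ref{exp-type}. Suppose, towards a contradiction, that for some $\varepsilon>0$ the orthogonal complement of a biorthogonal system has size $M(r)=\varepsilon r$. By the definition of size there are a function $S\in\mathcal{S}$ and a number $y_0>0$ with $\log|S(iy)|\ge\varepsilon|y|$ for all $|y|>y_0$. First I would observe that this lower bound along the imaginary axis already pins down the growth of $S$: writing $\mathcal{M}_S(r)=\max_{|z|=r}|S(z)|$ we have $\mathcal{M}_S(r)\ge|S(ir)|\ge e^{\varepsilon r}$ for $r>y_0$, so
\[
\limsup_{r\to\infty}\frac{\log\mathcal{M}_S(r)}{r}\ge\varepsilon>0,
\]
that is, $S$ is an entire function of exponential type at least $\varepsilon$.

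This contradicts Theorem \ref{exp-type}, which asserts that every $S\in\mathcal{S}$ is of zero exponential type. Hence no such $S$ can exist, and the orthogonal complement cannot have size $\varepsilon r$. There is essentially no obstacle in this argument: all of the substantive work is contained in Theorem \ref{exp-type}, which has been established above. The only point worth recording is the elementary fact that a pointwise lower bound $|S(iy)|\ge e^{\varepsilon|y|}$ on the imaginary axis forces strictly positive exponential type, since the exponential type of an entire function dominates its growth indicator in every direction, in particular along the positive and negative imaginary semi-axes.
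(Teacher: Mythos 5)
Your proof is correct and is exactly the argument the paper intends: the corollary is stated as an immediate consequence of Theorem \ref{exp-type}, since a bound $\log|S(iy)|\ge\varepsilon|y|$ forces $S$ to have exponential type at least $\varepsilon>0$, contradicting the zero-type conclusion of that theorem. Nothing further is needed.
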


In what follows we will use essentially the inner-outer factorization
of $H^2$ functions (see, e.g., \cite{ko, nk}).
Recall that a function $f$ is said to be in the {\it Smirnov class} if
$f =g/h$, where $g,h$ are bounded analytic functions in $\mathbb{C}^+$
(or functions in $H^p$) and $h$ is outer.

\begin{proof} (of Theorem \ref{exp-type}).
If $\lambda_0$ is a zero of $G$, then $\frac{G}{z-\lambda_0} \in
\mathcal{H}(E)$. Hence, $h:=\frac{G}{(z-\lambda_0)E} \in H^2$.
Let us show that $h$ has no singular inner factor of the form $e^{2iaz}$,
$a>0$ (note that $h$ has no other singular factors since it is analytic
on $\mathbb{R}$). Indeed, if $e^{-2iaz}h \in H^2$, then put
$$
H(z) = e^{-iaz} \frac{\sin az}{z} G(z).
$$
Then $H/E\in H^2 $ and also $H^*/E \in H^2$, so $H\in \mathcal{H}(E)$,
which contradicts the fact that $\{\lambda: G(\lambda)=0\}$
is a uniqueness set for $\mathcal{H}(E)$.

Consider the inner function $\Theta = E^*/E$. Then
$2F = E(1+\Theta)$ and we have
$$
    \frac{G(z)S(z)}{E(z)}=\sum_m\frac{a_m}{\|k_{t_m}\|_\Hg}
    \cdot\frac{1+\Theta(z)}{t_m-z}\,G(t_m)=:f.
$$
We show that the right-hand side function $f$ is in the Smirnov class in
$\mathbb{C}^+$. First of all note that
if $v_m\ge 0$ and $\{v_m\} \in \ell^1$, then
$$
\ima \sum_m \frac{v_m}{t_m-z} >0, \qquad z=x+iy \in \mathbb{C^+},
$$
and so this sum is in the Smirnov class.
The same is obviously true also for an arbitrary sequence
$\{v_m\} \in \ell^1$.

Since $\frac{G}{z-\lambda_0} \in \mathcal{H}(E)$
we have $\{\|k_{t_m}\|_\Hg^{-1} t_m^{-1} G(t_m)\} \in \ell^2$.
Hence,
$\{v_m\}\in\ell^1$ where
$$
v_m = \frac{a_m}{\|k_{t_m}\|_\Hg} \cdot\frac{G(t_m)}{t_m},
$$
and we have
$$
\frac{f(z)}{1+\Theta(z)}
= \sum_m v_m \frac{t_m}{t_m-z} = \sum_m v_m +z\sum_m \frac{v_m}{t_m -z}.
$$
Hence $f$ is in the Smirnov class.

Thus $S = f \Big(\frac{G}{E}\Big)^{-1}$
is a ratio of two functions of bounded type,
and so is a function of bounded type (zeros of Blaschke products cancel).
Moreover, since $S$ is analytic on $\mathbb{R}$
it is in the Smirnov class unless it has a factor of the form $e^{-2iaz}$
in its canonical inner-outer factorization. However, it can not happen,
since, as we have seen,  $G/E$ has no singular inner factor.

By completely identical arguments, $S$ is in the Smirnov class
in the lower half-plane $\mathbb C^-$.
Since $S$ is in the Smirnov class both in
$\mathbb C^+$ and in $\mathbb C^-$, it is of zero exponential type
by M.G. Krein's theorem (see, e.g., \cite[Chapter I, Section 6]{hj}).
\end{proof}

As we have seen in Corollary \ref{linear},
the linear growth of the function $M$ (which determines
the size of the orthogonal complement) is not possible.
The following proposition, which applies to the case
$\mathcal{H}(\mathbb{Z},b)$ (that is, $t_n=n$, $n\in \mathbb{Z}$),
provides a converse result: for any slower growth,
the size $M$ for the orthogonal complement
may be achieved for some choice of $b_n$.

\begin{proposition}
Let $M(r)\slash r$ be a decreasing function which tends to zero when
$r\rightarrow+\infty$. Then there exist a sequence $b_n$ and an exact
system $\{k_\lambda\}$ in the space of entire functions $\Hg$
$($corresponding to the space $\mathcal{H}(\mathbb{Z},b)$$)$ such that the
orthogonal complement to the biorthogonal system has size $M$.
\label{big_size}
\end{proposition}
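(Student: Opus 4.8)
The plan is to work in the concrete model $\Hg(\mathbb{Z},b)$, where $F(z)=\sin(\pi z)$, and to engineer both the weight sequence $b_n$ and the generating function $G$ simultaneously so that the canonical witness $S\equiv 1$ is \emph{not} the only element of $\mathcal{S}$: I want to produce a specific nonzero function $S\in\mathcal{S}$ with $\log|S(iy)|\ge M(|y|)$ for large $|y|$, and I want $b_n$ small enough that the membership condition \eqref{norm}, namely $\sum_n|S(n)|^2 b_n<\infty$, holds, while still large enough that the generating function $G$ I construct corresponds to an \emph{exact} system (i.e.\ no nonzero entire $T$ with $TG\in\Hg$). The function $S$ will be built as a canonical product $S(z)=\prod_k\bigl(1-\frac{z^2}{\mu_k^2}\bigr)$ over a lacunary-type real sequence $\mu_k\to\infty$; the growth of such a product along the imaginary axis is governed by its counting function, and by choosing the density of $\{\mu_k\}$ appropriately one can arrange $\log|S(iy)|\asymp M(|y|)$. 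The hypothesis that $M(r)/r$ is decreasing to $0$ is exactly what guarantees that this prescribed growth is subexponential and is realizable by a genuine (zero exponential type) canonical product whose zero-counting function $n(r)$ satisfies $\int_0^r \frac{n(t)}{t}\,dt\asymp M(r)$; concretely one takes $n(r)\asymp r M'(r)$ or, more robustly, chooses $\mu_k$ so that $\sum_{\mu_k\le r}1$ has the right order.

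The next step is to choose $b_n$. Having fixed $S$, I set $b_n := |S(n)|^{-2}\varepsilon_n$ for a positive sequence $\varepsilon_n\in\ell^1$ at most of the "bad" indices near the zeros $\mu_k$ (where $S(n)$ could be small), and more simply $b_n\asymp |S(n)|^{-2}$ elsewhere, exactly as in the proof of Example \ref{b_n_infty_example}; this forces $\sum_n|S(n)|^2 b_n<\infty$, so $S$ satisfies \eqref{norm}. The admissibility condition \eqref{adm} for the pair $(\mathbb{Z},b)$, i.e.\ $\sum_n \frac{b_n}{1+n^2}<\infty$, must be checked; since $|S(n)|\to\infty$ at least polynomially (it is a nonconstant entire function of genus $\le 2$ with real zeros), $b_n$ decays and \eqref{adm} holds. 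Then $\Hg:=F\Hg(\mathbb{Z},b)$ is a bona fide space in $\mathfrak{R}$.

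Now I construct $G$. Following the template of Example \ref{b_n_infty_example} and the proof of Theorem \ref{main1}, I want $\frac{G(z)}{\sin\pi z}=\sum_n \frac{d_n}{z-n}$ with $\{d_n/b_n^{1/2}\}\notin\ell^2$ but $\frac{G(z)}{z-\lambda_n}\in\Hg$ for the zeros $\lambda_n$ of $G$, and with the key feature that taking $S\equiv 1$ in \eqref{main_eq} is legitimate, so that the chosen $S$ above indeed lies in $\mathcal{S}$ via the identity $\frac{G(z)S(z)}{F(z)}=\sum_m \frac{a_m}{\|k_{t_m}\|}\cdot\frac{G(t_m)}{t_m-z}$. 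A clean choice is $G(z):=\cos(\pi z)/R(z)$ for a suitable auxiliary canonical product $R$ with real zeros chosen off $\mathbb{Z}$ and off the zeros of $S$, so that (as in \eqref{S}) $\frac{G(z)}{\sin\pi z}=\sum_n \frac{R(n)^{-1}}{\pi(z-n)}$; the partial-fraction identity is justified, as in Example \ref{b_n_infty_example}, because the difference of the two sides is an entire function of exponential type tending to $0$ along non-horizontal rays, hence vanishes. One then needs $R$ chosen so that (i) $\frac{G}{z-\lambda_0}\in\Hg$, i.e.\ $\sum_n \frac{|R(n)|^{-2}}{n^2 b_n}<\infty$, equivalently $\sum_n \frac{|S(n)|^2}{n^2|R(n)|^2}<\infty$ — easy if $|R(n)|\gtrsim|S(n)|$ — and (ii) \emph{exactness}: if $TG\in\Hg$ then $\sum_n \frac{|R(n)|^{-2}|T(n)|^2}{b_n}=\sum_n \frac{|S(n)|^2|T(n)|^2}{|R(n)|^2}<\infty$, and since $G/\sin\pi z\to 0$ on non-horizontal rays forces $T$ to be of zero exponential type, a Cartwright-class / $\ell^2$-sampling argument on a subsequence of integers (exactly the citation to \cite[Lecture 21]{levin} used in Example \ref{b_n_infty_example}) gives $T\equiv 0$. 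To make (ii) go through I will, as in Theorem \ref{main1} and Example \ref{b_n_infty_example}, reserve a lacunary subsequence of integers on which $b_n$ is reset to $1$ (so $R(n)^{-1}$ is not too small there), guaranteeing $\sum_k|T(n_k)|^2<\infty$ along that subsequence and hence $T\equiv 0$.

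The main obstacle is the simultaneous balancing act in steps two and three: $b_n$ must be small enough that my prescribed large $S$ still satisfies $\sum_n|S(n)|^2 b_n<\infty$, yet not so small that exactness fails — i.e.\ the auxiliary $R$ defining $G$ must thread between "$|R(n)|$ large enough for $G/(z-\lambda_0)\in\Hg$" and "$|R(n)|$ not so large that some polynomial-growth $T$ slips into $\mathcal{S}$ with $TG\in\Hg$". Reserving a lacunary subsequence where $b_n=1$, as in Example \ref{b_n_infty_example}, is the device that resolves this, but one must check the lacunary indices can be chosen disjoint from the zeros of $S$ and of $R$ and sparse enough not to disturb the growth estimate $\log|S(iy)|\ge M(|y|)$; once that is arranged, the three conditions (admissibility, $S\in\mathcal{S}$, exactness) hold together and the proposition follows.
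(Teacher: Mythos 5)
Your overall architecture does match the paper's: a canonical product $S(z)=\prod_k\bigl(1-\frac{z^2}{\mu_k^2}\bigr)$ whose zero-counting function is tuned to $M$ (the paper ``atomizes'' $M$ into an integer-valued increasing $\mu$ supported on half-integers and gets $\log|S(iy)|=2y^2\int_0^\infty\frac{\mu(t)\,dt}{t(y^2+t^2)}\gtrsim\mu(y)$), weights $b_n$ comparable to a negative power of $|S(n)|$, generating function $G=\cos(\pi z)/S(z)$, and exactness from the fact that an entire $T$ of zero exponential type with $\ell^2$ samples on $\mathbb{Z}$ vanishes identically.

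However, there is a genuine gap at exactly the point you identify as the ``main obstacle'', and your resolution of it is incorrect. If $b_n\asymp|S(n)|^{-2}$ on infinitely many indices (your ``elsewhere''), then $\sum_n|S(n)|^2b_n\asymp\sum_n 1=+\infty$, so the proposed witness $S$ does \emph{not} satisfy the membership condition \eqref{norm}; the assertion ``this forces $\sum_n|S(n)|^2b_n<\infty$'' is false. The paper's fix is not to shrink $b_n$ further but to take as witness a function that is smaller than $S$ on $\mathbb{Z}$ by an $\ell^2$ factor: since $\mu(0)=0$, $\mu(1)=1$ forces $S(1/2)=0$, the function $(z-1/2)^{-1}S(z)$ is entire, its membership sum gains the factor $(n-1/2)^{-2}$ and converges, and the growth on $i\mathbb{R}$ loses only a logarithm, which is absorbed into the atomization of $M$. (The same mechanism is hidden in Example \ref{b_n_infty_example}: the witness $S_1$ there omits the $k=1$ factor of $S$, so $|S_1(n)|\lesssim|S(n)|/n^2$.) Two secondary problems: resetting $b_n=1$ on a lacunary subsequence is imported from Example \ref{b_n_infty_example}, where its sole purpose is to force $\sum_nb_n=+\infty$; here it is unnecessary, and your claim that it ``guarantees $\sum_k|T(n_k)|^2<\infty$ along that subsequence'' is backwards, since enlarging $b_{n_k}$ \emph{weakens} the constraint $\sum_k|d_{n_k}T(n_k)|^2/b_{n_k}<\infty$ at those indices — exactness must come from the unmodified indices. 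Finally, introducing an auxiliary product $R\ne S$ creates the unaddressed obligation to justify the partial-fraction identity for $\cos(\pi z)S(z)/(R(z)\sin(\pi z))$; taking $R=S$, as the paper does, reduces everything to the trivial expansion of $\cot(\pi z)$ divided by $(z-1/2)$.
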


\begin{proof} First of all, we "atomize" $ $ function $M$. There
exists an increasing integer-valued function $\mu$ with jumps at
some half-integer points such that $M-\mu\in L^{\infty}$. We will
assume that $\mu(0)=0$, $\mu(1)=1$. Let $S(z)=\prod_{t\in\supp
d\mu}\big(1-\frac{z^2}{t^2}\big)$. We want to estimate $|S(iy)|$ for
large $|y|$:
\[
\begin{aligned}
  \log|S(iy)| & =\int_0^\infty\log\bigg(1+\frac{y^2}{t^2}\bigg)d\mu(t)=2y^2
  \int_0^\infty\frac{\mu(t)dt}{t(y^2+t^2)} \\
  & \geq
  y\Big( \inf_{t\in[1\slash2,y]} \frac{\mu(t)}{t}\Big) \cdot
  \int_{1\slash2}^y  \frac{y}{y^2+t^2}dt\gtrsim \mu(y), \qquad |y|\to\infty.
  \end{aligned}
\]
Now we put $G(z)=\cos(\pi z)\slash S(z)$,
$b_n=S^{-1}(n)$. The function $G$ has exponential type
$\pi$ and we can verify that $G$ is the generating function of some
exact system using the same arguments as in the proof of Example
\ref{b_n_infty_example}. Indeed, if $TG \in \Hg$, then $T$ is of
zero exponential type and $\sum_n |T(n)|^2<+\infty$, so $T\equiv 0$.
It only remains to note that $ (z-1/2)^{-1} S(z) \in \mathcal{S}$.
\end{proof}

As we have seen in the proof of Proposition \ref{big_size}  the size
of orthogonal complement to a biorthogonal system corresponds to the
speed of decrease of the coefficients $b_n$. It becomes bigger when
$b_n$ decrease faster. Nevertheless from Theorem \ref{moment} we
see that for extremely small $b_n$ biorthogonal system has finite
codimension. This corresponds to polynomial size
$M(r)= n \log r$.

\begin{proof}(of Theorem \ref{moment}).
Suppose $G$ is the generating function of some exact
system, and let $\lambda_0$ be a zero of $G$. Hence,
\[
    \frac{G(z)}{(z-\lambda_0)F(z)} =\sum_n \frac{c_n}{z-t_n},
    \qquad \sum_n\frac{|c_n|^2}{b_n}<+\infty.
\]
In particular, $\{c_n/b_n\} \in \ell^2(T, b)$. Since the polynomials are
dense in $\ell^2(T,b)$, it follows that there exists $N \in\mathbb{N}\cup\{0\}$
such that $\langle \{c_n/b_n\}, t_n^N \rangle_{\ell^2(T,b)}
=  \sum_n c_n t_n^N\neq 0$.
We take the smallest $N$ with this property.
Let us estimate $G/F$ from below on the imaginary axis.
We have
$$
\sum_n \frac{c_n}{iy-t_n} =
\sum_{|t_n|\le |y|/2} \frac{c_n}{iy-t_n} +
\sum_{|t_n|>|y|/2} \frac{c_n}{iy-t_n}.
$$
Since $\sum_n |c_n|\cdot|t_n|^k<\infty$ for any $k$, we have
$$
\Big|\sum_{|t_n|>|y|/2} \frac{c_n}{iy-t_n}\Big| =O\Big(\frac{1}{y^{N+2}}\Big),
\qquad |y|\to +\infty.
$$
For $|t_n|\le |y|/2$, we have
$$
(iy -t_n)^{-1}=\sum\limits_{k\ge 0}\frac{t_n^k}{(iy)^{k+1}}=
\sum\limits_{k=0}^{N}\frac{t_n^k}{(iy)^{k+1}}+
r_n(y)\frac{t_n^{N+1}}{(iy)^{N+2}},
$$
where $|r_n(y)|\le 2$. Hence,
$$
\begin{aligned}
\sum_{|t_n|\le |y|/2} \frac{c_n}{iy-t_n}
& =
\sum\limits_{k=0}^{N} \frac{1}{(iy)^{k+1}} \sum_n c_n t_n^k
-
\sum\limits_{k=0}^{N}
\frac{1}{(iy)^{k+1}} \sum_{|t_n|> |y|/2}  c_n t_n^k \\
& + \frac{1}{(iy)^{N+2}}\sum_n c_n r_n(y) t_n^{N+1} =
\frac{1}{(iy)^{N+1}} \sum_n  c_n t_n^N + O\Big(\frac{1}{y^{N+2}}\Big).
\end{aligned}
$$
We conclude that $|G(iy)|/|F(iy)| \ge c |y|^{-N-1}$, $|y|\to +\infty$.

Now let $S\in\mathcal S$ and so, for some $\{a_m\} \in\ell^2$,
$$
    \frac{S(z)G(z)}{F(z)} = \sum_m\frac{a_m}{\|k_{t_m}\|_\Hg}
    \cdot\frac{G(t_m)}{t_m-z}.
$$
We have
$$
    \Big|\sum_m\frac{a_m}{\|k_{t_m}\|_\Hg}
    \cdot\frac{G(t_m)}{t_m-iy}\Big|  \le \sum_m |a_m|\frac{|G(t_m)|}
    {\|k_{t_m}\|_\Hg|t_m|}=:A < +\infty,
$$
since $\{\|k_{t_m}\|^{-1}_\Hg |t_m|^{-1} G(t_m) \} \in\ell^2$.
Hence,
$$
|S(iy)|\le A\frac{|F(iy)|}{|G(iy)|} \le C |y|^{N+1}, \qquad |y|\to\infty.
$$
By Theorem \ref{exp-type}, $S$ is of zero exponential type, and thus,
a polynomial of degree at most $N+1$.
Hence, $\mathcal{S}$ has finite dimension.
\end{proof}

Using the known results on density of polynomials we can give more
examples of the situation where all biorthogonal systems have
finite codimension. These examples deal with one-sided sequences
with power growth.

\begin{example}
{\rm  1. Let $t_n=n^{1/\beta}$, $n\in\mathbb{N}$,
and let $b_n= \exp(-A t_n^\alpha)$, where $A,\alpha>0$.
If $\beta\ge 1/2$, then the polynomials are dense
in $\ell^2(T, b)$ if and only if $\alpha \ge 1/2$.
If $\beta <1/2$,
then the polynomials are dense in the space
$\ell^2(T, b)$  for $\alpha>\beta$ and are not dense for
$\alpha<\beta$; if $\alpha = \beta$, then the polynomials
are dense if and only if $A\ge \pi \cot \pi \beta$ (see \cite{bs}
for details).
\smallskip

2. The situation changes when we add a sparse sequence
on the negative semiaxis.
Let
$$
   t_n=
  \begin{cases}
   n^{1/\beta}, & n>0, \ n\in\mathbb{Z}, \\
   -2^{|n|},    & n<0, \ n\in\mathbb{Z}.
  \end{cases}
$$
Let $b_n= \exp(- t_n^\alpha)$, $\alpha>0$.
Using the results of \cite{bbh} one can show that
for $1<\beta<3/2$, the polynomials are dense in $\ell^2(T, b)$
for $\alpha > 1/2$ and not dense
for $\alpha < 1/2$. For $3/2 \le \beta<2$,
the polynomials are dense in $\ell^2(T, b)$
for $\alpha > \beta-1$ and  not dense for $\alpha < \beta-1$. }
\end{example}

\section{Incomplete biorthogonal systems in model subspaces}
\label{model1}

We start with an analogue of Theorem \ref{th-model} for the unit
disc $\mathbb{D}$. Let $\Theta$ be an inner function in $\mathbb{D}$
and, as in the half-plane case,  let $K_\Theta = H^2\ominus\Theta
H^2$. Denote by $\sigma(\Theta)$ the boundary spectrum of $\Theta$.
A point $\zeta \in \mathbb{R}$ is said to be a {\it Carath\'eodory
point} if $\Theta$ has an angular derivative at $\zeta$, that is,
there exist the nontangential limit $\Theta(\zeta)$ with
$|\Theta(\zeta)|=1$, as well as the nontangential limit
$\Theta'(\zeta) = \lim_{z\to\zeta}\frac{\Theta(z)
-\Theta(\zeta)}{z-\zeta}$. By the Ahern--Clark theorem \cite{ac70},
this is equivalent to the fact that the reproducing kernel $k_\zeta$
belongs to $K_\Theta$ and any element in $K_\Theta$ has finite
nontangential boundary value at $\zeta$. Finally, if $\Theta = B
I_\nu$ is a factorization of $\Theta$ into a Blaschke product and a
singular inner function, then $\zeta$ is a Carath\'eodory point if
and only if
$$
|\Theta'(\zeta)|=\sum\limits_n \frac{1-|z_n|^2}{|\zeta-z_n |^2}+
\int\limits_\mathbb{T}\frac{d\nu(\tau)}{|\zeta - \tau|^2} <+\infty.
$$
Here $z_n$ are zeros of $B$ and $\nu$ is a singular measure on the
circle $\mathbb{T}$.

\begin{theorem}
\label{th-model1}
Let $\Theta$ be an inner function in $\mathbb{D}$
such that there exists $\zeta\in \sigma(\Theta)$
which is a Carath\'eodory point.
Then there exists an exact system of reproducing
kernels such that its biorthogonal system is not complete.
\end{theorem}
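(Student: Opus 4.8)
The plan is to reduce the disc statement to its half-plane counterpart, Theorem~\ref{th-model}, by conformal transference. After a rotation we may assume $\zeta=1$. I would pick a M\"obius map $\psi$ of $\mathbb{C}^+$ onto $\mathbb{D}$ sending some \emph{finite} real point $\xi_0$ to $1$ (so that the preimage of $\infty$ is a boundary point of $\mathbb{D}$ distinct from $1$), and set $\tilde\Theta=\Theta\circ\psi$, which is inner in $\mathbb{C}^+$. Let $U\colon K_\Theta\to K_{\tilde\Theta}$ be the standard unitary operator induced by $\psi$ (multiplication by a nonvanishing weight composed with the substitution $w\mapsto\psi(w)$). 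The structural fact I would lean on is that $U$ carries the reproducing kernel $k_\lambda$ of $K_\Theta$, $\lambda\in\mathbb{D}$, to a nonzero scalar multiple of the reproducing kernel $k_{\psi^{-1}(\lambda)}$ of $K_{\tilde\Theta}$.

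From this I would read off the three correspondences needed. First, $U$ takes exact systems of reproducing kernels in $K_\Theta$ to exact systems of reproducing kernels in $K_{\tilde\Theta}$, and since neither rescaling the vectors of a system by nonzero scalars nor applying a unitary operator affects minimality, completeness, or completeness of the biorthogonal system, the property ``the biorthogonal system is complete'' is preserved in both directions. Second, $\sigma(\tilde\Theta)=\psi^{-1}(\sigma(\Theta))$, so $\xi_0\in\sigma(\tilde\Theta)$. Third, $\xi_0$ is a Carath\'eodory point for $\tilde\Theta$ precisely because $\zeta=1$ is one for $\Theta$; the cleanest way to check this is through the Ahern--Clark theorem \cite{ac70}, which says that $\zeta$ is a Carath\'eodory point iff $k_\zeta\in K_\Theta$ and every element of $K_\Theta$ has a finite nontangential limit at $\zeta$ --- both conditions are transported by $U$, using that $\psi$ is conformal at $\xi_0$ (its only critical/pole point is the preimage of $\infty$, which we arranged to be different from $1$).

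Putting these together, $\tilde\Theta$ is an inner function in $\mathbb{C}^+$ with a Carath\'eodory point $\xi_0\in\sigma(\tilde\Theta)$, so Theorem~\ref{th-model} provides an exact system of reproducing kernels in $K_{\tilde\Theta}$ whose biorthogonal system is incomplete; pulling it back by $U^{-1}$ gives the desired system in $K_\Theta$, which completes the reduction.

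I do not expect a serious obstacle along this route: everything rests on routine conformal bookkeeping (kernels to multiples of kernels, invariance of $\sigma$, invariance of the Carath\'eodory condition). The genuine work is hidden in Theorem~\ref{th-model}, whose proof parallels that of Theorem~\ref{main1}: the hard part there is to construct a generating function $G$ of an exact system that vanishes at the distinguished Carath\'eodory point yet still defines a \emph{complete} system --- i.e. whose zero set is a uniqueness set for the model space --- which is the analogue of the ``no entire $T$ with $TG\in\Hg$'' step. Should a self-contained disc argument be preferred, I would instead normalize $\Theta(1)=1$ (harmless, since it changes neither $K_\Theta$ nor $\sigma(\Theta)$), so that $k_1(z)=\frac{1-\Theta(z)}{1-z}$ lies in $K_\Theta$ with $\langle f,k_1\rangle=f(1)$ for all $f\in K_\Theta$, and then it suffices to exhibit an exact system $\{k_{\lambda_n}\}$ with $\lambda_n\in\mathbb{D}$ whose generating function $G$ has nontangential boundary value $G(1)=0$: the biorthogonal functions $g_n$, being scalar multiples of $G/(z-\lambda_n)$, then satisfy $\langle k_1,g_n\rangle=\overline{g_n(1)}=0$ for every $n$, so the nonzero vector $k_1$ witnesses the incompleteness. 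Constructing such a $G$ --- exploiting $1\in\sigma(\Theta)$ to have enough ``room'' near $1$ --- while keeping $\{\lambda_n\}$ a uniqueness set for $K_\Theta$ is, in that approach, the main obstacle.
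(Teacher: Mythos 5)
Your reduction is correct, but it runs in the opposite direction from the paper's, and it is worth seeing how the two relate. The paper does not prove Theorem~\ref{th-model} first and then transfer it to the disc; it proves Theorems~\ref{th-model} and~\ref{th-model1} \emph{simultaneously}, by reducing both (via a Frostman shift, to make $\Theta$ a Blaschke product, and a conformal change of variable) to the single normalized case of a Blaschke product $B$ on $\mathbb{C}^+$ whose Carath\'eodory point sits at $\infty$, with $1-B\in K_B$ playing the role of your $k_1$. So citing Theorem~\ref{th-model} is formally legitimate (it is a statement about the half-plane with no logical dependence on the disc version), but it shortcuts no work: the entire content of Section~\ref{model1} --- Lemmas~\ref{lem1}--\ref{lem3}, the dichotomy according to whether \eqref{easy} holds, and the four-step construction of $g=B_\Lambda O_m$ with $B\overline{g}$ outer, $\Lambda\cup\{\lambda_0\}$ a uniqueness set, and $\lim_{y\to\infty}y\,g(iy)=0$ --- is precisely the proof of Theorem~\ref{th-model}, and Theorem~\ref{th-model1} then follows by the same transference you describe. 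Your conformal bookkeeping is sound: kernels go to nonzero multiples of kernels, $\sigma$ is transported, and the Carath\'eodory condition transfers via the Ahern--Clark characterization once the pole of the M\"obius map is kept away from $\zeta$. What the paper's choice of sending $\zeta$ to $\infty$ (rather than to a finite real point) buys is a tractable normal form for the hard construction: the witness of incompleteness becomes the concrete function $1-B$, orthogonality to it becomes $\lim_{y\to\infty}y\,g(iy)=0$, and the angular derivative condition becomes $\sum_n y_n<\infty$ as in \eqref{q-form}. Your closing ``self-contained disc'' sketch correctly identifies this skeleton (a generating function with nontangential boundary value $0$ at the Carath\'eodory point whose zero set remains a uniqueness set) and correctly locates the difficulty there; that construction is exactly what the paper's joint proof supplies.
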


Without loss of generality we may assume that $\Theta$ is a Blaschke product.
Indeed, we can always pass to a Frostman shift
$B = \frac{\Theta-\gamma}{1-\overline \gamma \Theta }$,
$|\gamma|<1$, which is
a Blaschke product, and the map $f\mapsto (1-|\gamma|^2)^{1/2}
\frac{f}{1-\overline\gamma \Theta}$
is a unitary operator from $K_\Theta$ onto $K_B$, which
maps the kernels to the kernels (up to constant bounded factors).

We also will move to the upper half-plane so that $\zeta$ goes to
$\infty$. Recall that for a Blaschke product $B$, the
property to have an
"angular derivative at $\infty$" is equivalent to any of the
following:

a) there exists $\alpha\in \mathbb{C}$ with $|\alpha|=1$ such that
$$
\rea \frac{\alpha+\Theta(z)}{\alpha-\Theta(z)}=p\, \ima z
+\frac{\ima z}{\pi}
\int\limits_\mathbb{R}\frac{d \mu(t)}{|t-z|^2},
\qquad z\in\mathbb{C^+},
$$
for a singular measure $\mu$ and $p>0$;

b) there exists a unimodular constant $\alpha$ such that
$\alpha - B \in K_B$;

c) there exist a unimodular $\alpha$ and $q>0$ such that
$$
1-\overline \alpha B(iy) =
\frac{q}{y} +o\Big(\frac{1}{y}\Big), \qquad y\to +\infty.
$$
In this case
\begin{equation}
\label{q-form}
q=2p^{-1} = 2\sum_n y_n,
\end{equation}
where $z_n=x_n+iy_n$ are zeros of $B$ (and, in particular, the
series $\sum_n y_n$ converges). Of course we will assume $\alpha=1$,
so $1-B\in K_B$. Note also that for any $g\in K_B$ there exists
a finite limit $\lim_{y\to\infty} yg(iy)$, and
$$
(g,1-B) = 2\pi \lim_{y\to\infty} yg(iy).
$$

In what follows we again use essentially the inner-outer factorization
of $H^2$ functions. If $m\ge 0$
and $\log m\in L^2\Big(\frac{dt}{t^2+1}\Big)$ we denote by
$O_m$ the outer function with the modulus $m$ on $\mathbb{R}$.

If we identify the functions in $K_\Theta$ and their boundary values
on $\mathbb{R}$, then an equivalent definition of $K_\Theta$ is
$K_\Theta = H^2 \cap \Theta \overline{H^2}$. Thus, we have a criterion
for the inclusion $f\in K_\Theta$ which we will repeatedly use:
\begin{equation}
\label{crit}
f\in K_\Theta \ \Longleftrightarrow \ f\in H^2 \ \ \text{and} \ \
\Theta \overline f\in H^2.
\end{equation}

In the following lemmas we always assume that $\Theta$
is an inner function such that $\infty$ is a Carath\'eodory point and
$1-\Theta \in K_\Theta$.

Our first lemma shows that the zeros of $K_\Theta$ function may be
concentrated in the upper half-plane.

\begin{lemma}
\label{lem1}
Let $f=O_m B I \in K_\Theta$, where
$B$ is a Blaschke product and $I$ is some inner function.
Then there exists a function $g= O_{\tilde m} \tilde B \in K_\Theta$
such that $\Theta \overline g = O_{\tilde m}$ is outer,
$B$ divides the Blaschke product $\tilde B$,
and $|O_{\tilde m}| \asymp |O_m|$.

Moreover, if $\lim_{y\to\infty} y f(iy)=0$, we can choose $g$
so that $\lim_{y\to\infty} y g(iy)=0$.
\end{lemma}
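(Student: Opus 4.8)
The plan is to reduce to the case $I=1$, recast the statement through the natural conjugation on $K_\Theta$, and then construct the weight $\tilde m$ by replacing the singular part of a suitable inner function by a Blaschke product. For the reduction I would first record that $K_\Theta$ is invariant under division by inner functions: if $u\in K_\Theta$ and $u=I_1 v$ with $I_1$ inner and $v\in H^2$, then $\langle v,\Theta h\rangle=\langle u,\Theta I_1 h\rangle=0$ for every $h\in H^2$, so $v\perp\Theta H^2$, i.e. $v\in K_\Theta$. Applying this to $f=O_m BI$ gives $O_m B=f/I\in K_\Theta$ with the same outer factor, and also $O_m=f/(BI)\in K_\Theta$; since it clearly suffices to prove the lemma for $f=O_m B$, from now on assume $I=1$.

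Next, let $C\colon u\mapsto\Theta\overline u$ be the antilinear isometry of $K_\Theta$ onto itself ($C^2=\mathrm{id}$, $|Cu|=|u|$ on $\mathbb R$), and note $C(1-\Theta)=-(1-\Theta)$ on $\mathbb R$. Then ``$\Theta\overline g=O_{\tilde m}$ is outer'' holds exactly when $g=C(O_{\tilde m})$ for an outer $O_{\tilde m}\in K_\Theta$, and in that case the inner--outer factorization of $g=C(O_{\tilde m})$ reads $g=O_{\tilde m}\cdot\tilde B$ with $\tilde B=C(O_{\tilde m})/O_{\tilde m}$ inner (a quotient of an $H^2$ function by its outer part). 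So the task becomes: produce an outer $O_{\tilde m}\in K_\Theta$ with $|O_{\tilde m}|\asymp|O_m|$ on $\mathbb R$ such that the inner part of $\Theta\overline{O_{\tilde m}}$ is a Blaschke product divisible by $B$. Starting from the trial weight $O_{\tilde m}=O_m$ and writing $\Theta\overline{O_m}=O_m W$ with $W$ inner, the inclusion $O_m B\in K_\Theta$, i.e. $O_m W\overline B\in H^2$, forces $B\mid W$, so $W=B\Psi$; thus the factor $B$ is automatically present, and the only obstruction is the singular part of $W$. Note that merely dividing that singular factor out of $g$ would return it to $\Theta\overline g$, so the weight itself must be changed.

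This last point is the main step, and the main obstacle. Here I would approximate the singular part of $W$ by a Blaschke product $\beta$ closely enough that, setting $W_{\mathrm{Bl}}:=(W/\text{(singular part of }W\text{)})\cdot\beta$ — still divisible by $B$ — and defining $O_{\tilde m}$ to be the outer function with $\log\tilde m=\tfrac12\,\widetilde{(\arg\Theta-\arg W_{\mathrm{Bl}})}$, one gets $\tilde m\asymp m$. Since $\log m=\tfrac12\,\widetilde{(\arg\Theta-\arg W)}$, this amounts to keeping the conjugate function of $\arg W-\arg W_{\mathrm{Bl}}$ bounded; and $\log\tilde m\in L^1(\tfrac{dt}{1+t^2})$ because $\log m=\log|f|$ is. With this choice the identity $\widetilde{\widetilde\varphi}=-\varphi$ gives $\Theta=W_{\mathrm{Bl}}\,O_{\tilde m}/\overline{O_{\tilde m}}$ on $\mathbb R$, i.e. $\Theta\overline{O_{\tilde m}}=W_{\mathrm{Bl}}O_{\tilde m}\in H^2$, so $O_{\tilde m}\in K_\Theta$, and $g:=C(O_{\tilde m})=O_{\tilde m}W_{\mathrm{Bl}}$ has all the required properties. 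The hard part is precisely this approximation of an inner function by a Blaschke product with enough control on the phase to guarantee $\tilde m\asymp m$ while preserving the factor $B$; this is where the Carath\'eodory hypothesis at $\infty$ and $1-\Theta\in K_\Theta$, which pin down the behaviour at $\infty$, should be used. (A concrete alternative for obtaining a singular--factor--free $C$-image is a Frostman shift: for $\lambda\in\mathbb C^+$ with $\Theta(\lambda)$ a Frostman--regular value, $Ck_\lambda\propto(\Theta-\Theta(\lambda))/(z-\lambda)$ is a Blaschke product times $1-\overline{\Theta(\lambda)}\Theta$; one would then correct its modulus and enlarge its zero set to meet $|O_{\tilde m}|\asymp|O_m|$ and $B\mid\tilde B$.)

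Finally, for the ``moreover'': for $g\in K_\Theta$ one has $2\pi\lim_{y\to\infty}yg(iy)=\langle g,1-\Theta\rangle$, so with $g=C(O_{\tilde m})$ and $C(1-\Theta)=-(1-\Theta)$ we get $\lim_{y\to\infty}yg(iy)=-\overline{\lim_{y\to\infty}yO_{\tilde m}(iy)}$. Since $\infty$ is a Carath\'eodory point of $\Theta$, the inner part of $f$ is unimodular along $i\mathbb R_+$ at $\infty$ in a way that, together with the existence of $\lim_{y\to\infty}yf(iy)$, forces $\lim_{y\to\infty}yO_m(iy)=0$ whenever $\lim_{y\to\infty}yf(iy)=0$. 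Then one extra degree of freedom in the choice of $W_{\mathrm{Bl}}$ (relocating or adjoining finitely many zeros far out in $\mathbb C^+$, which changes $\tilde m$ by at most a small bounded factor) suffices to impose the single linear condition $\langle O_{\tilde m},1-\Theta\rangle=0$, whence $\lim_{y\to\infty}yg(iy)=0$.
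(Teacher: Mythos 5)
Your overall skeleton matches the paper's: you correctly reduce to studying the inner factor $W$ of $\Theta\overline{O_m}$ (the paper's $J$ times $BI$), you correctly observe that $B$ must divide it, and your treatment of the ``moreover'' via the pairing $2\pi\lim_{y\to\infty}yg(iy)=\langle g,1-\Theta\rangle$ and the conjugation $C$ is the same mechanism the paper uses. But the decisive step --- getting rid of the singular part of $W$ while keeping the outer factor comparable to $O_m$ --- is exactly the step you leave open (``the hard part is precisely this approximation\dots''), and the route you propose for it, approximating a singular inner function by a Blaschke product with \emph{bounded conjugate function of the phase error}, is not carried out and is not obviously possible in general; nothing in the hypotheses supplies such an approximation. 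The paper avoids this entirely with a one-line trick: writing $f_1=O_mB_1K$ with $K$ the singular factor, it takes a Frostman shift $K_1=\frac{K-\gamma}{1-\overline\gamma K}$ (a Blaschke product for a.e.\ small $\gamma$) and sets
$$
g=O_m(1-\overline\gamma K)K_1B_1=O_m(K-\gamma)B_1,\qquad O_{\tilde m}=O_m(1-\overline\gamma K),
$$
so that $1-|\gamma|\le|1-\overline\gamma K|\le1+|\gamma|$ gives $|O_{\tilde m}|\asymp|O_m|$ for free, and $\Theta\overline g=O_m(1-\overline\gamma K)=O_{\tilde m}$ is verified by a direct computation using $\Theta\overline{O_m}\,\overline{B_1}=KO_m$. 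You do mention Frostman shifts, but only parenthetically and applied to $\Theta$ through a reproducing kernel, which is a different object and again left incomplete. So the key idea is absent from your argument as written.

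A secondary, fixable weak point: your justification that $\lim_{y\to\infty}yf(iy)=0$ forces $\lim_{y\to\infty}yO_m(iy)=0$ rests on the claim that the inner part of $f$ is unimodular along $i\mathbb{R}_+$ at infinity, which would require ruling out singular mass at $\infty$ in that inner factor and controlling its Blaschke part --- neither is immediate. The clean route (the paper's) is to pass to $f_1=fJ$, note $y|f_1(iy)|\le y|f(iy)|\to0$ since $J$ is inner, and then use $\langle f_1,1-\Theta\rangle=\langle\Theta-1,\Theta\overline{f_1}\rangle=\langle\Theta-1,O_m\rangle$ to conclude $\lim_{y\to\infty}yO_m(iy)=0$; your conjugation identity already contains everything needed for this, so I recommend replacing the hand-wave by that computation.
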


\begin{proof}
By the criterion \eqref{crit}, we have $\Theta\overline f \in H^2$,
and hence, $\Theta \overline f = O_m J$ for some inner function $J$.
Then, again by \eqref{crit}, the function $f_1 = O_m B IJ $ is in
$K_\Theta$ and $\Theta \overline f_1 = O_m$. If $IJ$ is a Blaschke
product we are done. Assume now that $f_1 = O_m B_1 K$, where $K$ is
a singular inner function. Replace $K$ by its Frostman shift $K_1 =
\frac{K-\gamma}{1-\overline \gamma K}$, $|\gamma|<1$, which is a
Blaschke product. Put
$$
g = O_m (1-\overline \gamma K) K_1 B_1  = O_m (K-\gamma) B_1,
\qquad \ O_{\tilde m} = O_m (1-\overline \gamma K).
$$
Then $\Theta \overline g = \Theta \overline O_m (\overline K-\overline \gamma)
\overline B_1 =
O_m (1-\overline \gamma K)$ since $\Theta \overline O_m
\overline B_1 = K O_m$. Thus, $g \in K_\Theta$ and $|O_{\tilde m}|
\asymp |O_m|$.

Finally, note that if $\lim_{y\to\infty} y f(iy)=0$, that is,
$f$ is orthogonal to $1-\Theta$, then the same is true for $f_1$.
Hence,
$$
0=(f_1, 1-\Theta)= (\overline \Theta f_1, \overline \Theta -1)
= (\Theta-1, \Theta \overline f_1) = (\Theta-1, O_m).
$$
Thus, $\lim_{y\to\infty} y O_m(iy)=0$, and the same is true for $g$,
since $|O_{\tilde m}| \asymp |O_m|$.
\end{proof}

The next lemma shows that one can get rid of real zeros
of a function in $K_\Theta$. Here we will use a lemma
due to N. Makarov and A. Poltoratski \cite{mp}.

\begin{lemma}
\label{lem2}
Let $f$ be a function in $K_\Theta$, let $a_n \in \mathbb{R}$, $a_n<a_{n+1}$,
$|a_n|\to\infty$, $n\to\infty$, and assume that there exist
nonnegative integers $m_n$ such that
$$
(t-a_n)^{-m_n} f \in L^2(a_n-\delta_n, a_n+\delta_n)
$$
for a small $\delta_n>0$, but $(t-a_n)^{-m_n-1} f \notin
L^2(a_n-\delta_n, a_n+\delta_n).$
Let $\{w_j\}$ be a sequence of points in
$\mathbb{C}^+$ with $|w_j|\to\infty$.
Then there exists a function $h \in K_\Theta$
such that $f/h$ is locally bounded on each open interval
$(a_n, a_{n+1})$, for any $n$ we have $\frac{h}{z-a_n} \notin
L^2(a_n-\delta_n, a_n+\delta_n)$, and
$$
|h(w_j)|\gtrsim  |f(w_j)|.
$$

Moreover, if $\lim_{y\to\infty} yf(iy)=0$, we can take $h$
so that $\lim_{y\to\infty} y h(iy)=0$.
\end{lemma}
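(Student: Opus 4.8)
The plan is to build $h$ as a product $h=f\psi$, where $\psi$ is an outer function with $\psi\ge 1$ on $\mathbb R$; such a $\psi$ automatically has $|\psi|\ge 1$ everywhere in $\mathbb C^+$, so that $|h(w_j)|\ge|f(w_j)|$ comes for free, while on $\mathbb R$ the multiplier $\psi$ is arranged to push $f$ away from $0$ on small neighbourhoods of the points $a_n$. The substantial content is that $h$ must stay in $K_\Theta$, and that is exactly what the lemma of Makarov and Poltoratski from \cite{mp} delivers. I would begin with a normalisation via Lemma~\ref{lem1}: we may assume that $f=O_mB$, where $m=|f|$ on $\mathbb R$ and $B$ is a Blaschke product, and that $\Theta\overline f=O_m$ is outer. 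Since passing to a function comparable to $f$ on $\mathbb R$ does not change the $L^2$-orders $m_n$ at the $a_n$, and the ``moreover'' clause of Lemma~\ref{lem1} preserves $\lim_{y\to\infty}yf(iy)=0$, this reduction is harmless for all parts of the statement except possibly the pointwise lower bound at the $w_j$ — a point I return to below.

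Next I would choose the target weight $\tilde m$. After shrinking the $\delta_n$ so that the intervals $I_n=(a_n-\delta_n,a_n+\delta_n)$ become pairwise disjoint, pick constants $\rho_n>0$ with $\rho_n\to 0$ and set $\tilde m=\max(m,\rho_n)$ on $I_n$ and $\tilde m=m$ off $\bigcup_n I_n$; then $\tilde m\ge m$ everywhere and $\tilde m\ge\rho_n$ on $I_n$. The constants $\rho_n$ (and, if needed, the $\delta_n$) are taken so small that all of the following hold simultaneously: $\tilde m\in L^2(\mathbb R)$; $\log(\tilde m/m)\in L^1\!\bigl(\tfrac{dt}{1+t^2}\bigr)$, so that $\psi:=O_{\tilde m/m}$ is a well-defined outer function with $\psi\ge 1$ on $\mathbb R$ (because $\tilde m\ge m$); and the value at $iy$ of the Poisson extension of $\log(\tilde m/m)$ is $o(\log y)$ as $y\to+\infty$, so that $\psi(iy)=O(y^{\varepsilon})$ for every $\varepsilon>0$, and hence $y\,f(iy)\,\psi(iy)\to 0$ whenever $y\,f(iy)\to 0$.

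The key step is the application of the Makarov--Poltoratski lemma from \cite{mp}. Because $\Theta\overline f=O_m$ is already outer, the only thing that has to be absorbed is the outer multiplier $\psi$ — whose logarithm is Poisson-summable and supported on the thin set $\bigcup_n I_n$ — and the lemma produces $h:=O_{\tilde m}B=f\psi$ in $K_\Theta$; equivalently, it yields $O_m\,\overline{O_{\tilde m/m}}\in H^2$, which by \eqref{crit} is precisely the condition $\Theta\overline h\in H^2$ (indeed $\Theta\overline h=O_m\,\overline{O_{\tilde m/m}}$). Granting $h$, the three assertions follow at once: on each compact $K\subset(a_n,a_{n+1})$ one has $|f/h|=|1/\psi|=m/\tilde m\le 1$, so $f/h$ is locally bounded on $(a_n,a_{n+1})$; on $I_n$ one has $|h|\ge\rho_n>0$, whence $\int_{I_n}|h(t)|^2\,|t-a_n|^{-2}\,dt=+\infty$, i.e.\ $h/(z-a_n)\notin L^2(a_n-\delta_n,a_n+\delta_n)$; and since $\psi$ is outer with $|\psi|\ge 1$ on $\mathbb R$, also $|\psi(w_j)|\ge 1$, so $|h(w_j)|\ge|f(w_j)|$. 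If moreover $\lim_{y\to\infty}yf(iy)=0$, then by the normalisation and the growth bound on $\psi$ we get $\lim_{y\to\infty}yh(iy)=\lim_{y\to\infty}yf(iy)\,\psi(iy)=0$.

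I expect the crux to lie in two interlocking places. First, making the Makarov--Poltoratski lemma genuinely applicable: one must verify that the outer weight $\psi$, which blows up near each $a_n$, is admissible in the precise sense required by \cite{mp}, so that $f\psi$ lands in $K_\Theta$ and not merely in $H^2$; it is here that the standing hypothesis that $\infty$ is a Carath\'eodory point of $\Theta$ (equivalently $1-\Theta\in K_\Theta$) supplies the room, together with the extremely small choices of $\rho_n,\delta_n$ from the second step, which must also respect the $L^2$- and Poisson-integrability constraints while keeping $\tilde m$ bounded below on each $I_n$. Second, the normalisation in the first step has to be made compatible with $|h(w_j)|\gtrsim|f(w_j)|$ for the original $f$: since Lemma~\ref{lem1} replaces a singular inner factor of $f$ by a Blaschke product via a Frostman shift, one should select the Frostman parameter so as to avoid the at-most-countable set of values it could assume on $\{w_j\}$ — which is possible precisely because $\{w_j\}$ is a prescribed sequence with $|w_j|\to\infty$, leaving the construction enough freedom.
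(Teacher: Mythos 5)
There is a genuine gap at the central step of your construction: the claim that $h=f\psi\in K_\Theta$ for the outer multiplier $\psi=O_{\tilde m/m}$. By the criterion \eqref{crit} you need $\Theta\overline h=O_m\overline{\psi}\in H^2$, and you assert this is ``precisely'' what the Makarov--Poltoratski lemma yields. It is not, and the assertion is generically false. The boundary function $\overline{\psi(t)}$ is the trace of $\psi^*(z)=\overline{\psi(\overline z)}$, which is analytic in $\mathbb{C}^-$, not $\mathbb{C}^+$; writing $O_m\overline\psi=O_{\tilde m}\cdot\overline\psi/\psi$, the unimodular factor $\overline\psi/\psi=e^{-2i\arg\psi}$ has no reason to be inner, so $O_m\overline\psi$ need not even lie in the Smirnov class of $\mathbb{C}^+$. (You cannot fix this by making $\psi$ real and $\ge 1$ on $\mathbb{R}$ either: an outer function that is positive a.e.\ on $\mathbb{R}$ has vanishing Hilbert transform of $\log|\psi|$ and is therefore constant.) The lemma of Makarov and Poltoratski \cite{mp} invoked in the paper is of a different nature: it constructs a \emph{meromorphic inner} function $J$ with $\{J=1\}=\{a_n\}$ and prescribed small point masses $\nu_n$ so that $f/(1-J)\in H^2$. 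The paper's multiplier $\frac{2J}{J-1}=1-\frac1i\sum_n\frac{\nu_n}{a_n-z}$ is a Cauchy transform of a positive discrete measure on $\{a_n\}$; crucially, its complex conjugate on $\mathbb{R}$ equals $\frac{2}{1-J}=1+\frac1i\sum_n\frac{\nu_n}{a_n-z}$, which is \emph{again analytic in} $\mathbb{C}^+$. That symmetry is exactly what makes both $h\in H^2$ and $\Theta\overline h\in H^2$ reduce to $L^2$-estimates near the $a_n$, controlled by choosing $\nu_n$ small relative to $\|(t-a_n)^{-m_n}f\|_{L^2(a_n-\delta_n,a_n+\delta_n)}$ (with the procedure iterated $m_n$ times when $m_n>1$). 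An outer multiplier supported near $\{a_n\}$ has no such symmetry, so your $h$ leaves $K_\Theta$.

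A secondary, unresolved point is your preliminary normalization via Lemma \ref{lem1}: that lemma replaces $f=O_mBI$ by $g=O_{\tilde m}\tilde B$ where $B$ \emph{divides} $\tilde B$, i.e.\ it may introduce new zeros, and $|O_{\tilde m}|\asymp|O_m|$ only controls moduli on $\mathbb{R}$, not at the interior points $w_j$. So after this step the comparison $|h(w_j)|\gtrsim|f(w_j)|$ with the \emph{original} $f$ is not secured; choosing the Frostman parameter to ``avoid'' $\{w_j\}$ does not by itself prevent the new zero set from accumulating near some $w_j$. The paper sidesteps this by applying Lemma \ref{lem2} \emph{before} Lemma \ref{lem1} and then using that the outer part dominates the function at interior points, $|O_{|h|}(z_n)|\ge|h(z_n)|$. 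The parts of your argument that do work (the integrability of $\log(\tilde m/m)$, the local bounds $|f/h|\le 1$ and $|h|\ge\rho_n$ on $I_n$, and the growth control of $\psi(iy)$) are all downstream of the membership $h\in K_\Theta$, which is the step that fails.
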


\begin{proof}
Assume for a moment that all $m_n=1$.
Then we divide $f$ by a function of the form $1-J$
where $J$ is a meromorphic Blaschke product and $J=1$ exactly at $\{a_n\}$.
We also want to do this so that $f/(1-J)$ is still in $H^2$
(and hence in $K_B$). Such a choice of $J$ is possible by
\cite[Lemma 3.15]{mp}. The function $J$ should be constructed as
$$
\frac{1+J(z)}{1-J(z)}  =  \frac{1}{i} \sum_n \frac{\nu_n}{a_n-z},
$$
where $\nu_n>0$ are very small (the decay depends on the norms of
$(t-a_n)^{-m_n} f$ in $L^2(a_n-\delta_n,a_n+\delta_n)$).
Since the right-hand side has a positive real part, $J$
is a meromorphic inner function, and also,
$$
\frac{2J(z)}{J(z)-1}  = 1+ \frac{1}{2i} \sum_n \frac{\nu_n}{a_n-z}.
$$
Then we have
$$
h= \frac{2J}{J-1}f \in H^2.
$$
Also, $\Theta\overline h = \Theta \overline f \cdot \frac{1}{1-J}
\in H^2$, and so $h\in K_\Theta$ by \eqref{crit}. Obviously, $f/h$
is locally bounded on each interval $(a_n, a_{n+1})$, Since
$\lim_{y\to\infty}\frac{2J(iy)}{1-J(iy)} =-1$, we have $\lim yh(iy)
=0$. Finally, note that by choosing $\nu_n$ sufficiently small we
can make $\frac{|2J(w_j)|}{|1-J(w_j)|}$ as close to 1 as we want.

In general case when $m_n\ne 1$ we repeat the procedure and
construct a sequence of meromorphic inner functions $J_k$ so that
$a_n$ is in the set $\{t:J_k(t)=1\}$ exactly for $m_n$ of the
functions $J_k$. Put
$$
h= g \prod_k \frac{2 J_k}{J_k-1}.
$$
Chosing the masses $\nu_n^k$ in the definition of $J_k$ sufficiently
small we may achieve that the product converges to $h\in K_\Theta$,
$\lim_{y\to\infty} yh(iy) =0$ and $|h (w_j) |\ge c |f (w_j) |$ for a
positive constant $c$. We omit the technicalities.
\end{proof}

\begin{lemma}
\label{lem3}
Let $f$ be a Smirnov class function in $\mathbb{C}^+$
such that $f$ restricted to $\mathbb{R}$ has
a Smirnov class extension
to $\mathbb{C}^-$. Assume also that $\Delta\subset \mathbb{R}$
is an open interval such that $f$ is in $L^2(\Delta)$.
Then $f$ has an analytic extension through $\Delta$.
\end{lemma}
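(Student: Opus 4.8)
The statement is a local regularity result: a function that is of Smirnov class both in $\mathbb{C}^+$ and in $\mathbb{C}^-$ (where the two are identified on $\mathbb{R}$) and that is $L^2$ on a real interval $\Delta$ must extend analytically across $\Delta$. The plan is to reduce to the classical fact that a function that is holomorphic in a neighborhood of $\Delta$ minus $\Delta$ itself, and that is locally $L^1$ (or $L^2$) there, has a removable singularity along $\Delta$ — i.e.\ Morera-type reflection / Painlev\'e's theorem on removability of a segment. Concretely, let $U$ be a small open rectangle straddling $\Delta$, with $U^\pm = U \cap \mathbb{C}^\pm$. By hypothesis $f$ is holomorphic on $U^+$ and on $U^-$, and $f \in L^2(\Delta)$; I want to show the two pieces glue to a single holomorphic function on $U$.

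First I would fix a slightly smaller interval $\Delta' \Subset \Delta$ and a rectangle $R \subset U$ with $R \cap \mathbb{R} = \Delta'$, and consider, for $z \in R \setminus \mathbb{R}$,
\[
g(z) = \frac{1}{2\pi i}\int_{\partial R} \frac{f(\zeta)}{\zeta - z}\, d\zeta,
\]
where the boundary integral is taken using the boundary values of $f$ from inside $R^+$ along the top and from inside $R^-$ along the bottom, and using $f|_\Delta$ along the two horizontal pieces on $\mathbb{R}$. The function $g$ is holomorphic on all of $R \setminus \mathbb{R}$ by construction, and in fact extends holomorphically across $\Delta'$ since the Cauchy kernel is smooth there and $f|_{\Delta} \in L^2(\Delta') \subset L^1(\Delta')$. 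The key identity to establish is $g = f$ on $R^+$ and on $R^-$; this is where the Smirnov-class hypothesis is used in an essential way. Indeed, for $z \in R^+$, Cauchy's theorem on $R^+$ gives $f(z) = \frac{1}{2\pi i}\int_{\partial R^+} \frac{f(\zeta)}{\zeta - z} d\zeta$, provided $f$ has nontangential boundary values in $L^1$ on $\partial R^+$ and one can justify the Cauchy integral formula up to the boundary. This last point is exactly the content of the Smirnov class: a Smirnov function in $\mathbb{C}^+$ that is additionally $L^2$ on a boundary interval has a.e.\ nontangential boundary values there, the boundary function controls it (no singular inner factor can be "hidden" over $\Delta'$ because the function is genuinely $L^2$ up to that piece), and the Cauchy formula is valid. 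Subtracting the analogous formula for $R^-$, the contributions along $\Delta'$ cancel in $g$ and we read off $g = f$ on each half; hence $g$ is the desired analytic extension of $f$ across $\Delta'$. Since $\Delta' \Subset \Delta$ was arbitrary, $f$ extends analytically through all of $\Delta$.

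The main obstacle is the rigorous justification that the Smirnov class hypothesis, together with $f \in L^2(\Delta)$, is enough to make the Cauchy integral representation valid up to the boundary segment $\Delta'$ — in other words, that $f$ restricted to a half-disc abutting $\Delta'$ behaves like an $H^2$ (or $E^2$) function of that half-disc, with the correct Cauchy reproducing property and no pathology concentrated on $\Delta'$. The clean way to handle this is to first use a conformal map of a half-disc onto $\mathbb{C}^+$ and recall that Smirnov class is preserved, so locally $f$ is the ratio of a bounded analytic function and an outer function; the $L^2$ control on $\Delta'$ then forces the denominator's boundary modulus to be bounded below there, and standard $H^p$ theory (Smirnov's theorem, $f \in N^+ \cap L^2(\Delta') \Rightarrow$ local $H^2$ behaviour) yields both the boundary values and the Cauchy formula. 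Once that local $H^2$-type statement is in hand, the gluing argument above is routine. I would present the proof in roughly this order: (i) localize to a rectangle $R$ with $R \cap \mathbb{R} = \Delta'$; (ii) invoke the local $H^2$ property on $R^\pm$ coming from the Smirnov hypothesis and $L^2(\Delta)$; (iii) write $f$ on $R^\pm$ via the Cauchy integral over $\partial R^\pm$; (iv) add the two, observe cancellation along $\Delta'$, and conclude that the resulting Cauchy integral over $\partial R$ is holomorphic across $\Delta'$ and agrees with $f$; (v) let $\Delta' \Subset \Delta$ exhaust $\Delta$.
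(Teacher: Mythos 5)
Your proposal is correct and follows essentially the same route as the paper: the paper likewise writes $f$ as a Cauchy integral over the boundary of a half-region abutting a compactly contained subinterval (a half-disc rather than your rectangle), adds the vanishing Cauchy integral over the reflected region so that the contributions along the real segment cancel, and identifies $f$ with the resulting Cauchy integral over the full outer boundary, which is analytic across the segment; the passage to the boundary is justified exactly as you indicate, via the $L^2$ convergence of $f(\cdot+i\varepsilon)$ supplied by the Smirnov hypothesis together with $f\in L^2(\Delta)$. The only point to polish is your parenthetical claim that the outer denominator's boundary modulus is forced to be bounded below on $\Delta'$ --- that is neither true in general nor needed; the correct (and standard) fact is the local Smirnov theorem, $f\in N^+$ near $\Delta'$ and $f\in L^2(\Delta')$ imply local $E^2$ behaviour, which you also state and which is all the gluing argument requires.
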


\begin{proof}
Let $[x-r,x+r]$ be a subinterval of $\Delta$.
Since $f$ is in the Smirnov class and in $L^2(\Delta)$
we have $f(\cdot+i\varepsilon) \to f$ in $L^2([x-r,x+r])$
when $\varepsilon\to +0$, and also when $\varepsilon\to -0$.
Consider the contours $\gamma_+ = [x-r,x+r] \cup
\{z\in \mathbb{C}^+: |z-x|=r\}$
and $\gamma_- = [x-r,x+r] \cup
\{z\in \mathbb{C}^-: |z-x|=r\}$ with the standard orientations.
If $z_0\in \mathbb{C^+}$, $|z_0-x|<r$, we have
$$
f(z) = \frac{1}{2\pi i} \int_{\gamma_+}\frac{S(z)}{z-z_0}dz,
\qquad
0 = \frac{1}{2\pi i} \int_{\gamma_-}\frac{S(z)}{z-z_0}dz
$$
(step a bit from $\mathbb{R}$, apply the Cauchy formula
and then pass to the limit). Taking the sum we conclude that
$$
f(z) = \frac{1}{2\pi i} \int_{|z-x|=r}
\frac{S(z)}{z-z_0}dz, \qquad |z-x|<r, \ z\notin \mathbb{R},
$$
and, hence, $f$ has an analytic extension in the whole disc $|z-x|<r$.
\end{proof}

\begin{proof} (of Theorems \ref{th-model} and \ref{th-model1}).
As we have seen, it suffices to prove the theorem for the case
where $\Theta=B$ is a Blaschke product such that
$\infty\in  \sigma(B)$, $\infty$ is a Carath\'eodory point
and $1-B\in K_B$. The symbol $q$ has the same meaning as
in \eqref{q-form}.

The idea of the proof is to construct a function $g$ in $K_B$ such that
\smallskip

(i) $g=B_\Lambda O_m$ and $B \overline g= O_m$, $O_m$ is an outer function
in $K_B$;

(ii) $\Lambda'=\Lambda \cup \{\lambda_0\}$ is a uniqueness set for
$K_B$ for any $\lambda_0 \in \mathbb{C}^+ \setminus \Lambda$;

(iii) $g$ is orthogonal to $1-B$ which means that $\lim_{y\to\infty} y|g(iy)| =0$.
\smallskip

If such $g$ is constructed, then the system
$(z-\lambda_0) \frac{g(z)}{z-\lambda}$,
$\lambda\in \Lambda'$, is biorthogonal to a complete system
$\{k_\lambda\}_{\lambda\in \Lambda'}$, but it is not itself
complete, since it is orthogonal to $1-B$.

We will consider separately two cases with slightly different proofs.
Let us start with an easier case.
\smallskip
\\
{\bf Case 1.} Assume that
\begin{equation}
\label{easy}
\limsup_{y\to +\infty} y^2\bigg|1- B(iy) - \frac{q}{y}\bigg|=+\infty.
\end{equation}
Consider the function
$$
f(z) = 1-B(z) -\frac{iq}{z-\overline z_0},
$$
where $z_0=x_0+iy_0$ is some zero of $B$. Obviously,
$(f, 1-B) = \lim_{y\to\infty} y|f(iy)| = 0$. Also note that for
$t\in \mathbb{R}$,
$$
f(t) =  \frac{t-x_0+iy_0-iq}{t-x_0+iy_0} -B(t).
$$
Since $q>2 y_0$ we have for almost all $t$
\begin{equation}
\label{1}
|f(t)| \ge  \bigg| \frac{t-x_0+iy_0-iq}{t-x_0+iy_0} \bigg| -1 \ge \frac{C}{t^2}.
\end{equation}
Now applying Lemma \ref{lem1} to $f$ we obtain a function $g=O_m B_\Lambda
\in K_B$
such that $|g| \asymp |f|$ on $\mathbb{R}$
and $B\overline{g} =O_m$.

Let us show that
$\Lambda' = \Lambda \cup \{\lambda_0\}$ is a uniqueness set for
$K_B$ for any $\lambda_0 \in \mathbb{C}^+ \setminus \Lambda$.
Assume the converse. Then there exists a function $F\in K_B$
which vanishes on $\Lambda'$. We can write $F=Sg$
for a function $S$ which is analytic in $\mathbb{C^+}$. Then $S= (F/B_\Lambda)/O_m$
is in the Smirnov class in $\mathbb{C^+}$. Also, since $F\in K_B$
we have
$$
B \overline F = B \overline g \overline S = O_m\overline S \in K_B.
$$
So $S^*(z) = \overline {S(\overline z)} $ has a Smirnov class extension  to
the upper half-plane, or $S$ itself is a Smirnov class function in
$\mathbb C^-$. Also, in view of (\ref{1}),
$S$ is locally in $L^2$ on $\mathbb R$ and hence, by Lemma \ref{lem3},
$S$ is entire.  Since $S$ is in the Smirnov class both in
$\mathbb C^+$ and in $\mathbb C^-$, it is of zero exponential type
by M.G. Krein's theorem
(see, e.g., \cite[Chapter I, Section 6]{hj}).

On the other hand, applying \eqref{1} once again we conclude that
$S \in (t+i)^2 L^2(\mathbb{R})$, and so $S $ is a polynomial
of degree at most 1. On the other hand, we have
$S^* O_m \in K_B$. Note that
$$
|O_m(iy)| \ge |f(iy)| = \bigg| 1-B(iy) -\frac{q}{y} +
O\Big(\frac{1}{y^2}\Big)\bigg|.
$$
If $S$ is not a constant, then, by (\ref{easy}),
$\limsup |yS^*(iy)O_m(iy)| =+\infty$,
but this is impossible, since this limit is finite for any function in $K_B$.
\medskip

{\bf Case 2.} Now assume that (\ref{easy}) is not satisfied, that is, there exists
$M>0$ such that
\begin{equation}
\label{diff}
y^2\bigg|1- B(iy) - \frac{q}{y}\bigg| \le M
\end{equation}
for sufficiently large $y$. The proof for Case 1 does not work, since
if (\ref{diff}) is satisfied we can not claim that
$S^* O_m$ is not in $K_B$ when $S$ is a polynomial of degree 1;
we need to use the fact that $\infty$ is a point of the spectrum
(i.e., a limit point for the zeros of $B$), since
for finite Blaschke products the argument should fail.
However, we will again construct $g$ satisfying (i)--(iii).

{\bf Step 1.} Take a very sparse subsequence $z_n=x_n+iy_n$
of zeros of $B$ so that $|z_n| \to \infty$ and
$\{z_n\}$ is a Carleson interpolating sequence.
Thus, the functions $\sqrt{y_n} (z-\overline z_n)^{-1}$
form a Riesz sequence in $K_B$ (see, e.g., \cite[Lecture VIII]{nk}).
Also we may assume that $y_n<1$, $x_n >2M/q$ for all $n$, and
$|x_n-x_k| >x_n/2$ for any $n, k$, $n\ne k$.

Now take a sequence $(c_n)\in \ell^2$, $c_n>0$, such that
\smallskip

a) $\sum_n c_n^2 x_n^2 =+\infty$;
\smallskip

b) $\sum_n \sqrt{y_n} c_n =1$ (recall that $\sum_n y_n < +\infty$);
\smallskip

c) $q c_n >4 \sqrt{y_n}$.
\smallskip

It is easy to see that these conditions may be achieved, if $y_n$
tends to zero sufficiently fast (e.g., if $\sum_n \sqrt{y_n} = m<1$
take $c_n=1/m$). It follows from b) and the fact that $x_n>2M/q$, that
\smallskip

d) $q\sum \sqrt{y_n} c_n x_n >M$ (may be $+\infty$).
\smallskip

Put
$$
f(z) = 1-B(z) -iq\sum \frac{ \sqrt{y_n} c_n }{z-\overline z_n}.
$$
Obviously, this is a function in $K_B$ which is orthogonal to $1-B$.
Also, for sufficiently large $y$,
\[
\begin{aligned}
y^2|\ima f(iy)|
&= y^2\bigg| \ima \big(1-B(iy)\big) - q\sum \frac{\sqrt{y_n} c_n x_n}
{x_n^2 +(y+y_n)^2}\bigg|  \\
& \ge y^2 q\sum \frac{\sqrt{y_n} c_n x_n} {x_n^2 +(y+y_n)^2} -M \ge C>0
\end{aligned}
\]
by condition d) and the fact that $y^2|\ima (1-B(iy))| \le M$.

We will see that $f(z_n) = -\frac{q c_n}{\sqrt{y_n}} +O(1)$
(since $x_n$ tends to infinity very rapidly),
so $f$ has a good growth along $z_n$
and we will see later that $zf$ can not be in $H^2$.
\smallskip

{\bf Step 2.} Applying Lemma \ref{lem1} we would obtain from $f$
a new function $g\in K_B$ such that $B \overline g$ is outer.
However, $f$ may have real zeros and we shall divide them out first.
Consider the function
$$
R(t) = \Big|1-iq\sum \frac{\sqrt{y_n} c_n}{t-\overline z_n}\Big|^2 -1 =
\Big(1-iq\sum \frac{\sqrt{y_n} c_n}{t-\overline z_n}\Big)
\Big(1+iq\sum \frac{\sqrt{y_n} c_n}{t-z_n}\Big) -1.
$$
It is analytic on $\mathbb R$ which implies that
zeros $a_n$ of $R$ are of finite multiplicities and $a_n\to\infty$.
Hence, we can represent $\mathbb R = \cup [a_n, a_{n+1}]$ where
$a_n<a_{n+1} \to \infty$ and $R$ is locally separated from zero
on the open intervals $(a_n, a_{n+1})$. Then
$$
|f(t)|\ge \bigg|\Big|1-iq\sum \frac{\sqrt{y_n} c_n}{t-\overline z_n}\Big| -1\bigg|
$$
is locally separated from 0 almost everywhere on each of the
intervals $(a_n, a_{n+1})$.

By Lemma \ref{lem2} there exists a function $h \in K_B$
such that $\frac{h}{z-a_n} \notin L^2(a_n-\delta, a_n+\delta)$
for any $\delta>0$, $|h/f|$ is locally separated from $0$
on any interval $(a_n, a_{n+1})$ and
$|h(z_n)| \gtrsim |f(z_n)|$. Now we apply Lemma \ref{lem3}
to the function $h$ and obtain a function $g = O_m B_\Lambda
\in K_B$ such that $B\overline g = O_m$ is outer,
$g$ is separated from zero locally on the intervals
$(a_n, a_{n+1})$,
and $\lim_{y\to\infty} y g(iy) = 0$. Also we have
$$
|O_m(z_n)| \asymp |O_{|h|}(z_n)| \ge |h(z_n)| \gtrsim |f(z_n)|.
$$

{\bf Step 3.} To complete the proof we need to show that
$\Lambda \cup \{\lambda_0\}$ is a uniqueness set for $K_B$.
Assume the converse. Then $F= Sg \in K_B$ for some function
$S$ analytic in $\mathbb{C}^+$ which vanishes at $\lambda_0$.
As in the proof of Case 1,
$S$ is in the Smirnov class both in $\mathbb{C}^+$ and in $\mathbb{C}^-$.
Also $S = F/g$ is locally in $L^2$ on $(a_n, a_{n+1})$.
By Lemma \ref{lem3} $S$ is meromorphic with possible poles in $a_n$.
Since $R$ has zeros of finite multiplicities,
there exist $m_n$ such that near $a_n$ we have
$$
|f(t)|\ge C|R(t)|\ge C |t-a_n|^{m_n}
$$
for some $C>0$, and an analogous estimate holds for $g$. Hence, $S$
may have only poles at the points $a_n$. However, $(t-a_n)^{-1}g
\notin L^2(a_n-\delta,a_n+\delta)$, and we conclude that $S$ is an
entire function which is of zero exponential type by Krein's
theorem.

We have $|O_m (iy)| \gtrsim |f(iy)| \ge Cy^{-2}$, $y\to+\infty$.
So,
$$
|S(iy)| \le \Big|\frac{F(iy)}{B_\Lambda(iy)}\Big|
|O_{m}(iy)|^{-1} \le Cy^{3/2}
$$
and
$$
|S^*(iy)| \le \frac{|(B\overline F)(iy)|}{|O_{m}(iy)|} \le Cy^{3/2},
\qquad y\to +\infty.
$$
We conclude that $S$ is a polynomial of degree at most 1.
\smallskip

{\bf Step 4.}
To finish the proof of completeness of
$\{k_\lambda\}_{\lambda\in \Lambda\cup \{\lambda_0\}}$,
we need to show that $S$ is a constant and, thus, zero.
Note that we have $S^* O_{m} \in K_B$.
Let $S$ be a polynomial of degree 1. Then we have $z O_{m} \in H^2$,
and so, since $z_n$ is a Carleson sequence, we have
\begin{equation}
\label{carl}
\sum_n y_n |z_n|^2 |f(z_n)|^2
\le C \sum_n y_n |z_n|^2 |O_m(z_n)|^2  <+\infty.
\end{equation}
On the other hand,
$$
\rea f(z_n) = 1-\frac{q c_n}{2\sqrt{y_n}} -q\sum_{k\ne n}
\frac{(y_n+y_k)\sqrt{y_k} c_k}
{|z_n-\overline z_k|^2} =1- \frac{q c_n}{2\sqrt{y_n}} -d_n.
$$
Using the properties of $z_n$ we get
$$
|d_n| = q\bigg|\sum_{k\ne n} \frac{\sqrt{y_k} c_k(y_n+y_k)}
{|z_n-\overline z_k|^2}\bigg| \le \frac{4q}{x_n} \sum_{k\ne n} \sqrt{y_k} c_k,
$$
and hence $\sum_n y_n |z_n|^2|d_n|^2 <+\infty$.
Recall also that by c)
we have $\frac{qc_n}{2\sqrt{y_n}} >2$. Thus
$$
|z_n|\cdot |\rea f(z_n)| +|z_n d_n| \ge \frac{qc_n |z_n|}{4\sqrt{y_n}},
$$
but, by the choice of $c_n$,
$$
\sum_n |z_n|^2 |c_n|^2 =+\infty,
$$
so $\sum_n y_n|z_n|^2 |\rea f(z_n)|^2=+\infty$, a contradiction
to \eqref{carl}. Thus $S \equiv const$ and,
since $S(\lambda_0)=0$, we finally conclude
that $S\equiv 0$.
\end{proof}


\section{Sufficient conditions for the completeness}
\label{model2}

In this section we discuss conditions sufficient for the completeness
of the system biorthogonal to an exact system of reproducing kernels.
Since the model spaces generated by meromorphic inner functions
essentially coincide with de Branges spaces we may obtain
completeness for a class of model subspaces
as a corollary of Theorem \ref{main2}. However, it seems
that this method does not work for the general model spaces.

On the other hand, even if we can not say that {\it any}
system biorthogonal to an exact system of reproducing kernels
is complete, one may look for criteria for completeness of
the biorthogonal system in terms of the generating function.
Results of this type may have applications in the spectral theory,
since systems biorthogonal to systems of reproducing kernels
appear, e.g., as eigenfunctions of certain rank one perturbations
of unbounded selfadjoint operators \cite{by}.

The crucial property of the model spaces which makes it possible
to apply the ideas from the proof of Theorem \ref{main2}
is the representation of the functions in $K_\Theta$
via the so-called Clark measures \cite{cl}, which is
similar to \eqref{param}.
For any $\alpha\in\mathbb{C}$, $|\alpha|=1$,
the function $(\alpha+\Theta)
/(\alpha-\Theta)$ has positive real part in the upper half-plane.
Hence, there exist $p_\alpha\ge 0$ and a measure
$\mu_\alpha$ with $\int (1+t^2)^{-1} d\mu_\alpha(t)<+\infty$
such that
$$
  \rea \frac{\alpha+\Theta(z)}{\alpha-\Theta(z)}=p_\alpha \ima z
  +\frac{\ima z}{\pi}
  \int\limits_\mathbb{R}\frac{d \sigma_\alpha(t)}{|t-z|^2},
  \qquad z\in\mathbb{C^+}.
$$
The Clark theorem states that if $\mu_\alpha$
is purely atomic (that is, $\mu_\alpha=\sum\limits_n c_n \delta_{t_n}$,
where $\delta_x$ denotes the Dirac measure at the point $x$)
and $p_\alpha=0$, then
the system
$\{k_{t_n}\}$ of reproducing kernels
is an orthogonal basis in ${K_{\Theta}}$.
In the general case, if $p_\alpha=0$, then the mapping
\begin{equation}
\label{clark}
   (C_\mu g)(z) = (\alpha-\Theta(z)) \int \frac{g(t)}{t-z}d\mu_\alpha(t)
\end{equation}
is a unitary map from $L^2(\mu)$ onto $K_\Theta$. By a theorem of A.
Poltoratski \cite{polt}, any function $f\in K_\Theta$ has
nontangential boundary values $\mu_\alpha$-a.e. for any $\alpha$.

Note also that the following are equivalent:

a) $p_\alpha >0$ for some $\alpha$;

b) $\alpha -\Theta \in H^2(\mathbb{R})$
(i.e., $\infty$ is a Carath\'eodory point for $\Theta$).

c) $\mu_\beta (\mathbb{R})< +\infty $ for some $\beta\in \mathbb{C}$,
$|\beta|=1$.

To see the equivalence of a) and c) note that
if $\mu_\beta (\mathbb{R})< +\infty$ and $p_\beta=0$, then
$p_{-\beta} >0$.
\smallskip

For the model spaces generated by meromorphic inner functions
we have the following immediate corollary of Theorem \ref{main2}
which applies to the so-called {\it tempered inner functions},
that is inner functions such that $|(\arg \Theta)'(t)| \lesssim
|t|^N$ for some $N>0$.

\begin{theorem}
\label{compl1}
Let $\Theta$ be a meromorphic inner function.
Write $\Theta = \exp(2i\phi)$ on $\mathbb{R}$,
where $\phi$ is a smooth increasing function on $\mathbb{R}$.
Let $\{t_n\} = \{t\in \mathbb{R}: \Theta(t) = -1\}$.
If $\alpha - \Theta \notin L^2(\mathbb{R})$ for
any $\alpha \in \mathbb{C}$, $|\alpha|=1$,
and for some $N>0$ and $C>0$,
$$
\phi'(t_n) \le C (|t_n|+1)^N,
$$
then any system biorthogonal to an exact system of reproducing kernels
is complete in $K_\Theta$.
\end{theorem}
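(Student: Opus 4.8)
The plan is to reduce the assertion to Theorem \ref{main2} by viewing $K_\Theta$ as (up to a unitary equivalence) a de Branges space $\mathcal H(E)$ in the class $\mathfrak{R}$. Since $\Theta$ is meromorphic, we may write $\Theta = E^*/E$ for a Hermite--Biehler function $E$, and the map $f\mapsto Ef$ is a unitary operator from $K_\Theta$ onto $\mathcal H(E)$ carrying reproducing kernels to reproducing kernels. Thus a system biorthogonal to an exact system of reproducing kernels in $K_\Theta$ is complete if and only if the corresponding system in $\mathcal H(E)$ is complete, so it suffices to check the hypotheses of Theorem \ref{main2} for $\mathcal H(E)$.

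First I would fix the orthogonal basis of reproducing kernels corresponding to the Clark measure $\mu_{-1}$, i.e.\ to the points $\{t_n\}=\{t:\Theta(t)=-1\}$; these are exactly the zeros of $A=\tfrac{E+E^*}{2}$, so $F=A$ is the generating function. By the computation in \eqref{he-bn}, the associated weights are $b_n = 1/(\pi\varphi'(t_n))$, where $\varphi$ is the phase of $E$. Since $\Theta=\exp(2i\phi)$ on $\mathbb R$ with $\phi$ a smooth branch of $\arg\Theta$, one has $\phi = -\varphi$ up to an additive constant (more precisely $\varphi' = \phi'$ since $\Theta = E^*/E = e^{-2i\varphi}$), hence $b_n = 1/(\pi\phi'(t_n))$. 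The hypothesis $\phi'(t_n)\le C(|t_n|+1)^N$ therefore translates directly into $\inf_n b_n(1+|t_n|)^{-N}\cdot\text{(const)}\le$ \dots; concretely $b_n \ge \tfrac{1}{\pi C}(1+|t_n|)^{-N}$, so $\inf_n b_n(1+|t_n|)^N>0$, which is precisely the first hypothesis of Theorem \ref{main2}.

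It remains to verify $\sum_n b_n = +\infty$, which by Theorem \ref{main2} then yields completeness of every biorthogonal system. Here I would use the hypothesis that $\alpha-\Theta\notin L^2(\mathbb R)$ for every unimodular $\alpha$, equivalently (by the equivalence a)$\Leftrightarrow$b) recalled before Theorem \ref{compl1}) that $p_\alpha = 0$ for all $\alpha$, equivalently $\mu_\alpha(\mathbb R) = +\infty$ for all $\alpha$. Applying this to $\alpha = -1$: the Clark measure $\mu_{-1}$ is purely atomic, $\mu_{-1} = \sum_n c_n\delta_{t_n}$, with $c_n$ proportional to $1/\phi'(t_n)$, i.e.\ $c_n \asymp b_n$ (this is the standard relation between Clark point masses and $1/\phi'$ at the points where $\Theta=-1$). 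Hence $\sum_n b_n \asymp \mu_{-1}(\mathbb R) = +\infty$, and Theorem \ref{main2} applies.

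The main obstacle I anticipate is bookkeeping the precise normalizations: matching the phase convention $\Theta=\exp(2i\phi)$ with the Hermite--Biehler convention $\Theta=E^*/E$ and with the de Branges decomposition $E=A-iB$, and confirming that the Clark masses of $\mu_{-1}$ are comparable to $1/\phi'(t_n) = \pi b_n$ rather than merely proportional up to an $n$-dependent factor. Both points are classical (see \cite{DB} and \cite{cl}), and \eqref{he-bn} already records the key identity $b_n = 1/(\pi\varphi'(t_n))$, so no new estimate is needed — only the identification $\varphi' = \phi'$ and $\mu_{-1}(\mathbb R) = \sum_n c_n$ with $c_n = 2/\phi'(t_n) \asymp b_n$, after which the divergence hypothesis closes the argument.
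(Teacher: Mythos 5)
Your proposal is correct and follows essentially the same route as the paper: reduce to $\mathcal H(E)$ via $f\mapsto Ef$, identify $b_n=(\pi\phi'(t_n))^{-1}$ through \eqref{he-bn}, note that $\sum_n b_n\delta_{t_n}$ is the Clark measure $\mu_{-1}$ so that the non-$L^2$ hypothesis forces $\sum_n b_n=+\infty$, and invoke Theorem \ref{main2}. The only cosmetic difference is that you spell out the verification of $\inf_n b_n(1+|t_n|)^N>0$, which the paper leaves implicit.
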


\begin{proof}
We may write $\Theta=E^*/E$ for a Hermite--Biehler function $E$.
Then the mapping $f \mapsto Ef$ is a unitary map of $K_\Theta$
onto $\mathcal{H}(E)$. The function
$A = \frac{E+E^*}{2} = \frac{E(1+\Theta)}{2}$
is the generating function for $t_n$,
and the functions $k_{t_n}(z) = \frac{E(t_n)}{\pi i}\cdot \frac{A(z)}{z-t_n}$
form an orthogonal basis of reproducing kernels in $\mathcal{H}(E)$
(see formula \eqref{he-repr}). By \eqref{he-bn},
$b_n = (\pi \phi'(t_n))^{-1}$. Also note that
$$
\sum_n b_n \delta_{t_n} =
\sum_n \frac{B(t_n)}{\pi A'(t_n)} \delta_{t_n}
$$
is the Clark measure $\mu_{-1}$ for $\Theta$.
Since $\alpha - \Theta \notin L^2(\mathbb{R})$ for
any $\alpha$ we conclude that $\sum_n b_n =+\infty$,
and the result follows from Theorem \ref{main2}.
\end{proof}

\begin{remark}
{\rm Theorem \ref{compl1} may be extended to the case when
the spectrum of $\Theta$ is a finite set
and $\phi'$ has at most power growth near each of
these points, that is, $\phi'(t) \lesssim |t-a|^{-N}$ near
$a\in \sigma(\Theta)$. However, it is not clear how to deal with
the case when $\sigma(\Theta)$ has nonempty interior or
when Clark measures have singular continuous parts. }
\end{remark}

The following theorem applies to general inner functions.
We now impose the restrictions on the generating function
$G$ in place of $\Theta$. Recall that $G\in (t+i) H^2$.
The following result shows that if $G$ is sufficiently regular,
that is, has at most power growth, then the biorthogonal
system is complete.

\begin{theorem}
\label{compl3}
Let $\Theta$ be an inner function
such that $1 - \Theta \notin L^2(\mathbb{R})$
and $\mu_{1}(\mathbb{R})=+\infty$ $($these conditions are fulfilled, e.g.,
if $\infty$ is not a Carath\'eodory point for $\Theta$$)$.
Let $G$ be the generating function of an exact system
of reproducing kernels $\{k_{\lambda_n}\}$,
$\lambda_n\in\mathbb{C}^+$. If for some $N>0$ and $C>0$
$$
|G(t)| \le C |t+i|^N \qquad \mu_1{-\rm a.e.},
$$
then the system $\frac{G(z)}{z-\lambda_n}$ is complete.
\end{theorem}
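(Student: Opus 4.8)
The plan is to adapt the argument used in the proof of Theorem \ref{main2}: we want to show that the space $\mathcal{S}$ of functions $S$ parametrizing the orthogonal complement to $\{G(z)/(z-\lambda_n)\}$ is trivial, and we will again use Lemma \ref{lem} (that $\mathcal{S}$ is closed under divided differences, so if it is finite-dimensional it consists of polynomials). Suppose the biorthogonal system is not complete and fix $h\in K_\Theta$, $h\ne 0$, orthogonal to all $G(z)/(z-\lambda_n)$. Using the Clark representation \eqref{clark} with $\alpha=1$, write $h = C_{\mu_1} g$ for some $g\in L^2(\mu_1)$, so that $h(z) = (1-\Theta(z))\int \frac{g(t)}{t-z}\,d\mu_1(t)$. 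The orthogonality condition should translate, by an argument parallel to the one producing \eqref{main_eq}, into the existence of a function $S$ (entire or at least Smirnov-type, to be analyzed) with
\[
\frac{G(z)S(z)}{1-\Theta(z)} = \int \frac{u(t)}{t-z}\,d\mu_1(t), \qquad S(t) \in L^2(|G|^2 d\mu_1),
\]
where $u$ is built from $g$ and the boundary values of $G$ on $\mathrm{supp}\,\mu_1$ (which exist $\mu_1$-a.e. by Poltoratski's theorem). Here it is essential that $G\in(t+i)H^2$ so that $G/(z-\lambda_0)\in K_\Theta$ and hence $\{|G(t_m)|/((1+|t_m|)\|k\|)\}$-type sums are finite, exactly as in Section 2.

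The next step is a growth estimate on $S$. First, by a factorization/Smirnov-class argument identical in spirit to the proof of Theorem \ref{exp-type} (using that $G/E$, or rather $G$ paired with $1-\Theta$, carries no singular inner factor because $\{\lambda_n\}$ is a uniqueness set for $K_\Theta$), one shows $S$ is of zero exponential type. Then I would estimate $|G(iy)|/|1-\Theta(iy)|$ from below on the imaginary axis. This is where the hypotheses $|G(t)|\le C|t+i|^N$ $\mu_1$-a.e., together with $1-\Theta\notin L^2(\mathbb{R})$ and $\mu_1(\mathbb{R})=+\infty$, come in: the condition $\mu_1(\mathbb{R})=+\infty$ forces $\mathrm{Re}\,\frac{1+\Theta(z)}{1-\Theta(z)}$ to grow faster than $\mathrm{Im}\,z$ cannot dominate, giving a lower bound like $|1-\Theta(iy)|\gtrsim$ something not too small, hence an upper bound $|1/(1-\Theta(iy))|$ that is at worst polynomial; combined with the polynomial bound on $|G|$ on the spectrum and the trivial bound from $\int \frac{u(t)}{t-iy}d\mu_1(t) = O(1)$, this yields $|S(iy)| \le C(1+|y|)^{N+1}$, say. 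A zero-exponential-type function with polynomial growth on a ray (plus the analogous bound on $\mathbb{C}^-$) is a polynomial, so $\mathcal{S}=\mathcal{P}_k$ for some finite $k$.

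To finish, I must rule out $\mathcal{S}=\mathcal{P}_k$ for every $k\ge 0$, in particular $\mathcal{S}\ne\{0\}$ leads to a contradiction because $\mathcal{S}$ would contain the constant $S\equiv 1$, and then the membership condition reads $\int |G(t)|^2\,d\mu_1(t)<+\infty$. But $1-\Theta\notin L^2(\mathbb{R})$ means precisely that $\int \frac{d\mu_1(t)}{1+t^2}$ behaves so that the constant function $1$ is not in $K_\Theta$ — more carefully, $1-\Theta\in K_\Theta$ would be equivalent to $\mu_1$ being finite, which is excluded — and since $|G|$ is comparable (up to the polynomial weight $|t+i|^N$) from below as well on $\mathrm{supp}\,\mu_1$... here is the delicate point: I only have an upper bound on $|G|$, so I should instead argue that $\int|G|^2 d\mu_1 = +\infty$ forces $S\equiv 1\notin\mathcal{S}$, and similarly no polynomial lies in $\mathcal{S}$, by showing $\int (1+|t|)^{2j}|G(t)|^2 d\mu_1(t)=+\infty$ for all $j$. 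Establishing this divergence from $1-\Theta\notin L^2$ and $\mu_1(\mathbb{R})=+\infty$ — rather than from a two-sided bound on $G$ — is the main obstacle, and it is the place where the precise relationship between $G\in(t+i)H^2$, the Clark measure, and the non-Carathéodory condition must be used carefully; I expect it follows because $G/(z-\lambda_0)\in K_\Theta$ gives $\int\frac{|G(t)|^2}{|t-\lambda_0|^2}d\mu_1(t)<\infty$ while a lower bound on $|G|$ on a suitable subset (forced by $G\not\equiv 0$ and analyticity across $\mathbb{R}\setminus\sigma(\Theta)$) combined with $\mu_1(\mathbb{R})=\infty$ prevents $\int|G|^2 d\mu_1$ from converging.
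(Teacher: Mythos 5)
Your opening is on the right track and coincides with the paper's: you pass to the Clark measure $\mu_1$, define $L(z)=(1-\Theta(z))\int\frac{G(t)\overline{h(t)}}{t-z}\,d\mu_1(t)$, factor $L=SG$, and use invariance of $\mathcal S$ under divided differences. But the two decisive steps are not the ones the paper uses, and as proposed they do not close. First, your reduction to ``$S$ is a polynomial'' rests on a lower bound for $|G(iy)|/|1-\Theta(iy)|$ on the imaginary axis. No such bound is available here: in Theorem \ref{moment} the estimate $|G(iy)|/|F(iy)|\gtrsim |y|^{-N-1}$ came from the polynomial-density (moment) hypothesis, which is absent in Theorem \ref{compl3}, and $G$ may well be very small on $i\mathbb{R}_+$. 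The paper avoids growth estimates entirely. If $\mathcal S$ is infinite dimensional, it picks $S\in\mathcal S$ with $N$ zeros $w_1,\dots,w_N\in\mathbb{C}^+$ and shows directly that $T=S/\prod_l(z-w_l)$ gives $GT\in K_\Theta$: the hypothesis $|G(t)|\le C|t+i|^N$ $\mu_1$-a.e.\ makes $\frac{G(t)}{\prod_l(t-w_l)}\cdot\overline{h(t)}\in L^2(\mu_1)$, so $GT$ has a representation of the form \eqref{clark}, contradicting completeness of $\{k_{\lambda_n}\}$. (This is where the polynomial bound on $G$ is actually used --- on $\supp\mu_1$, not on $i\mathbb{R}$.) If $\mathcal S$ is finite dimensional, invariance under $\mathcal D_w$ forces its elements to be rational, and poles on $\mathbb{R}$ and in $\mathbb{C}^-$ are excluded by Lemma \ref{lem5} ($G/(z-x)\notin L^2(\mathbb{R})$ and $\Theta\overline G$ outer); hence $S$ is a polynomial with no growth estimate needed.

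Second, and more seriously, your endgame uses a wrong membership condition for $\mathcal S$. The boundary relation coming from \eqref{param1} is $G(t)S(t)=G(t)\overline{h(t)}$ $\mu_1$-a.e.\ with $h\in L^2(\mu_1)$; so $S\equiv 1\in\mathcal S$ requires that the function equal to $1$ on $E_1=\{t:G(t)\ne 0\}$ extend to some $h\in L^2(\mu_1)$, i.e.\ $\mu_1(E_1)<\infty$ --- not $\int|S|^2|G|^2\,d\mu_1<\infty$. Proving $\int(1+|t|)^{2j}|G|^2\,d\mu_1=\infty$ would therefore not exclude polynomials from $\mathcal S$. Worse, $\mu_1(E_1)<\infty$ genuinely can occur, because $g=G/(z-\lambda_0)\in K_\Theta$ may vanish $\mu_1$-a.e.\ on a set of infinite measure; the lower bound on $|G|$ you hope to force ``by analyticity across $\mathbb{R}\setminus\sigma(\Theta)$'' is unavailable since $\mu_1$ is carried by $\sigma(\Theta)$. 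The paper's actual argument runs the other way: from $\mu_1(E_1)<\infty$ and $\mu_1(\mathbb{R})=\infty$ it gets $\mu_1(E_2)=\infty$, so the functions in $K_\Theta$ vanishing $\mu_1$-a.e.\ on $E_1$ form an infinite-dimensional subspace $X$ all of whose elements satisfy \eqref{orthog}; choosing $h_0\in X$ with $(t+i)^N h_0\in L^2(\mu_1)$, the corresponding $L_0$ belongs to $K_\Theta$ and vanishes on the uniqueness set $\{\lambda_n\}$, which yields the contradiction. You correctly flagged this step as the main obstacle, but the route you propose (divergence of moments of $|G|^2\,d\mu_1$) is not the right invariant and would not complete the proof.
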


\begin{lemma}
\label{lem5}
Let $G$ be the generating function of an exact system
of reproducing kernels $\{k_{\lambda_n}\}$, $\lambda_n\in\mathbb{C}^+$.
Then $\Theta \overline G$ is an outer function
and $\frac{G}{z-x} \notin L^2(\mathbb{R})$
for any $x\in \mathbb{R}$.
\end{lemma}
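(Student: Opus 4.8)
The plan is to deduce both statements from the criterion \eqref{crit} together with one structural consequence of exactness. Fix one of the points $\lambda_n$ and call it $\lambda_0$; then $\frac{G}{z-\lambda_0}\in K_\Theta$ and it does not vanish at $\lambda_0$ (it is, up to a nonzero constant, the element $f_0$ of $K_\Theta$ biorthogonal to $k_{\lambda_0}$). The key point is: if $\mathcal{L}$ denotes the closed span of $\{k_{\lambda_n}:n\neq 0\}$, then $\mathcal{L}+\mathbb{C}k_{\lambda_0}$ is the sum of a closed subspace and a one‑dimensional subspace, hence closed, and it equals $K_\Theta$ by completeness, while $k_{\lambda_0}\notin\mathcal{L}$ by minimality; hence $\dim\mathcal{L}^\perp=1$, and since $\frac{G}{z-\lambda_0}\in\mathcal{L}^\perp\setminus\{0\}$,
$$
\mathcal{L}^\perp=\bigl\{h\in K_\Theta:\ h(\lambda_n)=0\ \text{ for all }n\neq 0\bigr\}=\mathbb{C}\cdot\frac{G}{z-\lambda_0}.
$$

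To show that $\Theta\overline{G}$ is outer, I would put $u:=\Theta\,\overline{G/(z-\lambda_0)}$. By \eqref{crit}, $u\in H^2$; and $\Theta\overline{u}=\frac{G}{z-\lambda_0}\in H^2$, so $u\in K_\Theta$ and, moreover, $u$ and $\frac{G}{z-\lambda_0}$ are exchanged by the involution $h\mapsto\Theta\overline{h}$ of $K_\Theta$. Passing to boundary values, $\Theta\overline{G}=(z-\overline{\lambda_0})\,u$; since $z-\overline{\lambda_0}$ is outer, it suffices to prove that $u$ is outer. If $u$ were not outer, write $u=JO_u$ with $J$ its nonconstant inner factor and $O_u$ outer. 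Then $O_u\in H^2$ and $\Theta\overline{O_u}=J\cdot\Theta\overline{u}=J\cdot\frac{G}{z-\lambda_0}\in H^2$ (bounded analytic times $H^2$), so $O_u\in K_\Theta$ and hence $J\cdot\frac{G}{z-\lambda_0}=\Theta\overline{O_u}\in K_\Theta$. But $J\cdot\frac{G}{z-\lambda_0}$ vanishes at every $\lambda_n$ with $n\neq 0$, so it lies in $\mathcal{L}^\perp=\mathbb{C}\cdot\frac{G}{z-\lambda_0}$; thus $J$ reduces to a constant, contradicting the choice of $J$. Therefore $u$, and with it $\Theta\overline{G}$, is outer.

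For the second assertion I would argue by contradiction: if $\frac{G}{z-x}\in L^2(\mathbb{R})$ for some $x\in\mathbb{R}$, I claim $\frac{G}{z-x}\in K_\Theta$, which is impossible since $\frac{G}{z-x}$ would then be a nonzero element of $K_\Theta$ orthogonal to every $k_{\lambda_n}$ (each $\lambda_n\in\mathbb{C}^+$ differs from the real number $x$), contradicting completeness. To check \eqref{crit}: since $G\in(z+i)H^2$ we have $\frac{G}{z+i}\in H^2$, and $\frac{z+i}{z-x}$, being the reciprocal of the bounded outer function $\frac{z-x}{z+i}$, lies in the Smirnov class; hence $\frac{G}{z-x}=\frac{G}{z+i}\cdot\frac{z+i}{z-x}$ is a Smirnov‑class function, analytic in $\mathbb{C}^+$, with $L^2(\mathbb{R})$ boundary values, so $\frac{G}{z-x}\in H^2$. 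Likewise, by the first part $\frac{\Theta\overline{G}}{z+i}\in H^2$, so $\frac{\Theta\overline{G}}{z-x}$ is in the Smirnov class, and its boundary modulus equals $\frac{|G(t)|}{|t-x|}=\bigl|\frac{G(t)}{t-x}\bigr|\in L^2(\mathbb{R})$, whence $\frac{\Theta\overline{G}}{z-x}\in H^2$. As $\Theta\,\overline{G/(z-x)}=\frac{\Theta\overline{G}}{z-x}$ on $\mathbb{R}$, \eqref{crit} gives $\frac{G}{z-x}\in K_\Theta$, the desired contradiction.

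The only genuinely delicate step is the outerness of $u$ in the first part: this is exactly where exactness is used (through $\dim\mathcal{L}^\perp=1$) and where one needs the $\lambda_n$ to lie in $\mathbb{C}^+$, so that $z-\overline{\lambda_0}$ is outer and the auxiliary factors above stay in the Smirnov class. I would also note that $G$ may have zeros besides $\{\lambda_n\}$, but this causes no trouble, since only the vanishing of $\frac{G}{z-\lambda_0}$ at the $\lambda_n$ with $n\neq0$ and its non‑vanishing at $\lambda_0$ are used; granting the first part, the second is a routine Smirnov‑class manipulation.
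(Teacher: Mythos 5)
Your proof is correct, and its overall skeleton (fix $\lambda_0$, work with $g=G/(z-\lambda_0)\in K_\Theta$, factor $\Theta\overline g$, use exactness to kill the inner factor, and then verify \eqref{crit} for $G/(z-x)$) is the same as the paper's. The one genuine difference is the mechanism for ruling out a nontrivial inner factor $J$ of $u=\Theta\overline g$. The paper replaces the inner factor by a Frostman shift (as in Lemma \ref{lem1}) to turn it into a Blaschke product, so that multiplying $g$ by it produces a function of $K_\Theta$ with extra zeros beyond $\{\lambda_n\}_{n\ne 0}$, "contradicting completeness"; strictly speaking this needs the observation that $\{\lambda_n\}_{n\neq 0}\cup\{w\}$ is a uniqueness set, which again comes from minimality. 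You instead isolate the clean structural fact $\dim\mathcal{L}^\perp=1$ (closed subspace plus a line is closed, hence exactness forces codimension one) and use the involution $h\mapsto\Theta\overline h$ to place $J\cdot g$ inside $\mathcal{L}^\perp=\mathbb{C}g$, forcing $J$ to be constant. Your route avoids the Frostman shift entirely, handles singular inner factors and possible extra zeros of $G$ without any case distinction, and makes explicit exactly where exactness enters; the paper's route is shorter on the page because it reuses the Frostman-shift device already set up for Lemma \ref{lem1}. Your verification of \eqref{crit} for $G/(z-x)$ via the Smirnov maximum principle is also correct and merely fills in details the paper leaves implicit (note that for this step you only need $\Theta\overline G\in(z+i)H^2$, which follows from $\Theta\overline g\in K_\Theta$, not the full outerness statement).
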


\begin{proof}
Fix a zero $\lambda_0$ of $G$. Then we can write
$G=(z-\lambda_0) g$ where $g\in K_\Theta$ and $g$ vanishes
on $\{\lambda_n\}_{n\ne 0}$.
Then $\Theta \overline G = (t-\overline \lambda_0)
\Theta \overline g$ on $\mathbb{R}$. Since $g\in K_\Theta$
we have $\Theta \overline g \in K_\Theta$ and
we may write $\Theta \overline g = I O_m$.
As in the proof of Lemma \ref{lem1}, if $I$ is not a Blaschke product
we may replace it by its Frostman shift
$I_1 = \frac{I-\gamma}{1-\overline \gamma I}$, $|\gamma|<1$, which is
a Blaschke product. Then
$h = g (1-\overline \gamma I) I_1$ is in $K_\Theta$.
If $I \ne 1$, then $I_1$ is a nontrivial Blaschke product, and
so $h$ vanishes on
$\{\lambda_n\}_{n\ne 0}$ and on the zero set of $I_1$.
This contradicts the completeness of $\{k_{\lambda_n}\}$.

If for some $x\in \mathbb{R}$, $\frac{G}{z-x} \in L^2(\mathbb{R})$,
then by \eqref{crit} $\frac{G}{z-x}$ is in $K_\Theta$
and vanishes on $\{\lambda_n\}$.
\end{proof}

\begin{proof}  (of Theorem \ref{compl3}).
Let $\mu=\mu_1$. Since $1 - \Theta \notin L^2(\mathbb{R})$, we have
representation  \eqref{clark} with $\alpha=1$
for the elements of $K_\Theta$.
Let $h\in K_\Theta$ be orthogonal to all functions
$\frac{G(z)}{z-\lambda_n}$. Then, for any $n$,
\begin{equation}
\label{orthog}
  \Big \langle \frac{G}{z-\lambda_n}, h \Big\rangle_{H^2} =
  \Big \langle \frac{G}{z-\lambda_n}, h \Big\rangle_{L^2(\mu)} =
  \int \frac{G(t)\overline{h(t)}}{t-\lambda_n} d\mu(t)=0.
\end{equation}
Consider the function
\begin{equation}
\label{l}
L(z) = (1-\Theta(z)) \int \frac{G(t)\overline{h(t)}}{t-z} d\mu(t);
\end{equation}
it is analytic in $\mathbb{C}^+$ and vanishes at $\lambda_n$.
Hence we may write $L=SG$ for a function $S$
analytic  in $\mathbb{C}^+$,
\begin{equation}
\label{param1}
S(z)G(z) = (1-\Theta(z)) \int \frac{G(t)\overline{h(t)}}{t-z} d\mu(t).
\end{equation}
We denote by $\mathcal S$ the linear space of all functions $S$
for which \eqref{param1} holds with some $h\in L^2(\mu)$.

Note that $S$ has nontangential boundary values $\mu$-a.e.
and $G(t)S(t) = G(t)\overline{h(t)}$ $\mu$-a.e. Indeed,
$G$ is locally bounded, and so $G\overline h \chi_{(-r,r)} \in L^2(\mu)$
for any $r>0$ (by $\chi_E$ we denote the characteristic
function of a set $E$). Write
\begin{equation}
\label{boundval}
L(z) = (1-\Theta(z)) \int_{(-r,r)} \frac{G(t)\overline{h(t)}}  {t-z} d\mu(t)
+(1-\Theta(z)) \int_{\mathbb{R}\setminus (-r,r)}
\frac{G(t) \overline{h(t)}} {t-z} d\mu(t).
\end{equation}
Then the first term is in $K_\Theta$ and has nontangential
boundary values $G(x)\overline{h(x)}$ for $|x|<r$ $\mu$-a.e.
The second term is analytic for $|z|<r$ and tends to zero for a fixed $z$
when $r\to\infty$. Hence, $S(t)G(t) = G(t) \overline{h(t)}$
$\mu$-a.e.

By the arguments similar to Lemma \ref{lem} it is easy to show
that for any $w\in \mathbb{C}^+$ and $S\in \mathcal S$
we have $\frac{S(z)-S(w)}{z-w} \in \mathcal S$
(just replace the sum by the integral with respect to $\mu$).

Assume first that $S$ is infinite dimensional. Then,
as in the proof of Theorem \ref{main2}, there exists
$S\in\mathcal{S}$ with at least $N$ zeros $w_1,...,w_N
\in \mathbb{C}^+$ different from the points $\{\lambda_n\}$, and
$T(z):=\frac{S(z)}{\prod_{l=1}^{N}(z-w_l)}\in\mathcal{S}$.
For some $h\in L^2(\mu)$ we have
\[
G(z)S(z)  = (1-\Theta(z)) \int \frac{G(t) \overline{h(t)} } {t-z} d\mu(t),
\]
and so,
\[
G(z)T(z)  = (1-\Theta(z)) \int \frac{G(t)}{\prod_{l=1}^{N}(t-w_l)}\cdot
\frac{\overline{h(t)}} {t-z} d\mu(t).
\]
Since $|G(t)| \lesssim |t+i|^N$ $\mu$-a.e., we conclude that
$ \frac{G(t)}{\prod_{l=1}^{N}(t-w_l)}\cdot \overline{h(t)} \in L^2(\mu)$.
Thus, for the function $GT$ we have a representation of the form
\eqref{clark}, and so $GT \in K_\Theta$. This contradicts the completeness of
$\{k_{\lambda_n}\}$.

Now assume that $\mathcal S$ is finite dimensional.
Since the transform $(\mathcal{D}_w S) (z) = \frac{S(z)-S(w)}{z-w} $
preserves the class $\mathcal S$, the functions
$S, \mathcal{D}_w S, \mathcal{D}^2_w S, \dots, \mathcal{D}^N_w S$
are linearly dependent for large $N$. We
conclude that $\mathcal S$ consists of rational functions.
Write $S=P/Q$, where $P,Q$ are polynomials without common zeros.
Since $S$ is analytic in $\mathbb{C}^+$, $Q$ has no zeros in
$\mathbb{C}^+$. It follows from \eqref{boundval}
that $L$ is locally in $L^2$ on $\mathbb{R}$. If $S$ has a pole $x$
on $\mathbb{R}$ it follows that $\frac{G(z)}{z-x}$ is in
$L^2(x-\delta, x+\delta)$ which contradicts Lemma  \ref{lem5}.
Assume, finally, that $Q$ has a zero in $\mathbb{C}^-$.
We have
$$
\Theta(z)\, \overline{G(\overline z)}\,
\overline{S(\overline z)} =
(\Theta(z) -1) \int \frac{\overline{G(t)}h(t) } {t-z} d\mu(t).
$$
The right-hand side is analytic in $\mathbb{C^+}$ whereas, by Lemma \ref{lem5},
$\Theta(z) \overline{G(\overline z)}$ is an outer function
in $\mathbb{C^+}$. Hence $\overline{S(\overline z)}$ is analytic in $\mathbb{C^+}$
and, thus, $S$ has no poles in $\mathbb{C}^-$. We conclude that $S$
is a polynomial.

Without loss of generality we may assume that $S\equiv 1$.
To complete the proof recall that
$S(t)G(t) = G(t) \overline{h(t)}$ $\mu$-a.e.
Put $E_1:=\{t: G(t) \ne 0\}$, $E_2:= \mathbb{R} \setminus E_1$.
We have $h(t)=1 $ $\mu$-a.e. on $E_1$. Since $h\in L^2(\mu)$, but
$\mu(\mathbb{R}) = +\infty$, we conclude that $\mu(E_1) <+\infty$
and $\mu(E_2) =+\infty$.
Then the functions $h\in K_\Theta$
such that $h=0$ $\mu$-a.e. on $E_1$ form an infinite dimensional subspace $X$ of
$K_\Theta$. For any $h\in X$ we have $Gh= 0$ $\mu$-a.e.
and hence \eqref{orthog} holds.
Since $X$ is infinite dimensional and contains $\frac{h(z)}{z-\lambda}$
whenever $\lambda \in \mathbb{C}^+$, $h(\lambda)=0$,
there exists a nonzero function $h_0\in X$ such that $(t+i)^N h_0 \in L^2(\mu)$.
Then for $L_0$ defined by \eqref{l} with $h_0$ in place of $h$ we have
$$
L_0(z) = (1-\Theta(z)) \int \frac{G(t)\overline{h_0(t)}}{t-z} d\mu(t)=
(1-\Theta(z)) \int \frac{G(t)}{(t+i)^N} \frac{(t+i)^N \overline{h_0(t)}}
{t-z} d\mu(t).
$$
The function $L_0$ vanishes at $\lambda_n$ and belongs
to $K_\Theta$ since it has a representation of
the form \eqref{clark}. Hence, $L_0 \equiv 0$ and $h_0 =0$ $\mu$-a.e.
This contradiction proves the theorem.
\end{proof}

\begin{remark}
{\rm If $G \in L^\infty(\mathbb{R})$, then the conditions $1 -
\Theta \notin L^2(\mathbb{R})$ and $\mu_{1}(\mathbb{R})=+\infty$ may
be omitted. Indeed, $p_\alpha =0$ for all unimodular $\alpha$ except
at most one, and the functions in $K_\Theta$ admit representation
\eqref{clark}. Then any function $L$ defined by \eqref{l} is in
$K_\Theta$ (note that $G\overline h \in L^2(\mu)$) and vanishes at
$\lambda_n$. Hence, $h\equiv 0$ and we conclude that the
biorthogonal system is complete. }
\end{remark}

\end{document}